\documentclass[11pt]{amsart}
\usepackage[utf8]{inputenc}
\usepackage[english]{babel}

\usepackage[pdftex,svgnames,dvipsnames]{xcolor}
\usepackage[a4paper,top=4cm,bottom=3cm,left=4cm,right=4cm]{geometry}
\usepackage{amsmath,amsthm,amssymb,amsfonts}
\usepackage{mathrsfs, mathtools}
\usepackage{thmtools}
\usepackage{import}
\usepackage{enumitem}
\usepackage{xifthen}
\usepackage{pdfpages}
\usepackage{transparent}
\usepackage{float}
\usepackage{comment}
\usepackage{subcaption}
\usepackage{hyperref}
\usepackage{tikz-cd}
\usepackage{stackrel}
\usepackage{wrapfig}
\usepackage[misc]{ifsym}
\usepackage[pagewise]{lineno}
\usepackage{bm}
\usepackage{cleveref}

\graphicspath{{figures/}}

\newcommand{\bbR}{\mathbb{R}}

\newcommand{\bbZ}{\mathbb{Z}}
\newcommand{\bbN}{\mathbb{N}}
\newcommand{\bbT}{\mathbb{T}}

    \newcommand{\graph}{\operatorname{graph}}

    \newcommand{\loc}{\text{loc}}
    \newcommand{\floc}{f_{\text{loc}}}

    \newcommand{\subof}{\subset}
    \newcommand{\ti}{\times}
    \newcommand{\sans}{\setminus}

\newcommand{\Es}{E^s}

\newcommand{\Eu}{E^u}
\newcommand{\Ecu}{E^{cu}}
\newcommand{\Ecs}{E^{cs}}

\newcommand{\hypW}{W_{\operatorname{hyp}}}
\newcommand{\hyp}[1]{W_{#1}}

\newcommand{\Wu}{W^u}

\newcommand{\inv}{^{-1}}

    \newcommand{\invn}{^{-n}}

    \newcommand{\del}{\partial}

    \newcommand{\cube}{R}

    \newcommand{\Bone}{\mathcal{B}}
    \newcommand{\Cone}{\mathcal{C}}
    \newcommand{\Fcal}{\mathcal{F}}
    \newcommand{\Ucal}{\mathcal{U}}

    \newcommand{\fcovers}{\xRightarrow{f}}
    \newcommand{\fweakcovers}{\xrightarrow{f}}

    \newcommand{\Lam}{\Lambda}

    \newcommand{\gam}{\gamma}
    \newcommand{\Sig}{\Sigma}
    \newcommand{\sig}{\sigma}

    \newcommand{\al}{\alpha}
    
    \newcommand{\bt}{\beta}

\newcommand\restr[2]{{ 
  \left.\kern-\nulldelimiterspace 
  #1 
  \vphantom{\big|} 
  \right|_{#2} 
  }}

\numberwithin{equation}{section}
\theoremstyle{plain}

\newtheorem{theorem}{Theorem}[section]
\newtheorem{maintheorem}{Theorem}

\newtheorem{proposition}[theorem]{Proposition}
\newtheorem{lemma}[theorem]{Lemma}
\newtheorem{corollary}[theorem]{Corollary}

\theoremstyle{definition}

\newtheorem{definition}[theorem]{Definition}
\newtheorem*{notation}{Notation}

\newtheorem{remark}{Remark}

\title[A computer-assisted proof of robust transitivity]{A computer-assisted proof of robust transitivity}

\subjclass{Primary: 37B20, 37C20; Secondary: 37D30, 37C29, 37M22.}
\keywords{Robust transitivity, computer-assisted proof, blender, partial hyperbolicity}

\author{Marisa Cantarino}
\address{Centro de Modelamiento Matemático (CNRS IRL2807)\\Universidad de Chile, Santiago, Chile}
\email{\Letter \; marisa.cantarino@cmm.uchile.cl}

\author{Andy Hammerlindl}
\address{School of Mathematics, Monash University\\ Clayton, VIC 3800, Australia}
\email{andy.hammerlindl@monash.edu}

\author{Warwick Tucker}
\address{School of Mathematics, Monash University\\ Clayton, VIC 3800, Australia}
\email{warwick.tucker@monash.edu}

\thanks{This research was partially supported by the Australian Research Council through grant DP220100492. M. C. was partially financed the Centro de Modelamiento Matemático (CMM) BASAL fund FB210005 for center of excellence from ANID-Chile. We thank the mathematical research institute MATRIX in Australia where the authors started helpful conversations with Maciej Capiński, Lorenzo Díaz, Bernd Krauskopf, Natalia McAlister and Hinke Osinga. The authors also thank Sylvain Crovisier and Rafael Potrie for discussing related results.}

\begin{document}

\begin{abstract}
    We present computer-assisted proofs of partial hyperbolicity, existence of a blender and robust transitivity for diffeomorphisms on closed manifolds. These proofs are implemented for a family of derived-from-Anosov systems on the 3-torus.
\end{abstract}

\maketitle


\section{Introduction}

The goal of this paper is to provide computer-assisted proofs of topological and dynamical properties of differentiable dynamical systems in manifolds that can be used to establish robust transitivity. In particular, we have pure mathematical statements implying partial hyperbolicity, existence of a transitive uniformly hyperbolic set, existence of a blender, and denseness of stable and unstable manifolds. These results are tailored to be implemented using interval arithmetic.

A computer-assisted proof is an argument with formal mathematical validity which is partially performed by a computer. Symbolic manipulation of algebraic formulas or graph properties obtained by algorithms are simple examples, but computers can also be used to check inequalities formally, in particular with the use of intervals.

For differentiable dynamical systems, there is a great interest in knowing if a property is present in any meaningful way on a topological space of $C^1$ functions. This could mean, for instance, that this property is \textit{generic}, meaning that it is present for a typical system under this topology. Or we may look for \textit{robust} properties, those that, once present for a particular system satisfying some conditions, are also present for nearby systems. The set of maps having some robust property forms an open set, so this means this property should be taken into account when looking for typical dynamics for the topology.

Robust properties are also of particular interest in applications, as we may often approximate the function under consideration with a nearby one, which can be obtained experimentally or could be a numerical representation of the original function's behavior. 

One way we can ensure robustness of topological properties is through \textit{structural stability}, which means that, given a function, all sufficiently $C^1$-close functions have the same topological dynamics, or more precisely, they are topologically conjugate. This turns out to be a quite restricted scenario, since structural stability is equivalent to hyperbolicity and a strong transversality condition, as conjectured by Palis and Smale \cite{Palis-Smale} and whose proof was finished by Ma\~n\'e \cite{Mane88}, following the contributions of other authors \cite{robbin71, melo73, robinson73, robinson76}.

One topological property related to chaos is \textit{transitivity}, which for differentiable dynamical systems on manifolds means that there are dense orbits for the system. For surfaces, the only \textit{robustly transitive} sets are the hyperbolic ones \cite{mane82}. For a few decades, the only known examples of systems presenting transitivity in a robust way were uniformly hyperbolic. Then, in the 1970s, both Shub \cite{shub71} and Ma\~n\'e \cite{mane1978} introduced examples of systems that were robustly transitive but not uniformly hyperbolic. These are examples of what we nowadays call \textit{partially hyperbolic} diffeomorphisms. Another more general type of system that is robustly transitive was introduced through \textit{blenders} \cite{bonatti-diaz}, which are associated with invariant, hyperbolic, and transitive subsets that exhibit a specific topological behavior robustly (see \Cref{def:blender}).

Conditions for robust transitivity have been explored extensively in the literature. For diffeomorphisms, a necessary condition is the presence of a \textit{dominated splitting} \cite{DPU1999, BDP2003}. More surprisingly, the diffeomorphism also has to be volume hyperbolic \cite{BDP2003}, which implies positive topological entropy \cite{Catsigeras-Tian}. Thus, since partially hyperbolic diffeomorphisms represent an important class of diffeomorphisms with dominated splitting, it is natural to explore conditions for robust transitivity for partially hyperbolic maps \cite{BV2000, PS2006, LP2013, CO2021, RHUY2022, PIN2025}.

We apply our proof of robust transitivity to a specific parametrized family of partially hyperbolic maps described below. More specifically, we prove that $f$ is partially hyperbolic and has a transitive hyperbolic subset, and that this set is associated with a blender. We prove that this blender is related in a particular way to a pair of fixed points $p_0$ and $q_0$ with unstable indices $1$ and $2$, and that the two-dimensional stable and unstable manifolds of $p_0$ and $q_0$, respectively, are dense. All these conditions imply that $f$ is $C^1$ robustly transitive, see \Cref{prop:rt-bdv}.

In a recent paper \cite{CKOZ-2025}, the authors developed a computer-assisted proof of the presence of blenders for diffeomorphisms in $\bbR^n$, and applied it to a three-dimensional H\'enon family of maps. Their characterization of the existence of blenders, given in \cite[Theorem 2]{CKOZ-2025}, is very general and does not use any geometrical information of the system \textit{a priori} for the theoretical result. On the other hand, when it comes to implementation, it is necessary to define explicitly a finite family of boxes that satisfy its hypotheses.

In comparison, our implementation of the existence of a blender (\Cref{sec:part2}) is somewhat tailored for the case where the unstable direction does not change much globally, since it uses curves aligned with the unstable cones and written as graphs on the $x$-coordinate. However, our result is closer to the definition, using open sets of curves. This makes the proof more straightforward. For the implementation, we instead have these open sets of curves defined automatically for a certain region, making it easier to change the parameters within a family without changing the code. A more refined version of this strategy appears in \cite{HMT2026}. We remark that the previous results in \cite{CKOZ-2025, HMT2026} are for systems in $\bbR^n$, while our results are stated for general manifolds, and implemented for the closed manifold $\bbT^3$.

The method to find a transitive hyperbolic subset associated with a blender is of particular interest. For this, we use symbolic dynamics and graph algorithms (see \Cref{sec:part1}).

Partial hyperbolicity is a statement about every point in the manifold, and its proof has the longest computation time of our method, as it checks cone inequalities at the tangent level for a cover of the manifold using small boxes (see \Cref{sec:part3}). Our strategy to prove the denseness of the two-dimensional stable/unstable manifolds for the fixed points also reflects the power of computer-assisted methods to prove properties for every point in a manifold, as we prove that every local unstable or stable leaf intersects a given two-dimensional invariant set (see \Cref{sec:part4}).

\subsection{Setting and statements}

To demonstrate our techniques for establishing robust transitivity in a setting that is partially hyperbolic but not Anosov, we look at a specific family of diffeomorphisms on the 3-torus.

Consider the family of matrices of the form
$$A_k = \left(\begin{matrix} 
    k & -1 & -1\\
    1 & 1 & 0\\
    1 & 0 & 0
\end{matrix}\right).$$

This family of matrices was studied by \cite{PT2014}, in which they proved that, for $k \geq 4$, $A_k$ defines an Anosov diffeomorphism with splitting $E^s \oplus E^u \oplus E^{uu}$, i.e., the unstable direction has dimension two and admits a further splitting into two one-dimensional directions, the first one being less expanding under $A_k$ than the second.

Ponce and Tahzibi applied to the action of $A_k^{-1}$ an abstract local perturbation (a Baraviera--Bonatti \cite{BB2003} perturbation), in order to create a partially hyperbolic map homotopic to $A_k^{-1}$, but with a different sign for the center Lyapunov exponent. Our goal is also to create partially hyperbolic maps homotopic to $A_k$ (or with inverse homotopic to $A_k^{-1}$), but using a global and explicit modification. With that, we explore the existence of blenders and robust transitivity for the new map with the validation of computer-assisted proofs.

The non-linear partially hyperbolic system is given by the composition of one map in the following family.

$$\psi_b(x,y,z) = (x, y - b \sin(2 \pi z), z).$$

We define
$$f_{k,b}(x,y,z) = \psi_b \circ A_k(x, y, z) = (kx -y - z, x + y - b \sin (2 \pi x), x).$$

We show in \Cref{sec:implementation} that the diffeomorphism $f_{k,b}$ is not Anosov so long as $b > \dfrac{1}{2\pi}$. For our analysis, we choose $b$ in a small interval centered at $1$, and then choose an integer $k$ large enough so that the system is partially hyperbolic. 

\begin{maintheorem}
\label{teoA}
For $k = 16$ and $b \in [0.9995, 1.0005]$, $f_{k,b}$ is partially hyperbolic and robustly transitive. Additionally, it has minimal strong foliations and heterodimensional cycles robustly.
\end{maintheorem}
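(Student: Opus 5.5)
The plan is to derive the theorem from the abstract criterion \Cref{prop:rt-bdv}. Its hypotheses will be verified one at a time by interval-arithmetic computations for the whole parameter interval $b\in[0.9995,1.0005]$ at once, with $k=16$.

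\textbf{Step 1: partial hyperbolicity.} We will show that $f_{16,b}$ is partially hyperbolic with splitting $E^s\oplus E^c\oplus E^u$, with one-dimensional bundles. We cover $\bbT^3$ by a finite family of small boxes. On each box we use interval enclosures of $Df_{16,b}=D\psi_b\cdot A_{16}$; only the entry $-2\pi b\cos(2\pi x)$ varies. We check that explicit constant cone fields are strictly invariant: the unstable cone $\Cone^u$ and the centre-unstable cone $\Cone^{cu}$ are mapped into themselves by $Df$, and the stable and centre-stable cones are mapped into themselves by $Df^{-1}$. We also check uniform expansion and contraction rates inside these cones. Because the derivative depends only on $x$, the covering can in principle be one-dimensional. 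Even so, I expect this to be the most expensive computation. The same cone estimates show that partial hyperbolicity is $C^1$-open. They also give the transversality we need later.

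\textbf{Step 2: the blender and the fixed points.} We locate fixed points $p_0$ and $q_0$ by an interval Newton/Krawczyk argument, with unstable indices $1$ and $2$. This is possible because the centre exponent changes sign across the torus once $b>1/(2\pi)$. Next we build a finite family of boxes and a transition graph whose strongly connected component contains $p_0$ or a periodic point homoclinically related to it. A covering-relations argument then gives a transitive hyperbolic set $\Lambda$ (see \Cref{sec:part1}). For the blender we fix an open set $\mathcal D$ of curves tangent to $\Cone^{u}$, written as graphs over $x$ inside a region $R$. We then check the covering property: every curve in $\mathcal D$ has an image under $f$ that contains a curve in $\mathcal D$ lying in one of the subregions. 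This forces every such curve to meet $W^s_{loc}(\Lambda)$, robustly. We further check that $W^u(q_0)$ contains a curve in $\mathcal D$, and that $W^s(p_0)$ meets the unstable manifold of $\Lambda$ transversally. This gives a robust heterodimensional cycle between $p_0$ and $q_0$.

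\textbf{Step 3: density of the two-dimensional invariant manifolds.} The goal is to show that $W^s(p_0)$ and $W^u(q_0)$ are dense. The strategy is to prove that every local strong unstable leaf of a fixed length $L$ meets a fixed piece of $W^s(p_0)$, and symmetrically for strong stable leaves and $W^u(q_0)$. Since $f$ expands strong unstable leaves, every strong unstable leaf is eventually long, so this density claim reduces to a finite check over all $u$-curves of length $L$. We carry this out with a box cover of $\bbT^3$ and an enclosure of a two-dimensional disc in $W^s(p_0)$ (see \Cref{sec:part4}). The same statement, combined with the density of periodic points in the strong leaves, gives minimality of the strong foliations. It also supplies the remaining hypothesis of \Cref{prop:rt-bdv}, which yields $C^1$ robust transitivity.

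The main obstacle is Step 2: choosing the regions and the curve family $\mathcal D$ so that the covering property holds with margins that survive over the whole $b$-interval. I would first try a two-rectangle design, with boxes aligned to the eigendirections of $A_{16}$ near $p_0$, and refine them adaptively if the check fails.
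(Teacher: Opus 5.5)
Your overall plan matches the paper's: verify the five hypotheses of \Cref{prop:rt-bdv} by cone and covering computations. However, Step 2 puts the two fixed points in the wrong roles, so hypotheses (3) and (4) of \Cref{prop:rt-bdv} are not obtained as written.

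Your family $\mathcal D$ consists of curves tangent to $\mathcal{C}^u$ that are carried forward by $f$. That is a cu-blender with one-dimensional plaques, and by \Cref{def:blender} it needs $\dim E^u|_\Lambda = 2$. So $\Lambda$ must be homoclinically related to $q_0$ (unstable index $2$); it cannot be built around $p_0$. A transitive hyperbolic set containing $p_0$ has two-dimensional stable leaves, and a curve hitting those is ordinary transversality, not a blender phenomenon.

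Symmetrically, the point that must activate the blender is $p_0$, whose one-dimensional unstable manifold is a strong unstable leaf. Checking that the two-dimensional $W^u(q_0)$ contains a curve of $\mathcal D$ only gives $W^u(q_0)\cap W^s(\Lambda)\neq\varnothing$. That is either automatic (if $q_0\in\Lambda$) or a dimensionally generic intersection. It never produces the robust non-transverse connection $W^u(p_0)\cap W^s(\Lambda_g)\neq\varnothing$ (dimensions $1+1<3$). This is the half of the heterodimensional cycle that actually needs the blender, and it is what drives robust transitivity.

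In the paper this is \Cref{lemma:activate}: a box around $p_0$ that strongly covers itself, followed by a chain of weak coverings whose last box maps u-curves onto curves of $\mathcal F$. Separately, $q_0\in\Lambda$ holds because the tiny self-covering box at $q_0$ lies in the blender region $V$.

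There is a second point you need to make explicit. The covering property of $\mathcal D$ only yields a point on each curve whose forward orbit stays in the blender region; it does not by itself ``force every such curve to meet $W^s_{\mathrm{loc}}(\Lambda)$''. To place that point in $W^s(\Lambda_g)$ you need hypothesis (3) of \Cref{prop:blender}: the maximal invariant set of the region must be contained in $\Lambda_g$ for all $g$ near $f$. Taking one strongly connected component of a transition graph does not give this, since other components would contribute other basic sets. The paper builds the symbolic dynamics on all seeds of $V=V_x\times S^1\times V_z$ and checks that the whole graph is strongly connected (\Cref{prop:lam}). It then uses the fact that the $x$-coordinate becomes the $z$-coordinate to identify the maximal invariant set of $K=V_x\times S^1\times S^1$ with that of $V$.

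Your Step 3 is the correct hypothesis (5). Checking u-curves against an enclosed disc of $W^s(p_0)$ is a workable alternative to the paper's chains of weak coverings into a self-covering box at $p_0$ (with good seeds). The cost is that you need a rigorous enclosure of a large piece of the invariant manifold, which the paper's forward-iteration scheme avoids.
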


In the above theorem, the strong foliations are the stable and unstable one-dimensional foliations of the partially hyperbolic splitting $E^s \oplus E^c \oplus E^u$. It is likely that the system is robustly transitive for many other choices of parameters, but as our main goal is to demonstrate the algorithm for establishing this property, we have chosen for simplicity and clarity to restrict our choice of parameters to those above.

The proof of Theorem A follows from Proposition \ref{prop:rt-bdv} --- which gives conditions for a partially hyperbolic diffeomorphism with a blender to be robustly transitive --- and \Cref{cor:consequences}. The main contributions of this work are: to state and prove results about pure mathematical objects that imply the hypotheses of \ref{prop:rt-bdv}; and to implement computational objects and algorithms that imply the desired pure mathematical objects and their properties. For the general statements, see \Cref{sec:main-statements}.

Partial hyperbolicity is verified numerically as part of the overall
algorithm.
For this system and choice of parameters,
it is possible to write a pen-and-paper proof that $f_{k,b}$
is partially hyperbolic.
However, the partially hyperbolic splitting must be computed and stored
by the computer in order to verify that the foliations are
minimal and that the overall system is robustly transitive.
Therefore, we only give a computer-aided proof of partial hyperbolicity.

We chose to use this specific family of systems on the 3-torus
as it is a tractable example of how the techniques and algorithms may be
applied. The diffeomorphism is defined as a composition of a linear map and a
shear, both of which are volume preserving.
There is a large existing body of pure mathematical results about partially
hyperbolic systems in dimension three, and even more results in the special cases where
the system is volume preserving and/or defined on the 3-torus.
In our algorithm, we do not make use of the fact that $\dim(M) = 3$ or that
the system is volume preserving, and the theory and computational techniques introduced
here are applicable to partially hyperbolic systems in any dimension.

A non-rigorous numerical study of the density of invariant manifolds for partially hyperbolic diffeomorphisms on $\bbT^3$ is given in \cite{GMK2019}.
That said, we now briefly discuss results related to Theorem A in the pure
mathematics literature. In \cite[Corollary D]{CP2026}, the authors prove that the minimality of the strong foliations is $C^1$-open and dense in the space of derived-from-Anosov diffeomorphisms in $\bbT^3$. Even with this genericity result, for the family of diffeomorphisms used in our implementation, we do not have the minimality of strong foliations \textit{a priori} (this is an announced result in \cite{ACEPWZ}: if $f: \bbT^3 \to \bbT^3$ is a volume-preserving $C^{1+\alpha}$ derived-from-Anosov diffeomorphism, then both foliations $W^u$ and $W^s$ are minimal), but the minimality of the strong foliations is a consequence of our results, which do not use volume preservation.

Since we define this family with a global deformation, we cannot use arguments similar to those originally appearing in \cite{mane1978}. And since we have mixed behavior in the center direction, we cannot use the Pujals--Sambarino \cite{PS2006} SH (\textit{some hyperbolicity}) property \textit{a priori}. The SH property is also a consequence: in \Cref{cor:consequences} we prove that every unstable segment has an iterate entering the robust covering collection (see \Cref{def:rcc}) defining the blender. This implies the SH property as defined in \cite[\S 5.1]{CP2026}.

In \cite[Theorem F]{yang21}, robust transitivity is guaranteed for partially hyperbolic diffeomorphisms that are volume preserving and whose center direction is one-dimensional and with non-zero Lyapunov exponents. We do not verify for which parameters in the family the system would have non-zero center Lyapunov exponents, since this is not necessary for our method.

Thus, even with the main strength of \Cref{teoA} being the method employed and not the conclusion itself, we remark that its conclusion is not a direct consequence of previous results.

\subsection{Structure of the paper}

\Cref{sec:preliminary} is dedicated to introducing notions in the theory of partially hyperbolic systems, and to stating \Cref{prop:rt-bdv} and its consequences. In \Cref{sec:main-statements} we define particular charts, cones, and submanifolds and certain properties related to how they behave under iteration of the dynamics. These objects are designed to bridge the gap between \Cref{prop:rt-bdv} and the computer-assisted strategy, using dynamical systems language.

In \Cref{sec:hset} we state, cite, or prove results used to guarantee that the algorithm we implement implies the hypotheses of the results in \Cref{sec:main-statements}. The implementation to verify the results from \Cref{sec:main-statements} for a particular system is given in \Cref{sec:implementation} in four parts: proving, for $f_{k,b}$, that a subset of $\bbT^3$ has a non-empty hyperbolic set $\Lambda$ with $\dim E^u = 2$ and that $\Lambda$ is transitive (\Cref{sec:part1}); proving that there is a family of curves $\mathcal{F}$ such that $(\Lambda, \mathcal{F})$ is a blender (see \Cref{def:blender}) and proving that $W^u(p_0)$ \textit{activates} the blender (\Cref{sec:part2}); proving partial hyperbolicity (\Cref{sec:part3}); proving that there is $r$ such that all strong unstable manifolds of points in $\bbT^3$ with length $r$ intersect $W^s(p_0)$ and all strong stable manifolds of points in $\bbT^3$ with length $r$ intersect $W^u(q_0)$ (\Cref{sec:part4}).

\section{Preliminary concepts}
\label{sec:preliminary}

We introduce in this section some notions and results regarding robust transitivity. We start with the concept of partial hyperbolicity, which is close to a more general necessary condition for robust transitivity for diffeomorphisms (the presence of a \textit{dominated splitting}). We then introduce the notion of a blender, which, as mentioned in the introduction, is one of the mechanisms to prove robust transitivity. To finish, we mention a criterion for robust transitivity in the partial hyperbolic context using blenders.

Throughout this work, $M$ is a closed (compact, connected, finite dimensional and without boundary) Riemannian manifold and $f: M \to M$ is a $C^1$ diffeomorphism.

\subsection{Partial hyperbolicity}
\label{sec:ph}

We say that a diffeomorphism $f: M \to M$ is \textit{partially hyperbolic} if there is a $Df$-invariant splitting of the tangent bundle $TM = E^s \oplus E^c \oplus E^u$ such that
$$\Vert Df_pv^s \Vert < \Vert Df_pv^c \Vert < \Vert Df_pv^u \Vert \quad \text{and} \quad\Vert Df_pv^s \Vert < 1 < \Vert Df_pv^u \Vert$$
for all $p \in M$ and all unit vectors $v^{\sigma} \in E^{\sigma}_p$.

A diffeomorphism is \textit{Anosov} or \textit{uniformly hyperbolic} if $E^c$ is trivial, meaning that we have an invariant splitting of the tangent space into a direction of uniform expansion and a direction of uniform contraction. Given an open set $U \subseteq M$, we say that a compact $f$-invariant $\Lambda \subseteq U$ is a \textit{uniformly hyperbolic set} if we have such splitting for points in $\Lambda$.

Remember that a cone $\Cone$ in $\bbR^n$ is a set of vectors satisfying $v \in \Cone$ implies $a v \in \Cone$ for all $a \in \bbR$. In this work we consider, in particular, two kinds of cones, see \Cref{sec:cones} for their definitions.

We consider a \textit{cone family} in $M$ as $\Cone = \{\Cone(p)\}_{p \in M} \subseteq TM$ with each $\Cone(p) \subseteq T_pM$ being a cone. We say that $\Cone$ is \textit{$Df$-invariant} if $Df(p)(\Cone(p)) \subseteq \textrm{Int } \Cone(f(p))\cup \{0\}$. An equivalent definition of partial hyperbolicity is given by invariant contracting and expanding cones as follows.

\begin{definition}
    \label{def:ph}
    We say that a diffeomorphism $f: M \to M$ is \textit{partially hyperbolic} if there are two cone families $\Cone^u$ and $\Cone^s$ in $M$ satisfying:
    \begin{enumerate}
        \item $\Cone^u$ is $Df$-invariant;
        \item $\Cone^{s}$ is $Df^{-1}$-invariant;
        \item $\Vert Df_pv^u \Vert > 1 \mbox{ for $p \in M$ and all unit vectors } v^u \in\Cone^u(p);$
        \item $\Vert Df^{-1}_pv^s \Vert > 1 \mbox{ for $p \in M$ and all unit vectors } v^s \in\Cone^s(p).$
    \end{enumerate}
\end{definition}

If (1) and (3) hold, then $f$ has the splitting $E \oplus E^{u}$, with $E^u$ being uniformly expanded. If (2) and (4) hold, then $f$ has the splitting $E^s \oplus F$, with $E^s$ being uniformly contracted. Thus, if all conditions are satisfied, $f$ is partially hyperbolic.

We can express this definition of partial hyperbolicity using cones in local coordinates, creating the following criterion.

\begin{proposition}
    \label{prop:1ph-proof}
    Consider an atlas $\{ (B_i, \; \varphi_i: V_i \subseteq \bbR^n \to B_i) \}_{i \in \{0, \; \cdots, \;m-1\}}$ for the n-dimensional manifold $M$ and a diffeomorphism $f: M \to M$.
    Consider, for each $i$, cone fields $\Cone^u_i$ and $\Cone^s_i$ over $\varphi_i^{-1}(B_i)$. Suppose that, for every $i, j \in \{0, \; \cdots, \;m-1\}$ such that $f(B_i) \cap B_j \neq \varnothing$, the following conditions hold:
    \begin{enumerate}
        \item \emph{u-invariance}: $Df_{ij}(p) (\Cone^u_i(p)) \subseteq \textrm{Int } \Cone^u_j(f(p))\cup \{0\}$;
        \item \emph{s-invariance}: $Df_{ij}^{-1}(p) (\Cone^s_j(p)) \subseteq \textrm{Int } \Cone^s_i(f^{-1}(p)) \cup \{0\}$;
        \item \emph{u-expansion}: $\Vert Df_{ij}(p) \cdot v \Vert > \Vert v \Vert$ for all $v \in \Cone^u_i(p)$;
        \item \emph{s-contraction}: $\Vert Df_{ij}^{-1}(p) \cdot v \Vert > \Vert v \Vert$ for all $v \in \Cone^s_j(p)$,
    \end{enumerate}
    where $p \in \varphi_i^{-1}(B_i \cap f^{-1}(B_j))$ and $f_{ij} = \varphi_j^{-1} \circ f \circ \varphi_i$ is $f$ in local coordinates. Then $f$ is partially hyperbolic.
\end{proposition}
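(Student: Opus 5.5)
The plan is to reduce \Cref{prop:1ph-proof} to \Cref{def:ph}, or more precisely to the standard cone criterion behind it. The local cone fields $\Cone^u_i,\Cone^s_i$ will be glued into global cone families on $M$, and the strict local inequalities will be turned into uniform exponential estimates for $Df^n$ and $Df^{-n}$.

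\textbf{Gluing.} The first step is to choose a chart for each point. Set
$$B_i' = B_i \sans \bigcup_{l<i} B_l .$$
This gives a partition of $M$, and $i(p)$ denotes the index with $p \in B_{i(p)}'$. We then define
$$\Cone^u(p) = D\varphi_{i(p)}\bigl(\Cone^u_{i(p)}(\varphi_{i(p)}^{-1}(p))\bigr),$$
and $\Cone^s(p)$ analogously. Let $p\in B'_i$ with $f(p)\in B'_j$; then $f(B_i)\cap B_j\neq\varnothing$, so conditions (1)--(4) apply to the pair $(i,j)$. Since $Df$ in local coordinates is $Df_{ij}$, condition (1) gives directly
$$Df(p)\Cone^u(p)\subseteq \textrm{Int }\Cone^u(f(p))\cup\{0\}.$$
Condition (2) gives the corresponding statement for $\Cone^s$ under $Df^{-1}$. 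Thus items (1) and (2) of \Cref{def:ph} hold, and no continuity of the cone families is needed for this step.

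\textbf{Norms.} Items (3) and (4) of \Cref{def:ph} are stated in the Riemannian norm, whereas (3) and (4) here use the Euclidean norm of the chart. To bridge this, we introduce the (possibly discontinuous) norm
$$|v|_p=\Vert D\varphi_{i(p)}^{-1}v\Vert .$$
Then hypothesis (3) reads $|Df(p)v|_{f(p)}>|v|_p$ for $v\in\Cone^u(p)$, and similarly (4) for $\Cone^s$ under $Df^{-1}$.

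We will assume, as is implicit in a computer-assisted setting, that the charts extend to neighbourhoods of compact closures and that the cone fields are closed cones depending continuously (or at least upper semicontinuously) on the point. Under this assumption, compactness and finiteness of the atlas give two uniform facts:
\begin{itemize}
\item a constant $K\ge1$ with $K^{-1}\Vert v\Vert\le |v|_p\le K\Vert v\Vert$ for all $v\in T_pM$;
\item a uniform factor $\lambda>1$ with $|Df(p)v|\ge\lambda|v|$ on $\Cone^u$, and likewise for $Df^{-1}$ on $\Cone^s$.
\end{itemize}
For the second fact, we minimise the continuous ratio over the compact set of unit vectors in each closed cone over each closed piece $\overline{\varphi_i^{-1}(B_i\cap f^{-1}(B_j))}$. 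Iterating and using the invariance above, we then get
$$\Vert Df^n v\Vert\ge K^{-2}\lambda^n\Vert v\Vert\quad\text{for } v\in\Cone^u(p),$$
and the analogous estimate for $Df^{-n}$ on $\Cone^s$.

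\textbf{Splitting.} With these estimates, we run the standard cone argument. We set
$$E^u_p=\bigcap_{n\ge0}Df^n\,\Cone^u(f^{-n}p),$$
show it is a $Df$-invariant subspace of dimension $\dim$ of the maximal subspaces in the cones, and take the complementary invariant bundle $E^{cs}_p=\bigcap_n Df^{-n}(\overline{T M\sans\Cone^u})$. Domination and uniform expansion of $E^u$ follow from the estimate above, and symmetrically we obtain $E^s\oplus E^{cu}$. Intersecting, $E^c=E^{cs}\cap E^{cu}$, so the two dominated splittings combine into $E^s\oplus E^c\oplus E^u$. Replacing the metric by an adapted one (Gourmelon's adapted metrics) removes the constant $K^{-2}$, giving the one-step inequalities of the partial hyperbolicity definition.

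\textbf{Main obstacle.} The delicate point is that the glued cones and norms are discontinuous across chart boundaries. The cone-criterion argument must therefore be run with only measurable-in-$p$ but uniformly bounded data. The key is that every estimate is uniform: the constants $K$ and $\lambda$, together with a uniform angle gap between $Df\,\Cone^u(p)$ and $\partial\Cone^u(f(p))$, again obtained from compactness and the strict inclusion in (1). This uniformity is what makes the nested intersections converge to continuous subspaces. I would verify the angle-gap uniformity first, since it is where the compactness assumptions on the atlas are genuinely used.
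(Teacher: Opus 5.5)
Your proposal is correct, and it follows the paper's strategy: glue the local cone fields into global cone families on $M$ and appeal to the cone criterion of \Cref{def:ph}. The gluing is different, though, and you carry out much more of the argument.

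The paper sets $\Cone^u(q)=\bigcup_{\{i:\,q=\varphi_i(q_i)\}}D\varphi_i(q_i)\Cone^u_i(q_i)$, the union over \emph{all} charts containing $q$. Strict invariance of this union is immediate. Condition (1) holds for every admissible pair $(i,j)$, so $Df$ sends each source chart's cone into the interior of every target chart's cone. The construction is canonical (no ordering of the atlas), and for closed boxes with continuous closed local cones it gives a field with closed graph.

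Your selection via $B_i'$ gives a genuinely discontinuous field. What it buys is a well-defined norm $|v|_p$ in which (3) becomes a one-step expansion along orbits. With the union, a vector may lie in several chart cones, so an iterated estimate still needs a consistent choice of chart along the orbit.

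More importantly, the paper's one-line proof silently identifies the chart Euclidean norms in (3)--(4) with the Riemannian norm of \Cref{def:ph}. Your constants $K$ and $\lambda$, the estimate $\Vert Df^n v\Vert\ge K^{-2}\lambda^n\Vert v\Vert$, and the passage to an adapted metric are exactly what closes that gap. The regularity you assume is implicit in the paper as well, since its boxes are images of $[-1,1]^n$ carrying constant cones.

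Two small corrections. First, domination of $E^{cs}\oplus E^u$ does not follow from the expansion estimate. It comes from the uniform strict invariance (your angle gap), as in the standard cone-field criterion. Second, to obtain $\lambda>1$ you need the strict inequalities to hold on the compact pieces themselves. This is automatic when the $B_i$ are closed boxes. For an open atlas, however, taking closures of $\varphi_i^{-1}(B_i\cap f^{-1}(B_j))$ leaves the region where (3)--(4) are assumed, so you should first shrink to a compact cover.
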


\begin{proof}
    Define $\Cone^u(q) := \bigcup_{\{i \; : \; q = \varphi_i(q_i)\}} D\varphi_i(q_i) (\Cone^u_i(q_i))$ as an unstable cone for $q \in M$, and define analogously the stable cones. Then these cones satisfy the conditions for partial hyperbolicity.
\end{proof}

For partially hyperbolic maps, using graph transform methods as in \cite{HPS}, we can prove that there are continuous families of foliations $W^s$ and $W^u$ tangent respectively to the bundles $E^s$ and $E^u$.

We say that a map $f: \bbT^n \to \bbT^n$ is \textit{derived-from-Anosov} if it is homotopic to an Anosov map. The Anosov map can be given by a linear Anosov map, induced by $A$ a $n \times n$ matrix with integer entries, $\vert \!\det A \vert = 1$ and with eigenvalues outside the complex unit circle. The linear toral map $A$ induces the same action as $f$ on $\bbZ^n = \pi^1(\bbT^n)$, and it is called the \textit{linearization} of $f$.

Historically, these were one of the first few cases of partially hyperbolic maps to be studied, and their relationship with a linear Anosov map allowed, for instance, the proof of robust transitivity for some examples inside this class \cite{mane1978,shub71}.

The family of examples that we apply our results to belongs to this class, but this is not a requirement for our methods.

\subsection{Blender}
\label{sec:blender}

A \textit{k-dimensional plaque}, in the context of this work, means a $C^1$ embedded image in $M$ of the unit disk in $\bbR^k$. We introduce the notion of a \textit{robust covering collection} for an n-dimensional manifold $M$ as a family of k-dimensional plaques that present a form of robust forward-invariance. More precisely, we define it as follows.

\begin{definition}
    \label{def:rcc}
    We say that a set $\Fcal$ of $C^1$ k-dimensional plaques in $M$, $1 \leq k < n$, is a \emph{robust covering collection} for $f$ if there is a $C^1$-neighborhood $\mathcal{U}$ of $f$ satisfying: for every $g \in \mathcal{U}$ and every $\al \in \Fcal,$ there exists a subset $\bt \subof \al$ such that $g(\bt) \in \Fcal.$ We say that $\dim(\Fcal) = k$.
\end{definition}

Note that the presence of a robust covering collection for the diffeomorphism $f$ is a robust property. Additionally, $\Fcal$ can be considered to be $C^1$-open. Indeed, if $\mathcal{U}' \subseteq \mathcal{U}$ is $C^1$-open and $\mathcal{H}$ is a small enough neighborhood of $\operatorname{Id}$ such that $h \inv \circ g \circ h \in \mathcal{U}$ for all $h \in \mathcal{H}$ and $g \in \mathcal{U}'$, then the set
$$\Fcal' = \{h \circ \al \, : \, h \in \mathcal{H} \quad \text{and} \quad \al \in \Fcal \},$$
is $C^1$-open and satisfies the definition of a robust covering collection.

There are several related notions of blender in the literature. In general, the blender is considered together with a hyperbolic fixed point, or a uniformly hyperbolic set that may also be required to be transitive. This additional structure implies properties that are commonly associated with a blender.

\begin{definition}
    \label{def:blender}
    We say that the pair $(\Lambda, \mathcal{F})$ is a \textit{blender} (or \textit{cu-blender}) if $\Lambda$ is a transitive invariant set that is uniformly hyperbolic with $\dim \restr{E^u}{\Lambda} > k$, and there are a $C^1$-neighborhood $\mathcal{U}$ of $f$ and a set $\mathcal{F}$ of k-dimensional plaques such that
    $$\gamma \cap W^s(\Lambda_g) \neq \varnothing,$$
    for every $\gamma \in \mathcal{F}$, and every $g \in \mathcal{U}$, where $\Lambda_g$ is the hyperbolic set of $g$ that is a continuation of $\Lambda$.
    
    An index-k hyperbolic periodic point for $f$ is said to \textit{activate} the blender if its unstable manifold contains a sub-plaque belonging to $\mathcal{F}$.
\end{definition}

We recall the concept of uniform hyperbolicity in \Cref{sec:ph}. We say that $f: M \to M$ is \emph{topologically transitive} (or simply \textit{transitive}) if, for all $U, V \subseteq M$ non-empty open sets, there is $k \in \bbN$ such that $f^k(U) \cap V \neq \varnothing$. If $M$ is a closed manifold, transitivity is equivalent to the existence of a dense orbit. We call an invariant subset $\Lam = f(\Lam) \subseteq M$ transitive if the restriction $\restr{f}{\Lam}$ is transitive.

As with \Cref{def:rcc}, the presence of a blender is a robust property in the $C^1$ topology.

In particular, considering the set $V = \bigcup_{\al \in \Fcal} \al$, if $\Lam = \bigcap_{n \in \bbZ} f^n(V)$ is a uniformly hyperbolic set for $f$ with $\dim \restr{E^u}{\Lambda} > k$ and $\Fcal$ is a k-dimensional robust covering collection for $f$, then $(\Lambda, \mathcal{F})$ is a blender. The proof of this fact is in \cite[Proposition 2.5]{HMT2026} and it is also contained within the proof of \Cref{prop:blender}.

With \Cref{def:blender} we can recover the general meaning of blender as

\begin{quote}
    \textit{``(...)a blender is a hyperbolic basic set such that convenient projections of its stable, or unstable, set have larger topological dimension than the stable, or unstable, set itself.''} \cite{BDV2005}
\end{quote}

Consider, for simplicity, the case that $M$ is a 3-manifold, $k = 1$ and $\dim \restr{E^u}{\Lambda} = 2$. Intuitively,  since $\mathcal{F}$ is $C^1$-open, then its intersection with a transversal plane is an open subset of this plane. This means that the projection of $W^s(\Lambda)$ along the curves in $\mathcal{F}$ ``fills'' an open set of this plane, which resembles the defining property of a blender introduced in \cite{HKOS2018} as the \textit{carpet property}. 

\subsection{Robust transitivity}
\label{sec:rt}

We are interested in the persistence of transitivity under perturbation.

\begin{definition}
    \label{def:rt}
    Given a Riemannian manifold $M$, we say that a diffeomorphism $f: M \to M$ is ($C^1$-)\textit{robustly transitive} if there is a neighborhood $\mathcal{U} \supset f$ in $\operatorname{Diff^1(M)}$ with the $C^1$ topology such that, for all $g \in \mathcal{U}$, $g$ is transitive.
\end{definition}

For uniformly hyperbolic diffeomorphisms, robust transitivity is a consequence of their structural stability, meaning that all topological properties are persistent on a neighborhood of $f$. Other mechanisms used to prove robust transitivity frequently borrow topological properties from Anosov maps. For instance in \cite{mane1978}, the author consider derived-from-Anosov maps (see \Cref{sec:ph}) coming from local perturbations of Anosov maps, and in \cite{PS2006} the authors prove the stronger property of robust minimality of stable/unstable foliations (meaning that all leaves are dense and this occurs robustly), something that is also inspired by the topological behavior of the Anosov case.

With the introduction of blenders (see \Cref{sec:blender}), more general examples of robustly transitive systems were constructed. The strategy in \cite{bonatti-diaz} to prove robust transitivity using blenders is to ``connect'' the blender with the invariant manifolds of two different hyperbolic fixed/periodic points. Using this ``connection'', we can show that one of these hyperbolic periodic points have both its stable and unstable manifold being dense, which implies transitivity, and this property is robust. This result is \cite[Proposition 7.4]{BDV2005}, and we present it here as \Cref{prop:rt-bdv}.

In the following, we use $W^{s/c/u}$ to refer to the invariant foliations given by the partially hyperbolic splitting. An \textit{unstable disk} is an embedded disk that lies inside a leaf of the partially hyperbolic unstable foliation. An \textit{unstable disk of radius $r > 0$} consists of all points $x$ with $d_u(x_0,x) \leq r$, where $x_0$ is the center of the disk and $d_u$ is distance measured along $W^u(x_0)$. A \textit{stable disk} (of radius $r$) is defined analogously. We use $\hypW^{s/u}(p)$ to refer to the stable/unstable set of a hyperbolic periodic point. If $Df(p)$ restricted to $E^c(p)$ is not contracting, then $\hypW^{s}(p) = W^{s}(p)$. In general, it only holds that $\hypW^{s}(p) \subsetneq W^{cs}(p)$. The same holds for unstable leaves.

\begin{proposition}
    \label{prop:rt-bdv}
    Consider $M$ an n-dimensional closed manifold and $f: M \to M$ a diffeomorphism. If
    \begin{enumerate}
    \item $f$ is partially hyperbolic with splitting $TM = E^s \oplus E^c \oplus E^u$, where the subbundles have dimensions $s, c, u \geq 1$, respectively;
    \item there are $p_0, q_0 \in M$ hyperbolic periodic points with $\dim \hypW^u(p_0) = u + c_1$ and $\dim \hypW^u(q_0) = u+c_2$, where $0 \leq c_1 < c_2 \leq c$;
    \item $q_0$ is homoclinically related to a blender $(\Lambda, \mathcal{F})$, where $\Lambda$ has unstable index $u+c_2$ and the plaques in $\mathcal{F}$ have dimension $u+c_1$;
    \item $\hypW^u(p_0) = W^u(p_0)$ activates the blender $\mathcal{F}$, meaning that $W^u(p_0)$ has a subset belonging to $\mathcal{F}$;
    \item there is a constant $r > 0$ such that any unstable disk of radius larger than $r$ intersects $\hypW^s(p_0)$ and any stable disk of radius larger than $r$ intersects $\hypW^u(q_0)$.
    \end{enumerate}
    Then $f$ is robustly transitive.
\end{proposition}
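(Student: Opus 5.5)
The plan is to show that, for every $g$ in a small $C^1$-neighborhood $\Ucal$ of $f$, the continuations $p_g, q_g$ satisfy
$$\overline{\hypW^s(p_g)} = M \quad\text{and}\quad \overline{\hypW^u(q_g)} = M,$$
and that $\hypW^u(p_g)$ accumulates on $\hypW^u(q_g)$. From these facts, transitivity of $g$ follows by a standard $\lambda$-lemma argument. We first fix $\Ucal$ so that several robust properties hold simultaneously for every $g \in \Ucal$:
\begin{itemize}
\item $g$ is partially hyperbolic, with cone fields as in \Cref{def:ph} persisting.
\item $p_g, q_g$ and the continuation $\Lambda_g$ exist, and $q_g$ stays homoclinically related to $\Lambda_g$ (transverse intersections persist).
\item $\Fcal$ is still a robust covering collection, or at least the blender property of \Cref{def:blender} holds for $g$.
\item $\hypW^u(p_g)$ still contains a plaque of $\Fcal$; this holds because compact pieces of unstable manifolds vary continuously and $\Fcal$ can be taken $C^1$-open.
\end{itemize}

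\textbf{Robustness of hypothesis (5).} Every unstable disk of radius $r$ for $f$ meets $\hypW^s(p_0)$. Since $\dim W^u + \dim \hypW^s(p_0) = u + (n-u-c_1) \geq n$ need not be complementary, I will instead use that the intersection can be chosen to occur with a compact piece $D \subof \hypW^s(p_0)$, by compactness of the space of unstable disks of radius $r$. I will arrange that the intersection is robust, for instance by enlarging $r$ to $2r$. Alternatively, I can argue via the center-stable structure: $\hypW^s(p_0) \supseteq W^s(p_0)$ has dimension at least $s$, and I work with the local $cs$-disk $\hypW^s_{loc}(p_0)$, which is transverse to $E^u$. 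Then every $u$-disk of radius $r$ crosses the compact disk $D_R = \hypW^s_R(p_0)$ topologically, and such crossings (a $u$-disk through a $cs$-disk in a foliation chart) persist under $C^0$-perturbations of the disks. Since strong unstable disks of $g$ are $C^0$-close to those of $f$, every $g$-unstable disk of radius $r+1$ meets $\hypW^s(p_g)$; the same argument applies to stable disks and $\hypW^u(q_g)$.

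\textbf{Density.} Given any open $U$, it contains an unstable disk, and forward iterates of that disk grow to radius greater than $r$. Hence $g^n(U) \cap \hypW^s(p_g) \neq \varnothing$, and by invariance $U \cap \hypW^s(p_g) \neq \varnothing$. So $\hypW^s(p_g)$ is dense, and similarly $\hypW^u(q_g)$ is dense. Next, the plaque $\al \in \Fcal$ in $\hypW^u(p_g)$ meets $W^s(\Lambda_g)$ by the blender property. Because $q_g$ is homoclinically related to $\Lambda_g$ (transitive hyperbolic, so its periodic points are dense and related), the $\lambda$-lemma gives $\hypW^u(q_g) \subof \overline{\hypW^u(p_g)}$. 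Therefore $\hypW^u(p_g)$ is dense too.

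\textbf{Transitivity.} Take open sets $U, V$. The density of $\hypW^s(p_g)$ gives a point of $U$ whose forward orbit converges to the orbit of $p_g$. Since $\hypW^u(p_g)$ meets $V$, the $\lambda$-lemma applied to a small disk in $U$ transverse to $\hypW^s(p_g)$, of dimension $\dim \hypW^u(p_g)$, shows that its iterates accumulate on $\hypW^u(p_g)$ and so meet $V$. Hence $g^n(U) \cap V \neq \varnothing$.

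The main obstacle is the robustness of (5), since the intersections need not be transverse. I would first try the topological-crossing argument with a large compact $cs$-piece of $\hypW^s(p_0)$, using continuity of the unstable foliation in $g$.
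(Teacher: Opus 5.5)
The paper does not prove this proposition itself; it quotes it from \cite[Proposition 7.4]{BDV2005}. Measured against the standard argument, your outline has the right shape, but the step that makes $\hypW^u(p_g)$ dense fails. You claim that since the activating plaque $\al\subof\hypW^u(p_g)$ meets $W^s(\Lambda_g)$, the $\lambda$-lemma gives $\hypW^u(q_g)\subof\overline{\hypW^u(p_g)}$. The inclination lemma needs a disk of dimension $u+c_2=\dim\hypW^u(\Lambda_g)$ crossing $W^s(\Lambda_g)$ transversally. Here $\dim\al=u+c_1<u+c_2$, so $\dim\al+\dim W^s(\Lambda_g)=n-(c_2-c_1)<n$. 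The intersection supplied by the blender is therefore never transverse, and the $\lambda$-lemma only says that iterates of $\al$ approach $(u+c_1)$-dimensional pieces of unstable manifolds of $\Lambda_g$, not the full $(u+c_2)$-dimensional ones. For example, in the standard three-dimensional model, suppose $\al$ meets the stable manifold of the fixed point of one branch, and that point's center coordinate lies outside the overlap of the branches. Then the parts of $g^n(\al)$ remaining in the blender region converge only to that point's strong unstable curve. When $c_1=0$, density of $\hypW^u(p_g)$ is density of a single strong unstable leaf. That is a minimality statement nothing in your argument provides, and your final transitivity step at $p_g$ rests on it.

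The blender is designed to give the opposite inclusion, $\hypW^s(p_g)\subof\overline{\hypW^s(q_g)}$. Given $w\in\hypW^s(p_g)$ and a neighbourhood $N$ of $w$, take a $(u+c_1)$-disk $D\subof N$ through $w$ transverse to $\hypW^s(p_g)$. Now the dimensions are complementary, so the $\lambda$-lemma makes some $g^n(D)$ $C^1$-close to $\al$, and hence $g^n(D)$ contains a plaque of the $C^1$-open family $\Fcal$. That plaque meets $W^s(y)$ for some $y\in\Lambda_g$. Moreover $W^s(\Lambda_g)\subof\overline{\hypW^s(q_g)}$ by the homoclinic relation and transitivity of $\Lambda_g$. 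Since $g^n(N)$ is open, $N$ meets $\hypW^s(q_g)$. As $\hypW^s(p_g)$ is dense, so is $\hypW^s(q_g)$. Combining this with the density of $\hypW^u(q_g)$ from (5) and $\dim\hypW^s(q_g)+\dim\hypW^u(q_g)=n$, your last paragraph goes through with $q_g$ in place of $p_g$.

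Separately, your dimension count for the robustness of (5) is reversed: $u+(n-u-c_1)=n-c_1\le n$. The crossing argument needs complementary dimensions, i.e. $c_1=0$ and $c_2=c$, which is the case of the paper's application. In that case $\hypW^s(p_0)$ is tangent to the $cs$-cones, so every intersection with an unstable disk is transverse. Openness and compactness then give a uniform compact piece of $\hypW^s(p_0)$, and these transverse intersections persist for $g$ near $f$. For $c_1>0$ or $c_2<c$ the intersections in (5) cannot be transverse, and your argument does not make them robust.
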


We recall that, in the item (3) above, $q_0$ being \textit{homoclinically related} to the hyperbolic set $\Lambda$ means that they have the same unstable index and there is $p \in \Lambda$ a periodic point such that
\begin{align*}
    \hypW^{s}(q_0) &\pitchfork \hypW^{u}(p) \neq \varnothing \\
    \hypW^{s}(p) &\pitchfork \hypW^{u}(q_0) \neq \varnothing.
\end{align*}

In the setting of our implementation, we have $M = \bbT^3$, $c_1 = 0$ and $c_2 = u = s = 1$.

An immediate consequence of \Cref{prop:rt-bdv} is the following result.

\begin{corollary}
    \label{cor:consequences}
    A diffeomorphism $f: M \to M$ satisfying the five conditions from \Cref{prop:rt-bdv} also satisfies:
    \begin{itemize}
        \item $f$ has a \textit{heterodimensional cycle};
        \item the strong foliations of $f$ are minimal;
        \item every unstable disk has an iterate which contains a plaque in $\Fcal.$
    \end{itemize}
    Additionally, these properties are robust.
\end{corollary}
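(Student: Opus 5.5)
The plan is to first note that all five hypotheses of \Cref{prop:rt-bdv} are $C^1$-open. Partial hyperbolicity is open. Hyperbolic periodic points and the blender have continuations, and the blender is robust by \Cref{def:blender}. Homoclinic relations are transverse intersections of compact pieces of invariant manifolds, so they persist. Activation persists because $\Fcal$ may be taken $C^1$-open, as remarked after \Cref{def:rcc}, and local unstable manifolds vary continuously. Item (5) persists for a slightly larger $r$, since a compact piece of $\hypW^s(p_0)$ transverse to the unstable foliation varies continuously. Hence it suffices to prove the three properties for $f$ itself; robustness then follows by applying the same argument to every $g$ in the neighborhood.

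\emph{Heterodimensional cycle.} I would show $\hypW^u(p_0)\cap \hypW^s(q_0)\neq\varnothing$ and $\hypW^u(q_0)\cap\hypW^s(p_0)\neq\varnothing$.
\begin{itemize}
\item For the first, take the plaque $\gamma\in\Fcal$ contained in $W^u(p_0)$ by (4). The blender gives $\gamma\cap W^s(\Lambda)\neq\varnothing$, so $W^u(p_0)$ meets $W^s(x)$ for some $x\in\Lambda$. Since $q_0$ is homoclinically related to a periodic point of the transitive hyperbolic set $\Lambda$, the $\lambda$-lemma shows that $\hypW^s(q_0)$ accumulates, in the $C^1$ sense, on $W^s(\Lambda)$ near the relevant compact pieces. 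The plaques of $\Fcal$ form an open family, and the blender property holds for $C^1$-small perturbations. Together these give a point of $\gamma$ (or of a nearby plaque still inside $W^u(p_0)$) lying on $\hypW^s(q_0)$ itself. A cleaner alternative is to use that $\Fcal$ is open: small $C^1$ perturbations of $\gamma$ inside a larger piece of $W^u(p_0)$ all meet $W^s(\Lambda)$, and stable manifolds of points of $\Lambda$ are approximated by $\hypW^s(q_0)$.
\item For the second, take a stable disk of radius larger than $r$ inside $\hypW^s(p_0)$, which is possible since the strong stable leaf is contained in it. By (5) it intersects $\hypW^u(q_0)$.
\end{itemize}
Since $\dim\hypW^u(p_0)\ne\dim\hypW^u(q_0)$, this is a heterodimensional cycle.

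\emph{Minimality of strong foliations.} Take any unstable leaf $L$. It contains disks of radius $>r$, so by (5) it meets $\hypW^s(p_0)$ at some point $z$. By the inclination lemma, forward iterates of a small unstable disk around $z$ accumulate on $W^u(p_0)$, which is dense by the conclusion of \Cref{prop:rt-bdv}. The leaves of the strong unstable foliation are invariant as a family, but the leaf $L$ itself is not. Using $f^n(L)$ instead would only show that iterates of $L$ are dense, so I would reverse the argument. The disk $D\subset L$ around $z$ crosses $\hypW^s(p_0)$ transversally. Points of $W^u(p_0)$ are dense, and for any open set $U$ there is a compact piece of $W^u(p_0)$ meeting $U$. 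By continuity of the unstable foliation, the leaf $L$, which passes arbitrarily close to $p_0$ along... this direct route is messy. A cleaner route uses density of $\hypW^s(p_0)$, which is also obtained in the proof of \Cref{prop:rt-bdv}. Given an open set $U$, choose a point of $\hypW^s(p_0)$ whose local unstable disk lies in $U$. Then apply transitivity-type arguments via (5) and backward iteration.

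I expect this step to be the main obstacle: converting density of $\hypW^{s/u}(p_0)$ into density of every individual strong leaf. The first thing I would try is to show that $L$ meets $\hypW^s(p_0)$ and then iterate $L$ backward toward $p_0$, using density of $W^u(p_0)$ together with continuity of the foliation.

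\emph{Unstable disks entering $\Fcal$.} Given an unstable disk $D$, (5) shows that $D$, or an iterate once its radius exceeds $r$ (unstable disks grow under forward iteration), meets $\hypW^s(p_0)$ transversally. By the $\lambda$-lemma, the forward iterates $f^n(D)$ $C^1$-accumulate on compact pieces of $W^u(p_0)$, in particular on the plaque $\gamma\in\Fcal$. Since $\Fcal$ is $C^1$-open, some $f^n(D)$ contains a plaque of $\Fcal$.
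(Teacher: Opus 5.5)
Your third bullet is correct, and it is more direct than the paper's argument. Once an iterate of the disk has radius $>r$, it crosses $\hypW^s(p_0)$ transversally by (5), and the $\lambda$-lemma at $p_0$ together with $C^1$-openness of $\Fcal$ finishes. Your robustness preliminaries are also fine. The first two bullets, however, have genuine gaps.

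\emph{Heterodimensional cycle.} You aim for a cycle between the periodic points $p_0$ and $q_0$, which requires $\hypW^u(p_0)\cap\hypW^s(q_0)\neq\varnothing$. This is not implied by the hypotheses, and it is never robust. Indeed, $\dim\hypW^u(p_0)+\dim\hypW^s(q_0)=n-(c_2-c_1)<n$; in the implementation these are two curves in $\bbT^3$. By Kupka--Smale, this intersection is empty for $C^1$-generic $g$ near $f$, which also breaks your claim that robustness follows by rerunning the argument for each $g$. Your reasoning only yields points of $\gam$ arbitrarily \emph{close} to $\hypW^s(q_0)$. The blender guarantees $\gam\cap W^s(\Lambda)\neq\varnothing$, but the single immersed curve $\hypW^s(q_0)$ merely accumulates on $W^s(\Lambda)$. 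Openness of $\Fcal$ does not help either, because the nearby members of $\Fcal$ are not contained in $W^u(p_0)$. Your ``nearby plaques still inside $W^u(p_0)$'' are just other arcs of the same curve.

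The fix, which is the paper's argument, is to close the cycle with the transitive hyperbolic set $\Lambda$, whose index differs from that of $p_0$. The connection $\hypW^u(p_0)\cap W^s(\Lambda)\neq\varnothing$ is immediate from (4) and \Cref{def:blender}. The connection $W^u(\Lambda)\cap\hypW^s(p_0)\neq\varnothing$ follows from (5) as in your second bullet, because unstable manifolds of points of $\Lambda$ contain unstable disks of every radius. Both connections are robust.

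\emph{Minimality.} Here you give no argument. The one ingredient you invoke, density of $W^u(p_0)$ ``by the conclusion of \Cref{prop:rt-bdv}'', is unavailable: that proposition only yields robust transitivity, and density of the one-dimensional leaf $W^u(p_0)$ is a special case of the claim. The missing idea is the reversal you gestured at: iterate the open set rather than the leaf, and use that $f^N(L)$ is again a leaf. The paper does this for the stable foliation, as follows.
\begin{enumerate}
\item By (5), $\hypW^u(q_0)$ is dense, since a small stable disk in any open set grows under $f^{-1}$ to radius $>r$.
\item By the $\lambda$-lemma at $q_0$, a forward iterate of an open set $U$ contains a disk $D$ such that $f^n(D)$ is $C^1$-close to $W^u_{\mathrm{loc}}(q_0)$.
\item Hence $f^n(D)$ is close to a fixed compact piece of $\hypW^u(q_0)$. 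By (5) and compactness, every stable disk of radius $r$ crosses this piece, robustly and uniformly.
\item For a stable leaf $L$ and large $N$, $f^N(L)$ contains such a disk, so $f^N(L)\cap f^N(U)\neq\varnothing$, i.e.\ $L\cap U\neq\varnothing$.
\end{enumerate}
Starting the $\lambda$-lemma in step 2 requires some forward iterate of $U$ to meet $\hypW^s(q_0)$. You can get this from your third bullet, the blender property, and the accumulation of $\hypW^s(q_0)$ on $W^s(\Lambda)$ given by the homoclinic relation.
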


\begin{proof}  
    The hyperbolic periodic point $p_0$ and the transitive hyperbolic set $\Lambda$ have different unstable indexes, and Conditions (4) and (5) imply, respectively, that 
    \begin{align*}
        \hypW^{u}(p_0) &\cap \hypW^{s}(\Lambda) \neq \varnothing\\
        \hypW^{u}(\Lambda) &\cap \hypW^{s}(p_0) \neq \varnothing,
    \end{align*}
    thus, $f$ has a heterodimensional cycle.

    Condition (5) of \Cref{prop:rt-bdv} implies that $\hypW^u(q_0)$ is dense. Applying the lambda lemma, we have that any open set $U$ contains a disk $D$ whose image $f^n(D)$ approximates
    the local unstable manifold of $q_0$, since it is a fixed point.
    Any stable disk $\gamma$ of the partially hyperbolic splitting with radius bigger than $r$ also
    intersects the local unstable manifold of $q_0$, and hence $\gamma$ intersects $f^n(D)$ for all $n$ sufficiently large.

    Finally, to verify that any unstable disk $J$
    has an iterate which contains a curve in $\Fcal,$ note that any open set $U$ has a disk $D$ whose preimage $f \invn(D)$ contains
    close approximations of every local unstable manifold in $\Lam.$
    This implies the forward orbit of $J$ eventually hits $U.$
\end{proof}

By checking the hypotheses of \Cref{prop:rt-bdv}, we obtain the conclusion of \Cref{teoA}. The consequences stated in \Cref{cor:consequences} are not specific to the three-dimensional case. 

\section{Criterion for robust transitivity}
\label{sec:main-statements}

In this section we describe conditions that imply the hypotheses of \Cref{prop:rt-bdv}. These conditions involve covering the manifold with hypercubes equipped with cones and checking properties of their intersections under iteration. In particular:
\begin{enumerate}
    \item \Cref{prop:weakph} gives us partial hyperbolicity;
    \item \Cref{cor:fixedpoint} allows us to check for fixed points $p_0$ and $q_0$ with indices $u + c_1$ and $u + c_2$ respectively;
    \item we use \Cref{prop:lam} to find a transitive and hyperbolic set $\Lam$ containing $q_0$, and \Cref{prop:blender} gives us conditions for a family of plaques, together with $\Lam$, to be a blender;
    \item \Cref{lemma:activate} allows us to prove that $\hypW^u(p_0)$ activates the blender;
    \item \Cref{prop:densestable} is used to prove the robust denseness of $\hypW^s(p)$ and $\hypW^u(q)$.    
\end{enumerate}

These results mentioned above are stated and proved in this section analytically using usual partial hyperbolicity notation and arguments, but they can be checked computationally in a rigorous way.

\subsection{Dynamical boxes, u-plaques and covering relation}
\label{sec:dynBox}

We define strong dynamical boxes and strong covering relations in an abstract setting. For the specific definition used for implementation purposes, see \Cref{sec:coneBox}.

\begin{definition}
    \label{def:strongDynBox}
    A \emph{strong dynamical box} $B$ is the image of
    a $C^1$ embedding
    \[
        \varphi : [-1,1]^u \times [-1,1]^s \to M
    \]
    with $u + s = \dim(M)$ and
    equipped with cone families $\Cone^u$ and $\Cone^s$ defined on the cube $\cube = [-1,1]^u \times [-1,1]^s$
    (which define cones in $TM$ by composition with $D\varphi$) satisfying:
    \begin{itemize}
    \item the unstable cone family $\Cone^u$ is of dimension $u$,
    transverse to $\{x\} \times [-1,1]^s$ for all $x$, and contains $\bbR^u \times \{0\}$;
    \item the stable cone family $\Cone^s$ is of dimension $s$,
    transverse to $[-1,1]^u \times \{y\}$ for all $y$, and contains $\{0\} \times \bbR^s$;
    \item vectors in $D\varphi \, \Cone^u$ are
    uniformly expanded by $Df$ and vectors in $D\varphi \, \Cone^s$ are uniformly contracted by $Df.$
    \end{itemize}
\end{definition}

A strong dynamical box expresses in local coordinates information that can be used
to check contraction and expansion, both in the manifold level (in local coordinates, with $[-1,1]^u \times [-1,1]^s$), and in the tangent space level, with the cones. To make this precise, we first define u and s-plaques as follows.

\begin{definition}
    \label{def:u-plaque}
    A $C^1$ embedded disk $\rho \subof B$ is a \emph{u-plaque} if it is
    the image under $\varphi$ of a graph of a function $\rho_{\loc}: [-1,1]^u \to [-1,1]^s$
    that is tangent to $\Cone^u$.
    
    Similarly, a $C^1$ embedded disk $\rho \subof B$ is an \emph{s-plaque} if it is
    the image under $\varphi$ of a graph of a function $\rho_{\loc}: [-1,1]^s \to [-1,1]^u$
    that is tangent to $\Cone^s$.
    
    If $u$ or $s = 1,$ then we may refer to u/s-plaques as \emph{u/s-curves}.
\end{definition}

Trivially, $\varphi([-1,1]^u \times \{y\})$ is a u-plaque for all $y$, and $\varphi(\{x\} \times [-1,1]^s)$ is an s-plaque for all $x$.

For two cones $\Bone, \Cone \subset V$ in a vector space $V,$
we say that $\Bone$ lies in the interior of $\Cone$ if every
non-zero vector in $\Bone$ lies in the interior of $\Cone$.

\begin{definition}
    \label{def:strongCov}
    Given $B_i$ and $B_j$ strong dynamical boxes, and consider $\floc = \varphi_j^{-1} \circ f \circ \varphi_i$.
    We say that $B_i$ and $B_j$ satisfy
    a \emph{strong covering relation}, denoted $B_i \fcovers B_j$, if:
    \begin{enumerate}
        \item For every point $p \in \cube$ with image $\floc(x) \in \cube$,
        the cone $D\floc(\Cone^u_i(p))$ is contained in the interior
        of $\Cone^u_j(\floc(p))$ and the cone $D\floc \inv(\Cone^s_j(\floc(p)))$ is contained in the interior
        of $\Cone^s_i(\varphi_i \inv(p)).$
        \item
        For every u-plaque $\al$ of $B_i,$ there is a unique subplaque
        $\bt$ in the interior of $\al$ such that $f(\bt)$ is a u-plaque of $B_j.$
        \item
        For every s-plaque $\al$ of $B_j,$ there is a unique subplaque
        $\bt$ in the interior of $\al$ such that $f\inv(\bt)$ is an s-plaque of $B_i.$
    \end{enumerate}
\end{definition}
This condition can be verified by defining the cone families
using quadratic forms and rigorous numerics.
More details are given in \Cref{sec:hset}, including how the above definition relates to similar concepts in the literature.

We now define a weaker covering relation $B_i \fweakcovers B_j$ which
is easier to verify computationally than the strong covering relation
$B_i \fcovers B_j$ defined above and which suffices for many parts
of the overall proof.
\begin{definition}
    \label{def:weakDynBox}
    A \emph{weak dynamical box} $B$ is the image of
    a $C^1$ embedding
    \[
        \varphi : [-1,1]^u \ti [-1,1]^s \to M
    \]
    with $u + s = \dim(M)$ and
    equipped with a cone family $\Cone^u.$
    The unstable cone family $\Cone^u$ is of dimension $u$
    and transverse to $\{x\} \ti [-1,1]^s$ for all $x.$
\end{definition}

As in the case of strong dynamical boxes, a $C^1$ embedded disk $\rho \subof B$ is a \emph{u-plaque} if it is tangent to $\Cone^u$
and the image under $\varphi$ of a graph of a function $[-1,1]^u \to [-1,1]^s.$
We remark that this definition has no stable cone family
or s-plaques and no requirement of uniform expansion for the unstable cones.
Note also that any strong dynamical box can be regarded as a weak dynamical
box.

\begin{definition}
    \label{def:weakCov}
    Two weak dynamical boxes satisfy
    a \emph{weak covering relation} $B_i \fweakcovers B_j$ if:
    \begin{enumerate}
        \item For every point $x \in B_i$ with image $f(x) \in B_j,$
        the cone $D\floc(\Cone^u_i(\varphi_i \inv(x)))$ is contained in the interior
        of $\Cone^u_j(\floc(\varphi_j \inv(x)))$.
        \item
        For every u-plaque $\al$ of $B_i,$ there is a subplaque
        $\bt$ in the interior of $\al$ such that $f(\bt)$ is a u-plaque of $B_j.$
    \end{enumerate}
\end{definition}

Observe that, unlike the definition of strong covering relation (\Cref{def:strongCov}), we do not require the uniqueness of the subplaque $\bt$ in the above definition.

For a collection of weak dynamical boxes $\{ B_i \},$
we say that the boxes have \emph{compatible unstable cones}
if for any $x \in B_i$ with image $f(x) \in B_j,$
the derivative $D\floc$ maps $\Cone^u_i(x)$ into the interior
of $\Cone^u_j(\floc(x)).$
Note that this is even weaker than the weak covering relation
as there are no requirements on the u-plaques.

\begin{proposition} \label{prop:weakph}
    Let $\{B_i\}$ be a covering of $M$ by weak dynamical boxes
    with compatible unstable cones
    and assume that the unstable cones are uniformly
    expanded by $Df.$
    Then, $f$ has a weak partially hyperbolic splitting
    $\Ecs \oplus \Eu.$
\end{proposition}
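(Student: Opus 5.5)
Our plan is to reduce the statement to the standard cone-field criterion for a dominated splitting, in the form recalled after \Cref{def:ph}: conditions (1) and (3) there yield a splitting $E \oplus E^u$ with $E^u$ uniformly expanded. The work is to manufacture, from the local cones $\Cone^u_i$ of the boxes, a single global cone family on $M$ that is $Df$-invariant and uniformly expanded. We then run the usual graph-transform/cone argument, checking that compactness supplies the uniformity it needs.

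\textbf{Global cone family.} As in the proof of \Cref{prop:1ph-proof}, for $q \in M$ we set
$$\Cone^u(q) := \bigcup_{\{i \,:\, q \in B_i\}} D\varphi_i(\varphi_i\inv(q))\bigl(\Cone^u_i(\varphi_i\inv(q))\bigr).$$
Take $q \in B_i$ with $f(q) \in B_j$. Compatibility of the unstable cones says that the image of the $i$-th piece lies in the interior of the $j$-th piece at $f(q)$, so $Df(q)\Cone^u(q) \subseteq \operatorname{Int}\Cone^u(f(q)) \cup \{0\}$. The hypothesis gives uniform expansion on each piece, hence on the union. Because there are finitely many boxes, each a compact image of a cube, and the cone families vary continuously, we get a uniform $\lambda > 1$ and a uniform strictness of the cone inclusion. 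This compactness step needs care: we may shrink each $\Cone^u_i$ slightly, or use closed cones on closed cubes, so that the interior inclusion becomes a uniform angular gap.

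\textbf{Building the splitting.} The dimension count is immediate: every $\Cone^u_i$ has dimension $u$ and is transverse to the $s$-dimensional fibres $\{x\}\ti[-1,1]^s$, so each local cone contains a $u$-dimensional subspace and no $(u+1)$-dimensional one. We then define
$$E^u(q) := \bigcap_{n \ge 0} Df^n\bigl(\Cone^u(f^{-n}q)\bigr).$$
Uniform strict invariance makes the iterated cones nested, and their angular width shrinks exponentially by the standard cone-contraction argument (for instance in projective or Grassmannian coordinates). The limit is therefore a $u$-dimensional subspace. It is $Df$-invariant and continuous, and it is uniformly expanded because it lies in $\Cone^u$.

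For the complementary bundle we set
$$\Ecs(q) := \bigcap_{n\ge0} Df^{-n}\bigl(\overline{T_{f^nq}M \setminus \Cone^u(f^nq)}\bigr),$$
equivalently the bundle obtained by applying the same cone argument to $Df^{-1}$ acting on the complementary cone field. This gives an invariant $(n-u)$-dimensional subbundle transverse to $E^u$. Domination follows from a standard computation: a vector $v$ outside the cone has its $E^u$-component's relative size grow under $Df^n$. This yields constants $C>0$ and $\mu<1$ with $\|Df^n|_{\Ecs}\| \,\|Df^{-n}|_{E^u}\| \le C\mu^n$. That estimate is the statement that $\Ecs \oplus \Eu$ is a weak partially hyperbolic splitting.

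\textbf{Main obstacle.} We expect the hardest point to be the passage from local to global cones. The union over overlapping boxes of the pulled-back cones need not be a convex or nicely shaped cone, so "$u$-dimensional cone" must mean a cone containing a $u$-plane and no larger subspace. The nested-intersection argument must then be run carefully for such unions. We would handle this by defining $E^u$ through any single chain of boxes along the backward orbit, using the compatible cones along that chain, and then showing the result is independent of the chain. Independence follows because the images of two cones from different chains both lie in the same target cone $\Cone^u_j$ at every step, and the contraction argument forces both iterates to converge to the same $u$-plane.
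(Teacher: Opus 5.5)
Your proposal is correct and takes essentially the paper's route. The paper also builds the global cone $\Cone^u(q)$ as the union of the pushed-forward chart cones (proof of \Cref{prop:1ph-proof}) and then appeals to the standard cone criterion that conditions (1) and (3) of \Cref{def:ph} yield a splitting $E \oplus \Eu$. Your nested-cone construction, your chain-independence argument, and your compactness remark that produces a uniform gap fill in details the paper leaves implicit, including its tacit assumption that the covering is finite.
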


The above is a direct consequence of \Cref{prop:1ph-proof} applied only for $\Cone^u$.

\subsection{Hyperbolicity and transitivity}
\label{sec:hyp-trans}

To prove transitivity, we use the concept of strong covering defined in \Cref{sec:dynBox}.

An \emph{adjacency matrix} is a square matrix $T$ with each entry
either zero or one. We allow both zeros and ones on the diagonal.
Such a matrix $T$ defines a subshift of finite type
$\sigma : \Sig_T \to \Sig_T$
where the elements of $\Sig_T$ are sequences of the form
$\{a_n\}_{n \in \bbZ}$ where $T_{a_n, a_{n+1}} = 1$ for all $n \in \bbZ.$
There are well-established algorithms which, given a matrix $T,$
determine whether or not the resulting subshift is transitive.

\begin{theorem} \label{thm:coding}
    Suppose $\{B_1, \ldots, B_m\}$ is a finite collection of
    strong dynamical boxes and $T$ is an $m \times m$ incidence matrix
    such that if $T_{ij} = 1$ then $B_i \fcovers B_j.$

    Then, for any sequence $\{a_n\}$ in $\Sig_T,$
    there is a unique point $x \in M$ such that
    $f^n(x) \in B_{a_n}$ for all $n \in \bbZ.$
    This defines a map $h : \Sig_T \to M$ such that
    $h \circ \sig = f \circ h$ and
    the image $h(\Sig_T)$ is uniformly hyperbolic.
    For a point $x = h(\{a_n\})$ in the image:
    \begin{itemize}
        \item the intersection
        $\bigcap_{n \ge 0} f \invn(B_{a_n})$ is equal to the connected component
        of $\hypW^s(x) \cap B_{a_0}$ through $x$ and
        \item
        the intersection
        $\bigcap_{n \le 0} f \invn(B_{a_n})$ is equal to the connected component
        of $\hypW^u(x) \cap B_{a_0}$ through $x.$
    \end{itemize}
\end{theorem}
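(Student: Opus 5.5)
The plan is the standard Conley--Easton / Zgliczyński--Gidea shadowing argument via nested u- and s-plaques, with uniqueness and hyperbolicity coming from the cone conditions. Fix $\{a_n\}\in\Sig_T$.

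\textbf{Step 1 (forward and backward sets).} For the forward direction, start with any u-plaque $\al_{-N}$ of $B_{a_{-N}}$. Apply condition (2) of \Cref{def:strongCov} repeatedly along $B_{a_{-N}}\fcovers B_{a_{-N+1}}\fcovers\cdots\fcovers B_{a_0}$. This produces a u-plaque $\al^{(N)}_0 \subof B_{a_0}$ contained in $\bigcap_{-N\le n\le 0} f^{-n}(B_{a_n})$. Symmetrically, condition (3) gives s-plaques $\bt^{(N)}_0\subof B_{a_0}$ contained in $\bigcap_{0\le n\le N} f^{-n}(B_{a_n})$.

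\textbf{Step 2 (graph structure).} Show that $S^+ := \bigcap_{n\ge0} f^{-n}(B_{a_n})$ is an s-plaque and $S^- := \bigcap_{n\le 0} f^{-n}(B_{a_n})$ is a u-plaque. I would argue that for each $x\in[-1,1]^u$ the set $S^+\cap\varphi(\{x\}\ti[-1,1]^s)$ is nonempty. Indeed, each finite intersection $\bigcap_{0\le n\le N}$ meets the transversal: it contains an s-plaque (Step 1), and an s-plaque is a graph over $[-1,1]^s$, so it meets every u-plaque, in particular $\varphi([-1,1]^u\ti\{y\})$. Compactness then passes to the limit.

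\textbf{Step 3 (uniqueness).} The main obstacle is to show that the transversal section of $S^+$ is a single point. Take two points $p,q \in S^+$ on the same u-plaque, for instance on the segment joining them inside $\varphi(\{\cdot\}\ti\ldots)$. It is cleaner to argue as follows. If $p\ne q$ lie in $S^+$ and the vector between them in chart coordinates lies in $\Cone^u$, then the u-direction separation grows under $\floc$ by a uniform factor $\lambda>1$ (using uniform expansion on $D\varphi\,\Cone^u$ and cone invariance, integrated along a segment tangent to the cone, which stays tangent to the cones by (1)). This contradicts $f^n(p),f^n(q)\in B_{a_n}$, which have bounded diameter. Hence any two points of $S^+$ differ by a vector outside $\Cone^u$, i.e.\ roughly in $\Cone^s$, so $S^+$ is a Lipschitz graph over $[-1,1]^s$ meeting each u-plaque exactly once. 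Symmetrically for $S^-$. The delicate point is that the chart segment between $p$ and $q$ may leave the boxes after iteration. I would first handle this using a u-plaque through $p$ and $q$ and the uniqueness in (2): the unique subplaque $\bt$ of $\al$ mapping to a u-plaque must contain both points, and iterating shrinks $\bt$ by $\lambda^{-1}$, forcing the two points to coincide.

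\textbf{Step 4 (conclusion).} With Steps 2 and 3 in hand, $S^+\cap S^-$ is exactly one point $x=h(\{a_n\})$, since an s-plaque and a u-plaque with transverse cones meet exactly once. Then $h\circ\sig=f\circ h$ is immediate from uniqueness. Continuity follows from the contraction estimates: agreement of sequences on $[-N,N]$ gives distance $O(\lambda^{-N})$. Hyperbolicity of the compact invariant set $h(\Sig_T)$ follows from invariant, uniformly expanded and contracted cones, via the cone criterion as in \Cref{prop:1ph-proof}. Finally, the characterization of $S^\pm$ comes out as follows. Points of the s-plaque $S^+$ have forward orbits shadowing $x$ at exponentially decreasing distance, so $S^+\subseteq \hypW^s(x)\cap B_{a_0}$, and $S^+$ is connected. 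Conversely, a point of the component of $\hypW^s(x)\cap B_{a_0}$ through $x$ whose orbit leaves some $B_{a_n}$ would contradict the local product structure/uniqueness in Step 3. Hence equality holds, and likewise for $\hypW^u$.
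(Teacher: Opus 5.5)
Your proposal is correct in outline, and it takes a genuinely different route from the paper's.

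\textbf{How the paper argues.} The paper runs a graph transform. On the complete space of sequences of uniformly Lipschitz graphs $\rho(n,\cdot)\colon[-1,1]^u\to[-1,1]^s$ with the sup metric, it sets $\graph(\hat\rho(n+1,\cdot))=f_n(\graph(\rho(n,\cdot)))\cap\cube$ and asserts that this map is a contraction. The Banach fixed point theorem then gives an invariant sequence of unstable plaques $\tilde\rho_n\subof B_{a_n}$, with $C^1$ regularity quoted from Hadamard--Perron. The point $x$ is the fixed point of a second contraction $q_n=f_n^{-1}(p_{n+1})$ along these plaques. Uniqueness among all orbits in the boxes comes from the closed invariant subset $\mathcal{L}_2$.

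\textbf{How you argue, and what each route buys.} You build $S^{\pm}$ directly as nested intersections. Existence comes from compactness and the fact that s-plaques cross u-plaques; uniqueness comes only from growth of intrinsic length along plaques. This avoids the $C^0$-contraction estimate for the graph transform and the second fixed-point argument. Also, the sets in the two bullets are your primary objects, so one inclusion of each bullet is essentially built in. The paper's route, in exchange, produces the invariant plaques as limits of iterates of arbitrary initial plaques, which is what yields their regularity and its reverse-inclusion argument.

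\textbf{A shared hypothesis.} Your Step 3 uses more than the literal uniqueness in condition (2). You need every point of $\al$ landing in $B_{a_1}$ to lie in the crossing subplaque $\bt$, i.e.\ $f(\al)\cap B_{a_1}=f(\bt)$. The paper makes the same assumption when it says the intersection has ``only one connected component''.

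\textbf{Points to tighten.}

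\emph{Step 2 slices.} Your argument shows, and you need, that $S^+$ meets every horizontal slice $\varphi([-1,1]^u\ti\{y\})$, indeed every u-plaque. That is what makes $S^+$ a graph over $[-1,1]^s$. The claim for vertical slices $\varphi(\{x\}\ti[-1,1]^s)$ is false in general.

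\emph{Steps 3--4.} Step 3 only shows that no chord of $S^+$ lies in $\Cone^u$ and, symmetrically, no chord of $S^-$ lies in $\Cone^s$. So $S^\pm$ are Lipschitz graphs, not yet plaques. ``Roughly in $\Cone^s$'' is exact only when $\Cone^u\cup\Cone^s$ is all of $\bbR^n$, as for the quadratic cones $\{Q\ge0\}$, $\{Q\le0\}$ used for the horseshoe. Otherwise two such graphs can meet more than once, and Step 4 does not follow.

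The missing link is cheap:
\begin{itemize}
\item $S^-$ is a continuous graph over $[-1,1]^u$, the sets $\bigcap_{-N\le n\le 0}f^{-n}(B_{a_n})$ decrease to $S^-$, and the u-plaques $\al_0^{(N)}$ of Step 1 lie in them. Hence $\al_0^{(N)}\to S^-$ uniformly.
\item For cones constant in the chart with convex cross-sections (as in all of the paper's implementations), every chord of $S^-$ therefore lies in the closed cone $\Cone^u$.
\item Any two points of $S^+\cap S^-$ then lie on a common u-plaque, and Step 3 forces them to coincide.
\end{itemize}
The same limiting argument applied to $S^+$ and the s-plaques $\bt_0^{(N)}$ is what you need for the exponential shadowing in Step 4. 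Alternatively, cite the $C^1$ regularity argument as the paper does.

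\emph{Reverse inclusion in the characterization.} This is only gestured at. One workable argument: $S^+$ is open and closed in $\hypW^s(x)\cap B_{a_0}$ for the leaf topology. Along $\partial S^+\subof\varphi([-1,1]^u\ti\partial[-1,1]^s)$ the manifold $\hypW^s(x)$ is tangent to $\Cone^s$, so it leaves the box there.
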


The above result is similar, but not identical, to existing results in symbolic dynamics and rigorous numerics \cite{Zglic2009,CKOZ-2025}, thus we give a proof in \Cref{appa}.

\begin{remark}
    \label{rem:zglic-stable-manifold}
    In \cite{Zglic2009}, the author proves the stable manifold theorem for fixed points using a covering relation (see \Cref{def:covering}) and a cone condition at the manifold level (see \Cref{eq:zglic-cone}), that imply our notion of strong covering relation, see \Cref{lem:covRel-quadCone-imply-strongCov}. Using ideas from that work, it is possible to give an alternative proof of the above result with stronger hypotheses, see \Cref{rem:coding-references}.
\end{remark}

Note that if the subshift $\sig : \Sig_T \to \Sig_T$ is transitive,
it implies that the image $h(\Sig_T)$ is a transitive hyperbolic set for $f.$

For our construction of a blender, we wish to have a 
transitive and uniformly hyperbolic set $\Lam$ and 
a set $V,$ with the property
that if $y \in \bigcap_{n \ge 0} f \invn (V)$
then $y \in \hypW^s(x)$ for some point $x \in \Lam.$
Then $V$ is a candidate region to construct the family $\mathcal{F}$
in the definition of the blender over the hyperbolic set $\Lam$.
In our case, we define the uniformly hyperbolic set
$h(\Sig_{T})$ containing $\bigcap_{n \in \bbZ} f^n(V)$.

Starting from a set $V$ subset of $M,$
we express $V$ as a finite union of sets $s_i$ of small diameter.
In the specific case of our system on the 3-torus,
the $s_i$ are small axis-aligned boxes.
We cover each $s_i$ by a much larger strong dynamical box $B_i,$
and we say that $s_i$ is a \textit{seed} for $B_i$.
We then use \Cref{thm:coding} to establish uniform hyperbolicity
and transitivity.

\begin{proposition} \label{prop:lam}
    Suppose $\{B_1, \ldots, B_m\}$ is a finite collection of strong
    dynamical boxes and $\{s_1, \ldots, s_m\}$ the corresponding
    collection of seeds.
    Suppose $T$ is an incidence matrix such that:
    \begin{enumerate}
        \item if $f(s_i)$ intersects $s_j,$ then $T_{ij} = 1,$
        \item
        if $T_{ij} = 1,$ then $B_i \fcovers B_j,$ and
        \item the matrix $T$ defines a strongly connected graph.
    \end{enumerate}
    Let $\sig : \Sig_{T} \to \Sig_{T}$ and $h : \Sig_{T} \to M$ be as in \Cref{thm:coding}.
    Then $h(\Sig_{T})$ is transitive and any point
    \[
        y \in \bigcap_{n \ge 0} f \invn(s_1 \cup \cdots \cup s_m) = \bigcap_{n \ge 0} f \invn(V)
    \]
    lies in the stable manifold of a point in $h(\Sig_{T}).$
\end{proposition}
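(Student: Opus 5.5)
Both conclusions will be derived from \Cref{thm:coding}, using hypotheses (2) and (3). For transitivity, condition (2) lets us apply \Cref{thm:coding} to the boxes $B_1,\dots,B_m$ with matrix $T$. This gives the semiconjugacy $h:\Sig_T\to M$, with $h\circ\sig = f\circ h$, and a uniformly hyperbolic image. Since the graph of $T$ is strongly connected (condition (3)), the two-sided subshift $\sig:\Sig_T\to\Sig_T$ is transitive: for any two admissible cylinder words, a path in the graph joins the last symbol of the first to the first symbol of the second. One then builds a single bi-infinite sequence $a$ whose forward orbit visits every cylinder. By \Cref{thm:coding}, $h$ is well defined and continuous, since the point is determined by the nested intersection of boxes, which shrinks uniformly. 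It is also surjective onto $h(\Sig_T)$. So $h(a)$ has a dense forward orbit in $h(\Sig_T)$, and transitivity follows. One must check $\Sig_T\neq\varnothing$; this holds because a strongly connected graph on $m\ge 1$ vertices contains a cycle (reading ``strongly connected'' as including a loop in the case $m=1$).

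For the stable-manifold statement, take $y\in\bigcap_{n\ge0}f^{-n}(V)$. For each $n\ge 0$ choose an index $a_n$ with $f^n(y)\in s_{a_n}$. Then $f(s_{a_n})\cap s_{a_{n+1}}\ni f^{n+1}(y)$, so $T_{a_na_{n+1}}=1$ by condition (1). This gives a one-sided admissible sequence $(a_n)_{n\ge0}$ with $f^n(y)\in s_{a_n}\subset B_{a_n}$. Extend it to a bi-infinite admissible sequence by prepending a backward path: strong connectivity ensures every vertex has an incoming edge, so we can choose $a_{-1},a_{-2},\dots$ inductively. Call the result $a\in\Sig_T$ and put $x=h(a)$. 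By construction, $y\in\bigcap_{n\ge0}f^{-n}(B_{a_n})$. The first bullet of \Cref{thm:coding} identifies this set with the connected component of $\hypW^s(x)\cap B_{a_0}$ through $x$. Hence $y\in \hypW^s(x)$ with $x\in h(\Sig_T)$. The equality $\bigcap_{n\ge0}f^{-n}(s_1\cup\dots\cup s_m)=\bigcap_{n\ge0}f^{-n}(V)$ is just $V=\bigcup s_i$.

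The main obstacle is mild: making sure the transitivity of the symbolic system transfers through $h$. This needs continuity of $h$, which comes from the uniqueness statement and the uniform contraction and expansion in the strong covering relations. It also needs the fact, already given by \Cref{thm:coding}, that the forward intersection is exactly the local stable set rather than merely a subset of it. A second point to note is that condition (1) is used only along the actual orbit of $y$, so no uniqueness of the coding of $y$ is needed; any choice of indices works.
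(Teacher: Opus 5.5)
Your proposal is correct and follows the same route as the paper. The paper applies \Cref{thm:coding} via condition (2) and gets transitivity of $\sig$ from strong connectivity; for $y$ it takes a forward itinerary, extends it backwards to an element of $\Sig_T$, and invokes the first bullet of \Cref{thm:coding}.

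Your write-up also makes explicit two points the paper leaves implicit. First, condition (1) is what makes the forward itinerary admissible. Second, transferring transitivity to $h(\Sig_T)$ needs continuity of $h$. This follows most simply from the uniqueness in \Cref{thm:coding} plus compactness of the boxes: if $a^{(k)}\to a$, any limit point $x^*$ of $h(a^{(k)})$ satisfies $f^n(x^*)\in B_{a_n}$ for all $n$, hence $x^*=h(a)$. That argument is more direct than the uniform-shrinking claim you cite.
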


\begin{proof}
    \Cref{thm:coding} applied to $T$ gives us the semiconjugacy $h$, and the strong connectivity assumption gives us that $\sigma: \Sig_{T} \to \Sig_{T}$ is transitive.

    For any $y \in \bigcap_{n \ge 0} f \invn(V)$, there is a sequence $\{a_n\}_{n \geq 0} $ such that $f^n(y) \in B_{a_n}$ for $n \geq 0$. By the strong connectivity assumption, we can extend it to a bi-infinite sequence $\{a_n\} \in \Sigma_T$. By setting $x = h(\{a_n\})$, we have that $y \in \hypW^s(x)$ by \Cref{thm:coding}.
\end{proof}

We apply \Cref{prop:lam} to our system on the 3-torus to
establish a uniformly hyperbolic set $\Lam = h(\Sig_{T})$
with two-dimensional unstable direction and a subset $V \subof \bbT^3$
such that any point $x \in \bigcap_{n \ge 0} f \invn(V)$
lies in the one-dimensional stable manifold of a point in $\Lam.$
The set $V$ is of the form $V_x \ti S^1 \ti V_z$, see \Cref{sec:implementation} for the details.

\subsection{Blender}
\label{sec:blender-crit}

With the transitive uniformly hyperbolic set $\Lam$ constructed,
the next step is to use it to find a blender.

\begin{proposition} \label{prop:blender}
Suppose $\Fcal$ is a robust covering collection,
$\Lam_f$ is a transitive uniformly hyperbolic set for $f$,
$\Ucal$ is a $C^1$-neighborhood of $f,$
and $K$ is a subset of $M$ with the following properties:
\begin{enumerate}
\item $\dim(\Fcal) + \dim W^s(\Lam_f) < \dim(M),$
\item
all elements of $\Fcal$ are contained in $K,$ and 
\item
if $g \in \Ucal,$ then $\bigcap_{n \in \bbZ} g^n(K)$ is contained in the
hyperbolic continuation $\Lam_g$ of $\Lam_f.$
\end{enumerate}
Then, $(\Lam_f, \Fcal)$ is a blender.
\end{proposition}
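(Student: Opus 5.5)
The plan is to check the definition of a blender directly: for every $g$ in a suitable neighborhood and every plaque $\gamma \in \Fcal$, we want $\gamma \cap W^s(\Lam_g) \neq \varnothing$. The dimension condition in \Cref{def:blender} is immediate from hypothesis (1). Since $\dim W^s(\Lam_f) = \dim M - \dim \restr{E^u}{\Lam_f}$, the inequality $\dim(\Fcal) + \dim W^s(\Lam_f) < \dim(M)$ is equivalent to $\dim \restr{E^u}{\Lam_f} > k = \dim(\Fcal)$. Transitivity and uniform hyperbolicity of $\Lam_f$ are assumed. So the only real content is the intersection property.

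I take the neighborhood to be $\Ucal \cap \Ucal'$, where $\Ucal'$ is the neighborhood from \Cref{def:rcc} for $\Fcal$. Fix $g$ in this set and $\gamma = \al_0 \in \Fcal$. Applying the robust covering property repeatedly, I get nested subsets $\bt_0 \subseteq \al_0$ with $\al_1 := g(\bt_0) \in \Fcal$, then $\bt_1 \subseteq \al_1$ with $\al_2 := g(\bt_1) \in \Fcal$, and so on. Now set
\[
C_n := \{ y \in \al_0 : g^j(y) \in \al_j \text{ for } 0 \le j \le n \} = g^{-n}(\al_n) \cap \al_0,
\]
which unwinds to $g^{-n+1}(\bt_{n-1})$. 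These sets are nonempty and decreasing. Each plaque is the embedded image of the closed unit disk, so it is compact. Hence each $C_n$ is compact (a homeomorphic preimage inside a compact plaque, or a closed subset if we pass to closures $\overline{\bt_j}$, which still lie in $\al_j$). The intersection $\bigcap_n C_n$ is therefore nonempty. Pick $y$ in it. Then $g^n(y) \in \al_n \subseteq K$ for all $n \ge 0$ by hypothesis (2).

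The main obstacle is concluding that $y \in W^s(\Lam_g)$. Hypothesis (3) only controls full orbits in $K$, while we only know the forward orbit of $y$ stays in $K$. I would handle this with an $\omega$-limit argument. The set $\omega(y)$ is nonempty, compact and $g$-invariant, and it lies in $\overline{K}$. If $K$ is closed (or after replacing $K$ by its closure, noting that the proposition is meant with compact $K$, as in the implementation), every point of $\omega(y)$ has its whole orbit in $K$. So $\omega(y) \subseteq \bigcap_{n\in\bbZ} g^n(K) \subseteq \Lam_g$.

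It then remains to show that a point whose forward orbit accumulates only on the hyperbolic set $\Lam_g$ lies in $W^s(\Lam_g)$. I would argue this via the standard shadowing/local product structure argument: $d(g^n(y), \Lam_g) \to 0$, and the maximal invariant set of a small neighborhood of $\Lam_g$ is locally maximal. To get local maximality I would use hypothesis (3) once more, since $\Lam_g \supseteq \bigcap_n g^n(K)$ with $K$ a neighborhood-type set. Then a point whose forward orbit stays in a small neighborhood of a locally maximal hyperbolic set lies in the stable set of some point of it (e.g. \cite{BDV2005}, or the shadowing lemma, which produces $x \in \Lam_g$ with $d(g^n y, g^n x) \to 0$). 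This yields $\gamma \cap W^s(\Lam_g) \ni y$, as required. If the local-maximality step proves delicate, I would instead restrict to the setting of \Cref{prop:lam}, where forward orbits in $V$ directly lie in stable manifolds of $\Lam$.
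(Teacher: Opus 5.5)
Your argument is essentially the paper's proof. Iterate the robust covering property to get nested compact sets $g^{-n}(\al_n)$, and pick $y$ in their intersection so that its forward orbit stays in $K$. Every accumulation point of $g^n(y)$ then lies in $\bigcap_{n\in\bbZ} g^n(K)\subseteq\Lam_g$, hence $d(g^n(y),\Lam_g)\to 0$; the paper reaches this via the decreasing sets $K_n=\bigcap_{j\ge -n}g^{-j}(K)$ rather than invariance of $\omega(y)$, which amounts to the same thing.

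The paper stops at that point, reading $W^s(\Lam_g)$ as the stable set $\{y : d(g^n(y),\Lam_g)\to 0\}$, so your final shadowing/local-maximality step is not needed. As written it is also not supported by the hypotheses, since nothing says $K$ contains a neighborhood of $\Lam_g$. Likewise, replacing $K$ by its closure could destroy hypothesis (3). Just take $K$ compact, as the paper tacitly does.
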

\begin{proof}
Without loss of generality, assume $\Ucal$ is also the neighbourhood of $f$
used in the definition of a robust covering collection.
Consider $g \in \Ucal$ and $\al_0 \in \Fcal.$
Then there is a sequence of plaques $\al_n \in \Fcal$ such that
$\al_{n+1} \subof g(\al_n)$ for all $n \ge 0.$
As the plaques are compact,
the nested intersection $\bigcap_{n \ge 0} g \invn(\al_n)$ is non-empty.
Let $p$ be a point in this intersection. Then $p \in \bigcap_{n \ge 0} g \invn(K).$

Define $K_n = \bigcap_{j \ge -n} g^{-j}(K)$ and note that
$g^n(p) \in K_n$ for all $n \ge 0.$
As $K$ is compact, $g^n(x)$ has a convergent subsequence $g^{n_k}(p)$
whose limit lies in $\bigcap_{n \ge 0} K_n \subof \Lam_g.$
In fact, any subsequence of $g^n(p)$ has a sub-subsequence
that converges to a point in $\Lam_g$ and so
$\lim_{n \to \infty} d(g^n(x), \Lam_g) = 0.$
This implies that $p \in W^s(\Lam_g).$
\end{proof}

As mentioned in the previous subsection,
for our example system $f = f_{k,b}$ on the 3-torus,
the transitive hyperbolic set $\Lam_f$ is defined on a superset of
$\bigcap_{n \in \bbZ} f^n(V)$ where $V = V_x \ti S^1 \ti V_z \subof \bbT^3.$
For this system,
the collection $\Fcal$ will consist of graphs of functions $V_x \to S^1 \ti S^1$
satisfying certain additional properties explained in \Cref{sec:part2}
which will be used numerically to verify that $\Fcal$ is a robust covering
collection.
These graphs are all subsets of $K = V_x \ti S^1 \ti S^1 \subof \bbT^3.$

In the formula for $f = f_{k,b},$
the $x$-coordinate in one iteration becomes the
$z$-coordinate at the next iteration.
Using this and taking the interval $V_z$ to be slightly larger than $V_x,$
if $g : \bbT^3 \to \bbT^3$ is $C^1$-close to $f,$
then $g(V_x \ti S^1 \ti S^1) \subof S^1 \ti S^1 \ti V_z$
and therefore $\bigcap_{n \in \bbZ} g^n(K) = \bigcap_{n \in \bbZ} g^n(V).$
The strong covering condition used to prove uniform hyperbolicity
is a $C^1$-open condition, and so $\bigcap_{n \in \bbZ} g^n(V)$ is contained in the
hyperbolic continuation $\Lam_g$ for all $g$ sufficiently close to $f.$
Proposition \ref{prop:blender} applies to show that
$(\Lam_f, \Fcal)$ is a blender.

We could have used $K$ directly to construct $\Lam_f,$
but this would have required that the computer program construct more
boxes than necessary, and so we construct the set using $V.$
Implementation details for the collection $\Fcal$ are given in \Cref{sec:part2}.

\subsection{Activating the blender}
\label{sec:activation-crit}

Next, we establish that there is a hyperbolic fixed point
$p_0$ with one-dimensional unstable manifold and which activates
the blender $(\Lam, \Fcal).$
Towards this aim, we use the following corollary of \Cref{thm:coding}.

\begin{corollary} \label{cor:fixedpoint}
    If $B$ is a strong dynamical box such that $B \fcovers B$
    then $\bigcap_{n \in \bbZ} f \invn(B)$ consists a single
    fixed point $p.$
    Moreover, $p$ is hyperbolic, $\hyp{B}^u(p)$ is a u-plaque
    and $\hyp{B}^s(p)$ is an s-plaque.
\end{corollary}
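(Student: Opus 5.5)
The plan is to apply \Cref{thm:coding} to the collection consisting of the single box $\{B\}$ with the $1\times 1$ incidence matrix $T = (1)$. Since $B \fcovers B$, the hypothesis of \Cref{thm:coding} holds. The shift space $\Sig_T$ then consists of exactly one sequence, the constant sequence $a_n = 1$. The theorem gives a unique point $p \in M$ with $f^n(p) \in B$ for all $n \in \bbZ$, so $\bigcap_{n \in \bbZ} f\invn(B) = \{p\}$.

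Fixedness follows from the conjugacy relation. We have $h \circ \sig = f \circ h$, and $\sig$ fixes the constant sequence, so $f(p) = f(h(\{a_n\})) = h(\sig\{a_n\}) = p$. One can also argue directly: $f(p)$ also has its whole orbit in $B$, so uniqueness forces $f(p) = p$. Hyperbolicity of $p$ is immediate, because $\{p\} = h(\Sig_T)$ is uniformly hyperbolic by \Cref{thm:coding}.

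For the plaque statements I would use the two bullet points of \Cref{thm:coding} with $x = p$. These say that $\bigcap_{n\ge 0} f\invn(B)$ is the connected component of $\hypW^s(p)\cap B$ through $p$, and that $\bigcap_{n\le 0} f\invn(B)$ is the connected component of $\hypW^u(p)\cap B$ through $p$. I interpret these components as $\hyp{B}^s(p)$ and $\hyp{B}^u(p)$.

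It remains to show that these intersections are an s-plaque and a u-plaque of $B$; this is the step needing the most care. I would use item (2) of \Cref{def:strongCov} iteratively. Start from any u-plaque $\al_0$, for instance $\varphi([-1,1]^u\times\{0\})$. Item (2) produces a nested sequence of u-plaques $\al_n$, where $\al_{n+1} = f(\bt_n)$ for a subplaque $\bt_n \subof \al_n$. Then $f^{-k}(\al_n) \subof B$ for $0 \le k \le n$, so each $\al_n$ lies in $\bigcap_{0\le k\le n} f^{k}(B)$.

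The u-plaques are graphs of functions $[-1,1]^u \to [-1,1]^s$ tangent to a cone family that is uniformly transverse to the vertical direction. They therefore form an equicontinuous (uniformly Lipschitz) family, so by Arzel\`a--Ascoli a subsequence converges to a Lipschitz graph $\al_\infty$ contained in $\bigcap_{n \le 0} f\invn(B)$. Since the latter set is a connected piece of $\hypW^u(p)$, which is a $C^1$ manifold of dimension $u$ (the dimension is forced by the cones), and which contains the full graph $\al_\infty$, it equals $\al_\infty$ by uniqueness. Tangency to $\Cone^u$ holds because $\hypW^u(p)$ is tangent to $E^u(p)$, which lies in every forward image of the cones by item (1) of \Cref{def:strongCov}; invariance of the closed cones then passes this to the limit.

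The stable side is symmetric, using item (3) and $f\inv$. The main obstacle is checking that this limit graph is $C^1$ and tangent to the cone. If the argument above is not clean enough, I would instead rely on the uniqueness of subplaques in item (2) to identify $\al_\infty$ directly.
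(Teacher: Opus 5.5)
Your reduction matches the paper. The corollary is stated there as an immediate consequence of \Cref{thm:coding} applied to the single box $B$ with $T=(1)$. Your derivation of the unique point, its fixedness (via $h\circ\sig=f\circ h$ or via uniqueness) and its hyperbolicity is fine. The paper gives no separate argument for the plaque assertions. They come from the proof of \Cref{thm:coding} in \Cref{appa}, which shows that the component of $\hypW^u(x)\cap B_{a_0}$ through $x$ coincides with $\tilde\rho_0$. That set is the fixed point of a contracting graph transform and is a u-plaque by construction; the s-side is analogous. The cheapest fix is simply to quote that part of the proof.

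Your independent argument for the plaque claims has a gap at the step ``it equals $\al_\infty$ by uniqueness'', because no uniqueness has been established there. Arzel\`a--Ascoli gives only some subsequential limit, which a priori depends on the initial plaque $\al_0$ and on the subsequence. It yields $\al_\infty\subseteq \hyp{B}^u(p)$, but nothing forces the reverse inclusion. Being a connected piece of the injectively immersed $u$-manifold $\hypW^u(p)$ that contains a full Lipschitz graph does not, by itself, exclude further points of $\hyp{B}^u(p)$ off that graph.

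Since you then deduce the $C^1$ regularity and cone tangency of $\al_\infty$ from this identification, those conclusions inherit the gap. Your tangency argument as written also only concerns $T_p\hypW^u(p)=E^u(p)$. At another point $q$ you would have to follow the backward orbit of $q$, which stays in $B$ and converges to $p$, and push the cone forward along it.

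The fallback via uniqueness in item (2) of \Cref{def:strongCov} does not help. That is uniqueness at each single step, not uniqueness of the limit over all choices of initial plaques.

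The missing ingredient is the contraction property of the graph transform, i.e.\ Banach's fixed point theorem in place of compactness. With it, the limit is independent of the initial data. Starting from plaques through the backward orbit $f^{-n}(q)$ of an arbitrary $q\in\hyp{B}^u(p)$ then shows that $q$ lies on the limit plaque. This is exactly how the appendix proves that the component equals $\tilde\rho_0$.
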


The notation $\hyp{B}^s(p)$ above is used for the connected component of $\hypW^s(p) \cap B$ through the fixed point $p$, and analogously for the unstable one.

A similar corollary can be stated which establishes the existence
of hyperbolic periodic orbits, but we do not need it for our specific
construction.

Another consequence of \Cref{thm:coding} is the following.

\begin{corollary}
    \label{cor:transv-inter}
    If $B$ is a strong dynamical box such that $B \fcovers B$, with $p$ its unique fixed point, then every u-plaque in $B$ transversely intersects $\hyp{B}^s(p)$. Analogously, every s-plaque in $B$ transversely intersects $\hyp{B}^u(p)$.
\end{corollary}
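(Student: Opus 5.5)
The plan is to read off \Cref{cor:transv-inter} from \Cref{cor:fixedpoint} and the graph structure of plaques in the box coordinates. By \Cref{cor:fixedpoint}, $p$ is hyperbolic and $\hyp{B}^s(p)$ is an s-plaque of $B$. Working in the chart $\varphi$, I write this s-plaque as the graph $\{(\sigma(y),y) : y \in [-1,1]^s\}$ of a $C^1$ map $\sigma:[-1,1]^s \to [-1,1]^u$ tangent to $\Cone^s$. A u-plaque $\al$ is the graph $\{(x,\rho(x)) : x\in[-1,1]^u\}$ of a $C^1$ map $\rho:[-1,1]^u\to[-1,1]^s$ tangent to $\Cone^u$. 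A point lies in both graphs exactly when $x=\sigma(\rho(x))$, so intersection points correspond to fixed points of the continuous self-map $\sigma\circ\rho$ of the cube $[-1,1]^u$. Brouwer's fixed point theorem then gives at least one intersection point. The s-plaque case is symmetric, with $\rho\circ\sigma$ acting on $[-1,1]^s$ and $\hyp{B}^u(p)$ a u-plaque.

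For transversality at an intersection point $q$, the tangent space of $\al$ at $q$ is a $u$-dimensional subspace contained in $\Cone^u(q)$, and the tangent space of $\hyp{B}^s(p)$ is an $s$-dimensional subspace contained in $\Cone^s(q)$. Since $u+s=\dim M$, it suffices to show these two subspaces meet only in $0$. I expect this to be the only real obstacle, because \Cref{def:strongDynBox} does not state explicitly that $\Cone^u\cap\Cone^s=\{0\}$.

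I would handle it in the following way. A nonzero vector lying in both cones would, by the last item of \Cref{def:strongDynBox}, be uniformly expanded by $Df$ and also uniformly contracted by $Df$, which is impossible. To apply this I have to use that the expansion and contraction conditions are stated for the same point and the same vector. Alternatively, one can use the cone invariance from item (1) of $B\fcovers B$ along the orbit of $q$: forward iterates of a vector in $\Cone^u$ stay in $\Cone^u$ and grow, while the vector, being in $\Cone^s$, would have to shrink under forward iteration. This contradiction rules out a common nonzero vector, so the two tangent spaces are complementary and the intersection is transverse.

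If the expansion and contraction reading of the box definition turns out to be too weak, for example if expansion is only meant with respect to some adapted norm, I would fall back on \Cref{thm:coding}. By that theorem, $\hyp{B}^s(p)$ and $\hyp{B}^u(p)$ are genuine local stable and unstable manifolds of the hyperbolic fixed point $p$. So their tangent spaces at $p$ are $E^s_p$ and $E^u_p$, which are complementary. The cone conditions at any other point $q$ then play the same role as the splitting does at $p$.
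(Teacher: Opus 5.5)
Your proof is correct, but it takes a different route from the paper's.

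\textbf{The paper's route.} The paper argues dynamically. Item (2) of $B \fcovers B$, applied repeatedly, gives a nested sequence of subplaques of the u-plaque $\gamma_0$ whose images remain u-plaques of $B$. Compactness then yields a point $z\in\gamma_0$ with $f^k(z)\in B$ for all $k\ge 0$. Finally, \Cref{thm:coding} identifies $\bigcap_{k\ge 0} f^{-k}(B)$ with $\hyp{B}^s(p)$. Transversality is dismissed in one line as a consequence of hyperbolicity.

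\textbf{Your route.} You start from the conclusion of \Cref{cor:fixedpoint}, that $\hyp{B}^s(p)$ is a full s-plaque, i.e.\ a graph over all of $[-1,1]^s$. You then reduce existence of an intersection point to Brouwer's theorem for $\sigma\circ\rho$ on $[-1,1]^u$. This is a static fact: any u-plaque of a box meets any s-plaque of the same box. The covering relation enters only through \Cref{cor:fixedpoint}.

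\textbf{What each approach buys.} Your route gives a cleaner, more general statement. It also makes transversality explicit: $\Cone^u(q)\cap\Cone^s(q)=\{0\}$, because a common nonzero vector would be both uniformly expanded and uniformly contracted by $Df$ at the same point. That is exactly what the paper's phrase ``hyperbolicity implies transversality'' leaves implicit. The paper's route needs only the forward-orbit characterization of $\hyp{B}^s(p)$ from \Cref{thm:coding} for existence, not its graph structure. It also reuses the nested-subplaque mechanism that drives \Cref{prop:blender} and \Cref{lemma:activate}.

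\textbf{Minor remarks on your extra arguments.} Your one-step cone argument already suffices, so the other two are unnecessary. The second variant, iterating forward along the orbit of $q$, is imprecise as stated. $\Cone^s$ is only $Df^{-1}$-invariant, so you would need the additional remark that $\hyp{B}^s(p)$ is forward invariant, to keep the iterated vector in $\Cone^s$. The fallback via the splitting at $p$ is not needed at all.
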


\begin{proof}
    If $\gamma_0$ is a u-plaque for $B$, then it has a subcurve $\gamma_1 \subseteq \gamma_0$ such that $f(\gamma_1)$ is a u-plaque for $B$. Inductively, we have that $\gamma_0$ contains a point $z$ whose orbit under $f$ is always inside $B$, that is, $f^{k}(z) \in B$ for all $k \geq 0$. By \Cref{thm:coding}, $\hypW^s(p) \cap B = \bigcap_{k = 0}^{\infty} f^{-k} (B)$, thus $z \in \hypW^{s}(p) \cap \gamma_0$. The hyperbolicity implies that this intersection is transversal.
\end{proof}

For a blender $(\Lam, \Fcal)$ and a weak dynamical box $B,$
we also introduce the notation $B \fweakcovers \Fcal$
to denote that every u-plaque $\al$ of $B$ contains a subplaque $\bt$
in its interior such that $f(\bt) \in \Fcal.$

\begin{lemma} \label{lemma:activate}
    If $B_0, B_1, \ldots, B_l$ are dynamical boxes and
    $\Fcal$ is a blender such that
    \[
        B_0 \fcovers B_0 \fweakcovers B_1
        \fweakcovers \cdots \fweakcovers B_l
        \fweakcovers \Fcal,
    \]
    then the unique hyperbolic fixed point in $B_0$ activates the blender.
\end{lemma}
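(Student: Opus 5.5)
Since $B_0 \fcovers B_0$, \Cref{cor:fixedpoint} gives a unique hyperbolic fixed point $p$ in $B_0$. It also tells us that the local unstable manifold $\alpha_0 := \hyp{B_0}^u(p)$ is a u-plaque of $B_0$. We also need the index: because $\alpha_0$ is a u-plaque of dimension $u$ (here $\dim \Fcal$), the unstable index of $p$ matches the dimension of the plaques in $\Fcal$, as \Cref{def:blender} requires. The plan is to push $\alpha_0$ forward along the chain of weak covering relations. At each step we keep only the part that lands as a u-plaque in the next box, and at the end we obtain a plaque in $\Fcal$ contained in $\hypW^u(p)$.

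Concretely, we build the chain inductively. By $B_0 \fweakcovers B_1$, the u-plaque $\alpha_0$ contains a subplaque $\beta_0$ such that $\alpha_1 := f(\beta_0)$ is a u-plaque of $B_1$. Suppose $\alpha_i$ is a u-plaque of $B_i$ with $\alpha_i \subseteq f^i(\alpha_0)$. Then $B_i \fweakcovers B_{i+1}$ gives a subplaque $\beta_i \subseteq \alpha_i$ with $\alpha_{i+1} := f(\beta_i)$ a u-plaque of $B_{i+1}$, and $\alpha_{i+1} \subseteq f^{i+1}(\alpha_0)$. After reaching $\alpha_l \subseteq f^l(\alpha_0)$, a u-plaque of $B_l$, the relation $B_l \fweakcovers \Fcal$ gives a subplaque $\beta_l \subseteq \alpha_l$ with $\gamma := f(\beta_l) \in \Fcal$. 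Thus $\gamma \subseteq f^{l+1}(\alpha_0)$.

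It remains to check that $\gamma$ lies in $\hypW^u(p)$. This follows from invariance: $\alpha_0 \subseteq \hypW^u(p)$ and $f(\hypW^u(p)) = \hypW^u(p)$ because $p$ is fixed, so $f^{l+1}(\alpha_0) \subseteq \hypW^u(p)$. Hence $\hypW^u(p)$ contains the sub-plaque $\gamma \in \Fcal$, which is exactly the definition of activating the blender.

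The only point needing care is the first step. We must make sure the weak covering relation applies to $\alpha_0$, that is, that $\hyp{B_0}^u(p)$ is a genuine u-plaque of $B_0$ viewed as a weak dynamical box. This holds: \Cref{cor:fixedpoint} provides a graph over $[-1,1]^u$ tangent to $\Cone^u$, and the weak-box notion of u-plaque uses the same cone family. Everything after that is routine bookkeeping of inclusions.
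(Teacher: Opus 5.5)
Your proposal is correct and follows essentially the same route as the paper. Both obtain $p$ and the u-plaque $\hyp{B_0}^u(p)$ from \Cref{cor:fixedpoint}, push it forward inductively through the weak covering relations until a plaque of $\Fcal$ appears, and conclude via $f$-invariance of $\hypW^u(p)$; your remarks on the index and on viewing the strong box $B_0$ as a weak box are details the paper leaves implicit.
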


\begin{proof}
    Since $B_0 \fcovers B_0$, \Cref{cor:fixedpoint} gives a unique hyperbolic fixed point $p_0 \in B_0$ and that $\hyp{B_0}^u(p_0)$ is a u-plaque $\al_0$ of $B_0$.

    Since $B_0 \fweakcovers B_1$, by \Cref{def:weakCov} there is a subplaque $\bt_0$ in the interior of $\al_0$ such that $f(\bt_0)$ is a u-plaque $\al_1$ of $B_1$. Applying the remaining weak covering relations inductively, for each $1 \le k \le l$ we obtain a u-plaque $\al_k$ of $B_k$ that is the image of a subplaque of $\al_{k-1}$. Since $\hypW^u(p_0)$ is $f$-invariant, $\al_k \subof \hypW^u(p_0)$ for each $1 \le k \le l$.

    Finally, since $B_l \fweakcovers \Fcal$, there is a subplaque $\bt_l$ in the interior of $\al_l$ such that $f(\bt_l) \in \Fcal$. Therefore $f^{l+1}(\gam)$, for some subcurve $\gam \subof \al_0 \subof \hypW^u(p_0)$, belongs to $\Fcal$, so
    the unstable manifold $\Wu(p_0)$ activates the blender $(\Lam, \Fcal)$.
\end{proof}

For our example system, we use \Cref{lemma:activate} to prove
that there is a hyperbolic fixed point $p_0$ with one-dimensional
unstable manifold and which activates the blender.

\subsection{Dense stable and unstable sets}
\label{sec:denseW}

For a set $s$ in the interior of a weak dynamical box $B,$
we call $s$ a \emph{good seed} if the following property holds:
if $N \subof M$ is an embedded $u$-dimensional disk
such that,
\begin{enumerate}
    \item $N$ intersects $s$ at a point $x_0,$
    \item
    $\del N$ does not intersect $B,$ and
    \item
    if $x \in N \cap B,$ then $T_x N \subof \Cone^u(x),$ \end{enumerate}
then the connected component $N \cap B$ through $x_0$ is a u-plaque for $B.$

\begin{figure}[ht]
    \centering
    \begin{subfigure}{0.48\textwidth}        
        \def\svgwidth{4cm}
\begingroup%
  \makeatletter%
  \providecommand\color[2][]{%
    \errmessage{(Inkscape) Color is used for the text in Inkscape, but the package 'color.sty' is not loaded}%
    \renewcommand\color[2][]{}%
  }%
  \providecommand\transparent[1]{%
    \errmessage{(Inkscape) Transparency is used (non-zero) for the text in Inkscape, but the package 'transparent.sty' is not loaded}%
    \renewcommand\transparent[1]{}%
  }%
  \providecommand\rotatebox[2]{#2}%
  \newcommand*\fsize{\dimexpr\f@size pt\relax}%
  \newcommand*\lineheight[1]{\fontsize{\fsize}{#1\fsize}\selectfont}%
  \ifx\svgwidth\undefined%
    \setlength{\unitlength}{147.4015748bp}%
    \ifx\svgscale\undefined%
      \relax%
    \else%
      \setlength{\unitlength}{\unitlength * \real{\svgscale}}%
    \fi%
  \else%
    \setlength{\unitlength}{\svgwidth}%
  \fi%
  \global\let\svgwidth\undefined%
  \global\let\svgscale\undefined%
  \makeatother%
  \begin{picture}(1,0.96153846)%
    \lineheight{1}%
    \setlength\tabcolsep{0pt}%
    \put(0.52959582,0.6149986){\color[rgb]{0,0,0}\makebox(0,0)[lt]{\lineheight{1.25}\smash{\begin{tabular}[t]{l}$s$\end{tabular}}}}%
    \put(0.78159926,0.89466405){\color[rgb]{0,0,0}\makebox(0,0)[lt]{\lineheight{1.25}\smash{\begin{tabular}[t]{l}$B$\end{tabular}}}}%
    \put(0,0){\includegraphics[width=\unitlength,page=1]{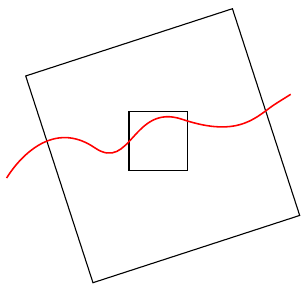}}%
  \end{picture}%
\endgroup%

        \caption{$s$ is a good seed for $B$.}
        \label{fig:goodseed}
    \end{subfigure}
    \hfill
    \begin{subfigure}{0.48\textwidth}        
        \def\svgwidth{4.2cm}
\begingroup%
  \makeatletter%
  \providecommand\color[2][]{%
    \errmessage{(Inkscape) Color is used for the text in Inkscape, but the package 'color.sty' is not loaded}%
    \renewcommand\color[2][]{}%
  }%
  \providecommand\transparent[1]{%
    \errmessage{(Inkscape) Transparency is used (non-zero) for the text in Inkscape, but the package 'transparent.sty' is not loaded}%
    \renewcommand\transparent[1]{}%
  }%
  \providecommand\rotatebox[2]{#2}%
  \newcommand*\fsize{\dimexpr\f@size pt\relax}%
  \newcommand*\lineheight[1]{\fontsize{\fsize}{#1\fsize}\selectfont}%
  \ifx\svgwidth\undefined%
    \setlength{\unitlength}{158.74015748bp}%
    \ifx\svgscale\undefined%
      \relax%
    \else%
      \setlength{\unitlength}{\unitlength * \real{\svgscale}}%
    \fi%
  \else%
    \setlength{\unitlength}{\svgwidth}%
  \fi%
  \global\let\svgwidth\undefined%
  \global\let\svgscale\undefined%
  \makeatother%
  \begin{picture}(1,0.89285714)%
    \lineheight{1}%
    \setlength\tabcolsep{0pt}%
    \put(0.35324856,0.60338562){\color[rgb]{0,0,0}\makebox(0,0)[lt]{\lineheight{1.25}\smash{\begin{tabular}[t]{l}$s$\end{tabular}}}}%
    \put(0.79191664,0.83075947){\color[rgb]{0,0,0}\makebox(0,0)[lt]{\lineheight{1.25}\smash{\begin{tabular}[t]{l}$B$\end{tabular}}}}%
    \put(0,0){\includegraphics[width=\unitlength,page=1]{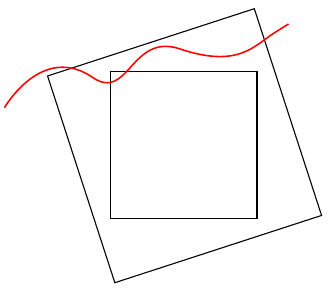}}%
  \end{picture}%
\endgroup%

        \caption{$s$ is not a good seed for $B$.}
        \label{fig:badseed}
    \end{subfigure}
    \caption{Definition of good seed.}
    \label{fig:seed}
\end{figure}

\begin{proposition} \label{prop:goodseeds}
    Let $\{s_i\}$ be a covering of $M$
    and let $\{B_i\}$ be a covering of $M$ as in \Cref{prop:weakph}
    such that each set $s_i$ is a good seed for $B_i.$
    Then,
    for any leaf $L$ of the unstable foliation of $f$
    and any subset $U \subof L$ with non-empty interior,
    there is $n \ge 1$ such that $f^n(U)$ contains a u-plaque
    of one of the boxes $B_i.$
\end{proposition}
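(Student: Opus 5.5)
The strategy is to grow $U$ under iteration until it contains an unstable disk that is large compared with the boxes, and then use the good-seed property to extract a u-plaque. Since $U \subof L$ has non-empty interior in the leaf $L$, it contains a closed unstable disk $D_0$ of some small radius $\epsilon>0$ centered at a point $x$.

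\textbf{Step 1: choose a size threshold.} By \Cref{prop:weakph} the splitting $\Ecs \oplus \Eu$ exists, and $\Eu(y)$ lies in the unstable cone of every box containing $y$. Indeed, $\Eu(y)$ is the intersection of the iterated cones, and the cones are compatible. So every unstable disk $N$ satisfies condition (3) of the good-seed definition for every box $B_i$.

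Next I would pick $\delta>0$ as a uniform bound, coming from compactness, with the following property: any point $y\in M$, say $y\in s_i$, has the property that the unstable disk of radius $\delta$ centered at $y$ has boundary outside $B_i$. Here is why such a $\delta$ should exist. The unstable cones are transverse to $\{x\}\ti[-1,1]^s$, so along an unstable curve leaving $y$ the $[-1,1]^u$-coordinate in the chart $\varphi_i$ moves at a rate bounded below by a uniform constant. After unstable length $\delta$, that coordinate has left $[-1,1]^u$, and the curve has exited $B_i$ through its u-boundary.

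A technical point must be handled here. Before it exits, the disk could leave $B_i$ through the s-boundary $[-1,1]^u\ti\partial[-1,1]^s$ and later come back in. This is harmless, because the good-seed property only concerns the connected component of $N\cap B_i$ through $x_0$. What we need is simply that $\partial N\cap B_i=\varnothing$. To secure this, I would take $\delta$ so large that the u-coordinate displacement between any point of $\partial N$ and $y$ exceeds $2$, using a lower bound on the $[-1,1]^u$-component of unit vectors in the cones.

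If instead the transversality bound only controls tangent vectors inside each box separately, I would replace this by an argument with a Lebesgue number of the cover $\{s_i\}$ together with the compactness of the boxes. This is the step I expect to be the main obstacle, since the leaf can wander in and out of $B_i$. The cleanest fallback is to use the fact that $L$ is a leaf of a foliation with uniformly bounded geometry: points at unstable distance at least $\delta$ from $y$ are at ambient distance bounded below, because the leaves are uniformly properly embedded at small scales. The choice of $\delta$ is then made finite by compactness of $M$ and finiteness of the cover.

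\textbf{Step 2: growth.} Uniform expansion of the unstable cones gives $\|Df v\|\ge \lambda\|v\|$ with $\lambda>1$ for $v\in\Eu$. Hence $f^n(D_0)$ contains the unstable disk of radius $\lambda^n\epsilon$ centered at $f^n(x)$, because unstable distances expand under $f$ and $f$ maps leaves to leaves. Choose $n\ge1$ with $\lambda^n\epsilon\ge\delta$. Let $N$ be the unstable disk of radius $\delta$ centered at $f^n(x)$; then $N\subof f^n(U)$.

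\textbf{Step 3: conclude.} The point $x_0=f^n(x)$ lies in some seed $s_i$, since the seeds cover $M$. The disk $N$ passes through $x_0$, has $\partial N\cap B_i=\varnothing$ by the choice of $\delta$, and is tangent to $\Cone^u$ inside $B_i$ by Step 1. Since $s_i$ is a good seed for $B_i$, the component of $N\cap B_i$ through $x_0$ is a u-plaque of $B_i$. That u-plaque is contained in $f^n(U)$, which proves the claim.
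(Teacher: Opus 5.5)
Your overall plan matches the paper's: grow an unstable disc under iteration, then feed a suitable subdisc $N \subset f^n(U)$ into the good-seed property. Several parts are correct as written: the observation that unstable discs are tangent to the cone of every box containing them, the growth estimate in Step 2, and the conclusion in Step 3.

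\textbf{The gap.} It is in Step 1, in the existence of a single radius $\delta$ such that, for every $y$, the sphere of unstable radius $\delta$ about $y$ misses $B_i$ entirely. Neither of your arguments produces such a $\delta$.
\begin{itemize}
\item The chart $\varphi_i$ and its cone family live only on the cube. Once the leaf leaves $B_i$ through the s-boundary there is no u-coordinate left to track. Taking $\delta$ larger is counterproductive: unstable leaves are recurrent (the paper eventually proves they are dense). So for large $\delta$ the sphere of radius $\delta$ about $y$ can perfectly well lie in $B_i$ for some $y \in s_i$.
\item The fallback fails for the same reason. Points at large unstable distance from $y$ need not be ambiently far from $y$. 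Even the local lower bound you can get from the foliation's geometry is much smaller than the size of $B_i$, so it cannot give $\partial N \cap B_i = \varnothing$.
\end{itemize}
More generally, the exit and re-entry times of the leaf through $y$ vary with $y \in s_i$. There is no reason one $\delta$ should fall in an ``outside $B_i$'' window for all $y$ simultaneously.

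\textbf{The fix.} Let $N$ depend on the point, as the paper's ``overflow'' argument does: $N$ is a box-adapted subdisc of $f^n(N_0)$, not a round disc of fixed radius. Here is the argument for $u=1$, which is the case actually used for both $f$ and $f^{-1}$.
\begin{enumerate}
\item By compactness and finiteness of the cover, the chart u-component of unit vectors in the cones is bounded below. So along any curve inside $B_i$ tangent to the cones, the u-coordinate is strictly monotone.
\item Hence every connected piece of an unstable curve inside $B_i$ has length at most a uniform constant $K$.
\item Take $n$ large enough that $f^n(D_0)$ contains an unstable arc of half-length greater than $K$ about $x_0=f^n(x)$. Then the component $C$ of that arc intersected with $B_i$, through $x_0$, lies in the interior of the arc.
\item By maximality of $C$, there are points of the arc outside $B_i$ arbitrarily close beyond each endpoint of $C$.
\item The subarc $N \subset f^n(U)$ between two such points contains $x_0 \in s_i$, has $\partial N \cap B_i = \varnothing$, and is tangent to the cones. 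So the good-seed property applies, and Step 3 goes through unchanged.
\end{enumerate}
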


\begin{proof}
    Let $x \in \mathrm{int}(U)$ and let $N_0 \subof U$ be a compact disc centered at $x$ in the leaf $L.$ Since $\{s_i\}$ covers $M,$ for each $n \ge 0$ choose an index $a_n$ with $f^n(x) \in s_{a_n}.$

    Each box $B_i$ has a $C^1$ projection $\pi^u : B_i \to [-1,1]^u$ onto its unstable coordinates. Since the unstable cones are uniformly expanded by \Cref{prop:weakph}, $Df$ uniformly expands the $\pi^u$-projection of vectors in $\Cone^u$ by a factor $\mu > 1$.

    Since $N_0$ is tangent to $\Cone^u$ and $f$ preserves the cone family, $f^n(N_0)$ is a $C^1$ disc tangent to $\Cone^u_{B_{a_n}}$ for every $n \ge 0,$ and the diameter of $\pi^u(f^n(N_0) \cap B_{a_n})$ grows at least as $C\mu^n$ for a constant $C > 0$ depending only on $N_0$ and the atlas. In particular, for all large enough $n,$ the image $f^n(N_0)$ overflows $B_{a_n}$ in such a way that
    some subset of $f^n(N_0)$ is a disc $N$ as in the definition of a
    good seed.
\end{proof}

As a consequence, we have the following criterion for the density of $\hypW^s(p_0)$.

\begin{proposition} \label{prop:densestable}
    Let $\{B_i\}$ be a finite family of weak dynamical boxes
    covering $M$ as in \Cref{prop:goodseeds}.
    Let $T_0$ be a strong dynamical box such that $T_0 \fcovers T_0$
    and let $p$ be the unique fixed point in $T_0.$
    Suppose for every $i,$ there is a finite sequence
    $X_1, \ldots, X_m$ of weak dynamical boxes such that
    \[
        B_i \fweakcovers X_1
        \fweakcovers \cdots \fweakcovers X_m
        \fweakcovers T_0.
    \]
    Then, $\hypW^s(p)$ is dense in $M$ and tangent to $\Ecs.$
\end{proposition}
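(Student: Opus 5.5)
To prove density of $\hypW^s(p)$, I would show that for every nonempty open set $W \subof M$ some point of $W$ lies in $\hypW^s(p)$. The plan is to reduce the open set to a piece of an unstable leaf, push that piece forward until it contains a u-plaque of some $B_i$ (by \Cref{prop:goodseeds}), and then follow the chain of weak covering relations into $T_0$, where \Cref{cor:transv-inter} produces an intersection with the local stable manifold of $p$. Pulling back gives a point of $\hypW^s(p)$ in the original open set.

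\emph{Step 1: reduce to an open subset of an unstable leaf.} By \Cref{prop:weakph} there is a splitting $\Ecs \oplus \Eu$ with an unstable foliation $W^u$ tangent to $\Eu \subof \Cone^u$. Given a nonempty open $W \subof M$, pick any leaf $L$ meeting $W$ and set $U = L \cap W$. This $U$ is a subset of $L$ with nonempty interior in $L$.

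\emph{Step 2: reach a u-plaque of some $B_i$.} \Cref{prop:goodseeds} gives $n \ge 1$ and an index $i$ such that $f^n(U)$ contains a u-plaque $\al_0$ of $B_i$.

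\emph{Step 3: follow the chain into $T_0$.} Take the chain $B_i \fweakcovers X_1 \fweakcovers \cdots \fweakcovers X_m \fweakcovers T_0$ associated with $B_i$. Apply \Cref{def:weakCov} repeatedly, as in the proof of \Cref{lemma:activate}, to get nested subplaques. This yields a subplaque $\gam \subof \al_0$ such that $f^{m+1}(\gam)$ is a u-plaque of $T_0$. One point needs care: \Cref{def:weakCov} is stated for the weak covering relation, while $T_0$ is a strong box. Since any strong box may be regarded as a weak box, the relation $X_m \fweakcovers T_0$ still delivers a u-plaque of $T_0$ in the sense required by \Cref{cor:transv-inter}, because the unstable cones are the same.

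\emph{Step 4: intersect the stable manifold of $p$ and pull back.} Since $T_0 \fcovers T_0$, \Cref{cor:transv-inter} shows that the u-plaque $f^{m+1}(\gam)$ meets $\hyp{T_0}^s(p) \subof \hypW^s(p)$ at some point $z$. Then $f^{-(n+m+1)}(z)$ lies in $U \subof W$. It also lies in $\hypW^s(p)$, because the stable set of a fixed point is invariant. This proves density.

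\emph{Step 5: tangency to $\Ecs$.} By \Cref{cor:fixedpoint}, $\hyp{T_0}^s(p)$ is an s-plaque, tangent to the $s$-dimensional stable cone of $T_0$, and it is a local stable manifold of $p$. The global set is $\hypW^s(p) = \bigcup_k f^{-k}(\hyp{T_0}^s(p))$. Take a tangent vector $v$ to this set at some point. Its forward iterates remain tangent to the local stable manifold and so are eventually contracted; they cannot lie in the uniformly expanded cone $\Cone^u$. By the standard characterization of the center-stable bundle of the dominated splitting, namely that $\Ecs$ is the set of vectors whose forward iterates never enter the unstable cones, $v \in \Ecs$.

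\emph{Main obstacle.} The step I expect to need the most care is Step 5. Domination must be used to conclude that a vector whose forward iterates never enter $\Cone^u$ lies in $\Ecs$; I would quote the usual cone-field argument (\cite{HPS}) for this. A smaller check is the compatibility in Step 3 between the weak-box notion of u-plaque and the strong-box notion for $T_0$.
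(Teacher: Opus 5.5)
Your proposal is correct and follows the paper's proof essentially step for step. You restrict the open set to an unstable leaf, use \Cref{prop:goodseeds} to get a u-plaque of some $B_i$, and push it along the chain of weak coverings to a u-plaque of $T_0$. You then apply \Cref{cor:transv-inter} and pull the intersection point back by $f^{-(n+m+1)}$.

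Your Step 5 proves the tangency to $\Ecs$, which the paper's proof does not address. The argument is valid: vectors tangent to $\hypW^s(p)$ have forward iterates that shrink, so they can never enter the expanded cone $\Cone^u$, and by domination they therefore lie in $\Ecs$.
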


\begin{proof}
    Let $U \subof M$ be open and non-empty, and $L$ be an unstable leaf intersecting $U$. Then the set $U \cap L$ has non-empty interior in $L.$ By \Cref{prop:goodseeds}, there exist $n \ge 1$ and an index $i$ such that $f^n(U)$ contains a u-plaque $\al$ of $B_i.$

    By hypothesis there is a chain $B_i \fweakcovers X_1 \fweakcovers \cdots \fweakcovers X_m \fweakcovers T_0.$ Applying the definition of the weak covering relation at each step, the u-plaque $\al$ contains a subplaque $\beta$ such that $f^{m+1}(\beta)$ is a u-plaque $\gam$ of $T_0,$
    and $\gam \subof f^{n+m+1}(U).$

    \Cref{cor:transv-inter} implies that $\gam$ intersects $\hyp{T_0}^s(p)$ in a point $y$. Thus $y \in f^{n+m+1}(U) \cap \hypW^s(p).$ Since $\hypW^s(p)$ is $f$-invariant, $f^{-(n+m+1)}(y) \in U \cap \hypW^s(p)$, and $\hypW^s(p)$ is dense in $M.$
\end{proof}

For our example system on the 3-torus,
we use \Cref{prop:weakph} and \Cref{prop:densestable}
to establish that $f$ has a weakly partially hyperbolic splitting
$\Ecs \oplus \Eu$ with $\dim(\Eu) = 1$
and that the fixed point $p_0$ which activates the blender
has a dense stable manifold tangent to $\Ecs.$

We also apply \Cref{prop:weakph} and \Cref{prop:densestable}
to $f \inv$ in place of $f$ and establish that
$f$ has a weakly partially hyperbolic splitting
$\Es \oplus \Ecu$ with $\dim(\Es) = 1$
and a hyperbolic fixed point $q_0$
whose two-dimensional unstable manifold is dense in $M$
and tangent to $\Ecu.$
We establish the existence of $q_0$
via a strong covering $B_q \fcovers B_q$ where $B_q$ is a subset of $V$
and it therefore follows that $q_0 \in \Lam.$

All of these objects together,
$p_0, q_0, \Lam, V, \Fcal,$ and the partially hyperbolic splitting of $f,$
establish that $f$ is robustly transitive by \Cref{prop:rt-bdv}.

\section{Implementation}
\label{sec:implementation}

For the implementation, we consider the derived-from-Anosov family of diffeomorphisms
$$f_{k,b}(x,y,z) = \psi_b \circ A_k(x, y, z) = (kx -y - z, x + y - b \sin (2 \pi x), x),$$
with $k \in \bbN$ and $b \in \bbR$. For this family, we can prove the following result.

\begin{proposition}
\label{prop:DA-non-anosov}
    If $b > \dfrac{1}{2\pi}$, then $f_{k,b}$ has at least three fixed points, namely $p_0 = (0, 0, 0)$, $q_0 = (x_0, (k-2)x_0, x_0)$ and $-q_0$, where $x_0$ is a positive root for $x - b \sin(2\pi x)$ belonging to $(0, \frac{1}{2})$. Additionally, these fixed points are hyperbolic, $Df_{k,b}(p_0)$ has unstable dimension equal to one, and $Df_{k,b}(q_0) = Df_{k,b}(-q_0)$ has unstable dimension equal to two.
\end{proposition}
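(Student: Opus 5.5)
The plan is to reduce both claims to one-variable problems. The fixed points come from the scalar equation $g(x) := x - b\sin(2\pi x) \equiv 0 \pmod 1$. The hyperbolicity and index statements come from a characteristic polynomial that depends on the point only through one scalar $c$.

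\emph{Fixed points.} Writing $f_{k,b}(x,y,z)=(x,y,z)$ modulo $\bbZ^3$ gives three conditions. The third coordinate gives $z \equiv x$. The second gives $g(x) \equiv 0$. The first then gives $y \equiv (k-2)x$. Clearly $x=0$ works and yields $p_0$. On $(0,\tfrac12)$ we have
\[
g(0)=0,\qquad g'(0)=1-2\pi b<0,\qquad g(\tfrac12)=\tfrac12>0 .
\]
Moreover $g''(x)=4\pi^2 b\sin(2\pi x)>0$ there, so $g$ is strictly convex on $[0,\tfrac12]$. Hence $g$ has exactly one zero $x_0\in(0,\tfrac12)$, and $g'(x_0)>0$. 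Since $g$ is odd, $-x_0$ is also a zero, and this gives $q_0$ and $-q_0$. The points $p_0,q_0,-q_0$ are distinct in $\bbT^3$ because their $x$-coordinates $0,\pm x_0$ are distinct mod $1$.

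\emph{Linearization.} The derivative is
\[
Df_{k,b}=\begin{pmatrix} k&-1&-1\\ c&1&0\\ 1&0&0\end{pmatrix},\qquad c=1-2\pi b\cos(2\pi x)=g'(x).
\]
Expanding the determinant gives the characteristic polynomial
\[
P_c(\lambda)=\lambda^3-(k+1)\lambda^2+(k+1+c)\lambda-1 .
\]
So the spectrum at a point depends only on $c$, and $c$ coincides at $\pm q_0$ since $\cos$ is even. This justifies writing $Df_{k,b}(q_0)=Df_{k,b}(-q_0)$. Note that $\det=1$ always.

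\emph{Hyperbolicity criterion.} Suppose an eigenvalue lies on the unit circle. If it is non-real, its conjugate is also an eigenvalue, the product of the pair is $1$, and so the third (real) eigenvalue equals $1$. In every case $\pm1$ must then be a root. We have $P_c(1)=c$ and $P_c(-1)=-(2k+4+c)$. Hence the fixed point is hyperbolic iff $c\notin\{0,-2k-4\}$. At $q_0$ we have $c=g'(x_0)>0$. At $p_0$ we have $c=1-2\pi b<0$, and we also need $1-2\pi b>-2k-4$, i.e.\ $2\pi b<2k+5$. This holds for the parameters of interest ($k=16$, $b\approx1$). I expect the statement implicitly assumes it, and I would add this mild restriction explicitly.

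\emph{Unstable dimension at $q_0$.} The number of eigenvalues outside the unit disc is locally constant as $c$ varies in the interval $(0,\infty)$, because no eigenvalue crosses the unit circle there. At $c=1$ the matrix is $A_k$, which by \cite{PT2014} is Anosov with two-dimensional unstable bundle for $k\ge4$. Hence $q_0$ has unstable dimension two. For smaller $k$ one could instead check directly: $P_c(0)=-1<0$ and $P_c(1)=c>0$ give a root in $(0,1)$, and the remaining pair has product greater than $1$.

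\emph{Unstable dimension at $p_0$.} Here $c\in(-2k-4,0)$. Since $P_c(1)<0$ and $P_c\to+\infty$, there is a root $\lambda_1>1$. The other two eigenvalues have product $1/\lambda_1<1$. If they are complex, both have modulus $<1$. If they are real, they lie in $(-\infty,\lambda_1)$, with $P_c>0$ exactly between them. Since $P_c(-1)<0$ and $P_c(1)<0$, the pair lies either in $(-1,1)$, or below $-1$, or in $(1,\lambda_1)$. The last two options force the product to exceed $1$, a contradiction. So the unstable dimension is one.

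The main obstacle is the sign condition at $p_0$, namely keeping $c$ away from $-2k-4$. Everything else is elementary. I would first try to obtain it from the standing parameter range, and otherwise state it as a hypothesis.
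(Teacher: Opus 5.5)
Your argument is correct, and it takes a genuinely different route from the paper. The paper gives no analytical proof. It obtains $p_0$, $q_0$, their hyperbolicity and their indices from the computer-assisted check that a tiny box $T_0$ around each point satisfies $T_0 \fcovers T_0$ (for $f$ or $f^{-1}$), and this is done only for $k=16$, $b\in[0.9995,1.0005]$.

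Your reduction has two parts. First, the scalar function $g(x)=x-b\sin(2\pi x)$, with convexity giving a unique $x_0\in(0,\tfrac12)$. Second, the one-parameter characteristic polynomial $P_c$, with $P_c(1)=c$ and $P_c(-1)=-(2k+4+c)$. This proves the result for all $k$ and a whole range of $b$, with no computation, and it identifies the exact hyperbolicity locus $c\notin\{0,-2k-4\}$.

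What the paper's route buys instead:
\begin{itemize}
\item Nothing about the fixed points has to be known in advance.
\item The verified self-covering boxes are reused later: their local stable and unstable plaques feed the density argument, and the box around $q_0$ is checked to lie in $V_x\times S^1\times V_z$, which gives $q_0\in\Lambda$.
\item The conclusion is automatically $C^1$-robust.
\end{itemize}

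The restriction you flag at $p_0$ is not merely implicit; it is necessary. Suppose $2\pi b>2k+5$. Then $c=1-2\pi b<-2k-4$, so $P_c(-1)>0$, while $P_c(0)=-1<0$ and $P_c(1)=c<0$. Hence $P_c$ has roots in $(-\infty,-1)$, $(-1,0)$ and $(1,\infty)$, and $p_0$ has unstable index two. At $2\pi b=2k+5$, $p_0$ is not hyperbolic. So the proposition with only $b>1/(2\pi)$ is false for large $b$. Your added hypothesis $2\pi b<2k+5$ is the right correction, and it holds for $k=16$, $b\approx 1$.

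Two small points need tightening.
\begin{itemize}
\item In the index-one argument at $p_0$, take $\lambda_1$ to be the \emph{largest} real root, so that the other real roots lie below it. Alternatively, count roots by sign on $(-\infty,-1)$, $(-1,1)$ and $(1,\infty)$: the counts have parities even, even, odd, and three roots above $1$ would give product greater than $1$.
\item In your remark for small $k$, ``the remaining pair has product greater than $1$'' does not by itself force two real roots outside the unit disc. Add that for $c>0$ one has $P_c<0$ on $(-\infty,0]$, so all real roots are positive. Then note that $(0,1)$ contains an odd number of roots, and three roots there would have product less than $1$.
\end{itemize}
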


We do not offer an analytical proof of the above properties because they are proved within our method to prove robust transitivity. Additionally, the strength of our method is that we do not need to know any of this information \textit{a priori} to apply it.

The proof of \Cref{teoA} is carried out by the C++ program whose structure is summarized in this section. The program uses the \texttt{CAPD} library \cite{CAPD} for interval arithmetic. Considering $p_0$ and $q_0$ the fixed points given by \Cref{prop:DA-non-anosov}, the main four verification routines are:
\begin{enumerate}[label=(\Roman*)]
    \item \texttt{verifyHorseshoe}: establishes the hyperbolic transitive set $\Lambda \subof \bbT^3$ using the strong covering relation with $u = 2$ (\Cref{sec:part1});
    \item \texttt{verifyActivation}: builds and validates the blender $(\Lambda, \mathcal{F})$ and verifies that $W^u(p_0)$ activates it (\Cref{sec:part2});
    \item \texttt{verifyUnstable}: verifies the weak partially hyperbolic splitting $E^{cs} \oplus E^u$ and that every unstable segment intersects $W^s(p_0)$ (\Cref{sec:part3,sec:part4});
    \item \texttt{verifyStable}: verifies the weak partially hyperbolic splitting $E^s \oplus E^{cu}$ and that every stable segment intersects $W^u(q_0)$ (\Cref{sec:part3,sec:part4}).
\end{enumerate}

We use interval arithmetic, which is a formalization of operations with intervals that represent inequalities. We use the notation $\bm{I} = [a, b]$, with intervals in bold. We say that $a$ is the \textit{lower bound} and $b$ is the \textit{upper bound} for $\bm{I}$.

Since the interval $\bm{I}$ represents all real values $t$ satisfying $a \leq t \leq b$, if we consider another interval $\bm{J} = [c, d]$, then the sum $\bm{I} + \bm{J}$ should represent all the possible values of $t + s$ where $s$ satisfies $c \leq s \leq d$. Thus, $\bm{I} + \bm{J} = [a + c, b + d]$. Other operations for intervals can be deducted similarly using inequalities, and we refer to \cite[Chapter 2]{warwickBook} for further details.

When implemented with a programming language, the interval resulting from any mathematical operation takes into account rounding errors on its lower and upper bounds, providing then rigorous inequalities for the values involved. Therefore, interval arithmetic allows us to perform several millions of inequalities rigorously, being then a powerful tool for computer-assisted proofs.

We represent the three-dimensional boxes as vectors of intervals, and we implement the function $f = f_{k,b}$ as an interval map. This means that $[f(\bm{I})]$, $\bm{I} = (\bm{I_1}, \bm{I_2}, \bm{I_3})$, is a product of intervals containing the actual image $f(\bm{I})$, and we call it an \textit{interval enclosure of $f(\bm{I})$}.

Section \ref{sec:hset} relates our definitions from \Cref{sec:dynBox} with similar concepts in the literature, as well describe the construction of dynamical boxes and a criterion to check for covering relation between them (\Cref{lem:checkCov}). The sections that follow (\Cref{sec:part1,sec:part2,sec:part3,sec:part4}) describe each one of the four parts of the implementation.

\subsection{H-sets, covering relation and cone preservation}
\label{sec:hset}

The notion of strong dynamical box introduced in \Cref{sec:dynBox} is similar to related notions in the literature. In \cite[Definition 1]{MZ, MZ2}, the authors use compact sets homeomorphic to the product $\overline{B_u(0,1)} \times \overline{B_s(0,1)}$ of closed unit balls in $\mathbb{R}^u$ and $\mathbb{R}^s$, respectively, to check local properties of the system. These sets are defined below.

\begin{definition}
    \label{def:h-set}
    If $M$ is a Riemannian manifold, we say that a compact subset $X \subseteq M$ is an \textit{h-set} if there is a homeomorphism
    $$\varphi: \overline{B_u(0,1)} \times \overline{B_s(0,1)} \to X,$$
    where $u + s = \dim M$.

    Additionally, we define the \textit{u-boundary} and the \textit{s-boundary} of $X$ as $X^u = \varphi(\cube^u)$ and $X^s = \varphi(\cube^s)$, where $\cube^u = \partial(\overline{B_u(0,1)}) \times \overline{B_s(0,1)}$ and $\cube^s = \overline{B_u(0,1)} \times \partial(\overline{B_s(0,1)})$.    
\end{definition}

Our dynamical boxes are particular cases of h-sets. We use the maximum norm on $\mathbb{R}^u$ and $\mathbb{R}^s$, and then each h-set is homeomorphic to the cube $\cube = [-1, 1]^{\dim M}$. Apart from being the most natural way to implement these sets using interval arithmetic, this allows us to make the distinction between the expanding and contracting dimensions on each box depending on the context, which is more suitable for partially hyperbolic systems.

To fit a dynamical box into the definition of an h-set, we need to fix the dimension of the unstable and stable directions. But the box is the same, regardless of this choice. The only difference is the definition of the $u$-boundary and $s$-boundary.

\begin{notation} 
    A \textit{center-unstable dynamical box}, or a \textit{cu-dynamical box} is a dynamical box viewed as an h-set with $u = 2$ and $s = 1$. Analogously, a \textit{center-stable dynamical box}, or a \textit{cs-dynamical box} is a dynamical box viewed as an h-set with $u = 1$ and $s = 2$.

    The ``center-unstable'' in the above notation means that, in local coordinates, $\bbR^u$ represents the direction of the center-unstable manifold, that does not need to present expanding behavior at every point.
\end{notation}

A topological way to see ``hyperbolicity'' when iterating h-sets is given by the following notion.

\begin{definition}[\cite{MZ, MZ2}]
    \label{def:covering}
    Consider two h-sets $X, Y \subseteq M$ that are homeomorphic to $\cube = [-1,1]^u \times [-1,1]^s$ via invertible maps 
    \begin{align*}
        \alpha: &\, \cube \subset B(0, c_X) \to U_X \supset X \mbox{ and}\\
        \beta: &\, \cube \subset B(0, c_Y) \to U_Y \supset Y,
    \end{align*}
    where the balls $B(0, c_X)$ and $B(0, c_Y)$ belong to $\bbR^{u + s}$. Let $X^u/Y^u$ and $X^s/Y^s$ be their u and s-boundaries.
    
    Given $f: M \to M$, the \textit{covering relation} occurs (and we say that \textit{$X$ $f$-covers $Y$}) if there is a homotopy $H: [0,1] \times \cube \to \bbR^3$ such that 
    \begin{itemize}
        \item $H(0, \cdot) = \floc = \beta^{-1} \circ f \circ \alpha$ ($f$ in local coordinates, where $\beta^{-1}$ is well defined over $f(X)$ if $X$ is sufficiently small);
        \item $H(t,\cube^u) \cap \cube = \varnothing$ for all $t \in [0,1]$;
        \item $H(t, \cube) \cap \cube^s = \varnothing$ for all $t \in [0,1]$;
        \item $H(1, (x_u, x_s)) = (Ax_u,0)$, where $A: \bbR^u \to \bbR^u$ is a linear map with
        $$A(\partial([-1,1]^u)) \subseteq \bbR^u \setminus [-1,1]^u.$$
    \end{itemize}
\end{definition}

Note that the above definition is adapted to any manifold $M$, while most authors only define it for $\bbR^n$. Note as well that this notion of covering relation, unlike the strong and weak versions we define in \Cref{sec:main-statements}, is only topological, it does not involve cones. This definition means that the intersection between $f(X)$ and $Y$ looks like the one in Figure \ref{fig:covering}.

\begin{figure}[ht]
   \centering
    \def\svgwidth{7cm}
\begingroup%
  \makeatletter%
  \providecommand\color[2][]{%
    \errmessage{(Inkscape) Color is used for the text in Inkscape, but the package 'color.sty' is not loaded}%
    \renewcommand\color[2][]{}%
  }%
  \providecommand\transparent[1]{%
    \errmessage{(Inkscape) Transparency is used (non-zero) for the text in Inkscape, but the package 'transparent.sty' is not loaded}%
    \renewcommand\transparent[1]{}%
  }%
  \providecommand\rotatebox[2]{#2}%
  \newcommand*\fsize{\dimexpr\f@size pt\relax}%
  \newcommand*\lineheight[1]{\fontsize{\fsize}{#1\fsize}\selectfont}%
  \ifx\svgwidth\undefined%
    \setlength{\unitlength}{226.77165354bp}%
    \ifx\svgscale\undefined%
      \relax%
    \else%
      \setlength{\unitlength}{\unitlength * \real{\svgscale}}%
    \fi%
  \else%
    \setlength{\unitlength}{\svgwidth}%
  \fi%
  \global\let\svgwidth\undefined%
  \global\let\svgscale\undefined%
  \makeatother%
  \begin{picture}(1,0.625)%
    \lineheight{1}%
    \setlength\tabcolsep{0pt}%
    \put(0,0){\includegraphics[width=\unitlength,page=1]{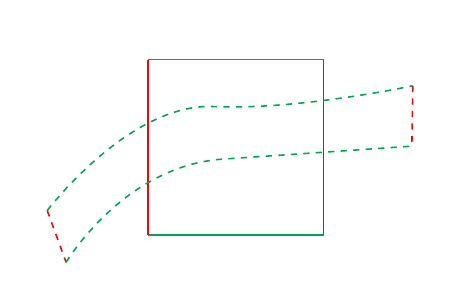}}%
    \put(0.43045468,0.50852667){\color[rgb]{0,0,0}\makebox(0,0)[lt]{\lineheight{1.25}\smash{\begin{tabular}[t]{l}$\textcolor{Green}{D^s}$\end{tabular}}}}%
    \put(0.68819599,0.17846994){\color[rgb]{0,0,0}\makebox(0,0)[lt]{\lineheight{1.25}\smash{\begin{tabular}[t]{l}$\textcolor{red}{D^u}$\end{tabular}}}}%
    \put(0.3169744,0.45045439){\color[rgb]{0,0,0}\makebox(0,0)[lt]{\lineheight{1.25}\smash{\begin{tabular}[t]{l}$D$\end{tabular}}}}%
    \put(0.70065587,0.35213372){\color[rgb]{0,0,0}\makebox(0,0)[lt]{\lineheight{1.25}\smash{\begin{tabular}[t]{l}$\floc(D)$\end{tabular}}}}%
  \end{picture}%
\endgroup%

    \caption{Covering relation in local coordinates.}
    \label{fig:covering}
\end{figure}

To have the full notion of hyperbolicity including the tangent space level, with contraction and expansion of vectors, we need not only the above covering relation, but a condition on cones. 

We recall that the \textit{dimension} of a cone is the dimension of the biggest subspace contained in it. If two h-sets with cone families have the same dimensions $u$ and $s$, we may ask if the cones are preserved under the derivative. In this sense, we have cone preservation as in \Cref{def:strongCov}.

The notion of u-plaque introduced in \Cref{def:u-plaque} is in general referred to as a \textit{horizontal disk} in \cite{MZ,MZ2,Zglic2009,CKOZ-2025}.

\begin{remark}
    \label{rem:covRel-implies-weakCov}
    Consider two weak dynamical boxes $X, Y$ as h-sets, and suppose their unstable cones $\Cone^u_X$ and $\Cone^u_Y$ are constant over $\cube = [-1, 1]^{u+s}$. If $(X, \Cone^u_X)$ and $(Y, \Cone^u_Y)$ satisfy the covering relation from \Cref{def:covering} with cone preservation given by
    \begin{equation}
        \label{eq:cone-preservation}
        [D\floc (\cube)](\Cone^u_X) \mbox{ is contained in the interior of } \Cone^u_Y,
    \end{equation}
    where $[D\floc (\cube)]$ is the interval enclosure of this derivative (which implies item (1) of \Cref{def:weakCov}), then we have a weak covering relation between the dynamical boxes $X$ and $Y$, meaning that we have item (2) of \Cref{def:weakCov}. This is a direct consequence of \cite[Lemma 30]{CKOZ-2025} and \cite[Theorem 7]{Zglic2009}, where the first takes the preservation of cones at tangent level given by \Cref{eq:cone-preservation} and brings it to the manifold level via the Mean Value Theorem, and the second uses the cone condition at the manifold level, together with the covering relation, to prove the preservation of u-plaques.
    
    For a proof of the strong covering relation using related notions in the literature, see \Cref{lem:covRel-quadCone-imply-strongCov}.
\end{remark}

\begin{remark}
    \label{rem:coding-references}
    In \cite[Lemma 9]{Zglic2009} the author proves that, given a finite sequence of covering relations (from \Cref{def:covering}) with a cone condition (at the manifold level) between $k+1$ h-sets that starts and ends on the same $(X_0, \Cone_{X_0})$, we have that the h-set $X_0$ contains a unique periodic point of period $k$ whose stable/unstable manifolds with respect to $f^k$ are given exactly by the points that stay in $X_0$ under forward/backward iterations of $f^k$. This is the first result in the literature, to the best of our knowledge, that implies the coding in \Cref{thm:coding}. But, for this coding to define a continuous semiconjugacy, one needs some expansiveness argument. That appears, for instance, in \cite[\S A.2]{CKOZ-2025}.
\end{remark}

\subsubsection{Cones}
\label{sec:cones}

In most places in the code, cones in $\bbR^3$ are generated by a rectangle of the form $(1, c_y, c_z)$ with $c_y, c_z$ being intervals,
so that a vector $(v_x, v_y, v_z)$ belongs to the cone if and only if $v_y/v_x \in c_y$ and $v_z/v_x \in c_z$.

\begin{figure}[ht]
   \centering
    \def\svgwidth{7cm}
    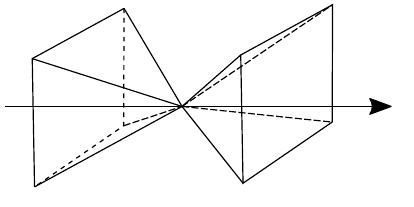
    \caption{Cone given by $\Cone(1) = \{1, [-1,1], [-1,1] \}$.}
    \label{fig:ucone}
\end{figure}

In local coordinates the expanding direction is always aligned with the first coordinate (even for the inverse, see \texttt{splitting} function, explained below, used to define the local coordinates). In particular, we use cones of the form 
\begin{equation} 
    \label{eq:cones}
        \Cone(a) = \{1, [-a, a], [-a, a] \}\\
\end{equation}
for $a \in (0, 1]$, see Figure \ref{fig:ucone}.

The cones given in \Cref{eq:cones} are used to prove partial hyperbolicity (\Cref{sec:part3}). They are also the cones used to verify the weak covering relation (see \Cref{rem:covRel-implies-weakCov}) for the activation of the blender (\Cref{sec:activation}) and for the dense manifold verifications (\Cref{sec:part4}).

If $Q$ is a quadratic form in $\bbR^n$, then it defines two cones with 

\begin{equation}
\label{eq:quadCones}
\begin{split}        
&\Cone^u = \{v \in \bbR^n \, : \, Q(v) \geq 0\}\\
&\Cone^s = \{v \in \bbR^n \, : \, Q(v) \leq 0\}.
\end{split}
\end{equation}

For the horseshoe verification (\Cref{sec:part1}), each seed box uses the cone associated with the quadratic form $Q = \operatorname{diag}\{1, 1, -1\}$. This is the cone we use to verify the strong covering relation (see \Cref{lem:covRel-quadCone-imply-strongCov}), which also appears in the verification of hyperbolic fixed points in \Cref{sec:activation} and in \Cref{sec:part4}.

\subsubsection{Constructing dynamical boxes}
\label{sec:coneBox}

Note that, if $f = f_{k,b}$ has its invariant splitting not ``aligned'' with the canonical basis of $\mathbb{R}^3$, then the interval enclosure $[f(\bm{I})]$ is bigger than the actual image of $\bm{I}$ under this map. To avoid the accumulation of overestimation (known as \textit{wrapping effect}), we use affine local coordinates ``aligned with the dynamics'' to define $\varphi$ for our dynamical boxes. More precisely, to represent computationally dynamical boxes and their good seeds, we use respectively \emph{cone boxes} and \emph{seed boxes}.

A \textit{cone box} is a pair $(\varphi, \Cone)$ where $\varphi = (A, p)$ is an affine map given by a matrix and a vector that define $\varphi : [-1,1]^3 \to \bbT^3$ as $\varphi(x) = Ax + p$, and $\Cone$ is a constant cone family defined in local coordinates as in \Cref{eq:cones}.

A \textit{seed box} is a cone box together with an axis-aligned box $\bm{s}$ (the \emph{seed}).
The seed represents the sets $\bm{s_i}$ from \Cref{prop:lam}, and the cone box is the larger dynamical box $B_i$ that covers it.

The affine map $\varphi$ is constructed by an approximation of the partially hyperbolic splitting computed by the \texttt{splitting} routine. By using the Singular Value Decomposition (SVD) for a matrix representing the derivative $Df^k$ at a point $p$,  we can find (an approximation of) the unit vector in $\bbR^3$ which is contracted the most by $Df^k$ at this point. This vector closely approximates the stable direction of the splitting at that point. Similarly, we can calculate the SVD for a matrix representing the derivative of $f^{-k}$ to find a unit vector approximating the unstable direction.

The SVD for $Df^k$ also gives a unit vector $v^u = v_1$ corresponding to the direction of greatest expansion. This is not necessarily a good approximation of the unstable direction, but it is a good approximation of the normal vector to the center-stable plane $E^{cs}(p)$. A similar column vector $v^s = \tilde{v_1}$ coming from the SVD of $Df^{-k}$ approximates the normal vector to the center-unstable plane $E^{cu}(p)$, and the cross product $v^c = v_1 \times \tilde v_1$ of these two vectors gives a close approximation to the center direction. In these calculations, we found that using $k=2$ gives a close approximation for all three directions of the partially hyperbolic splitting, and so we use $k = 2$ in the C++ code.

The columns of $A$ are then given by the vectors described above, $A = [v^u \mid v^c \mid v^s]$, scaled by a factor specified in \texttt{Settings} that defines the scale of the dynamical box.

We remark that this splitting does not need to be rigorous, its validation is part of the proof of partial hyperbolicity using cones in \Cref{sec:part3}.

Given two cone boxes, the program evaluates the \emph{local map} $\floc = A_2^{-1} \circ f \circ A_1$ between them (up to a lattice translation on the 3-torus) and encloses its derivative over a box using interval arithmetic. The use of local coordinates allows us to avoid the wrapping effect.

\subsubsection{Checking the covering relation}
\label{sec:checkCov}

As in \cite[Lemma 29]{CKOZ-2025}, we prove a criterion for the covering relation that can be easily implemented for a two-dimensional direction of expansion. In our setting, we could use the inverse and check for the covering relation with one-dimensional direction of expansion, but the weak contraction for the inverse at the center direction is harder to validate.

To formulate the hypotheses, given two dynamical boxes $B_1$ and $B_2$, we split the unstable boundary $\cube^u = \partial^u\cube = \partial([-1,1]^2) \times [-1,1]$ of $B_1$ in local coordinates into four components
    \begin{align*}
        U_l &= \{-1\} \times [-1,1] \times [-1,1],\\
        U_r &= \{1\} \times [-1,1] \times [-1,1],\\
        C_l &= [-1,1] \times \{-1\} \times [-1,1],\\
        C_r &= [-1,1] \times \{1\} \times [-1,1],
    \end{align*}
as in \Cref{fig:lem-checkCov}. Additionally, consider $\floc$ as $f$ in local coordinates, and $$Z = [-1, 1]^2 \times (\bbR \setminus [-1, 1]).$$

\begin{figure}[ht]
    \def\svgwidth{13cm}
    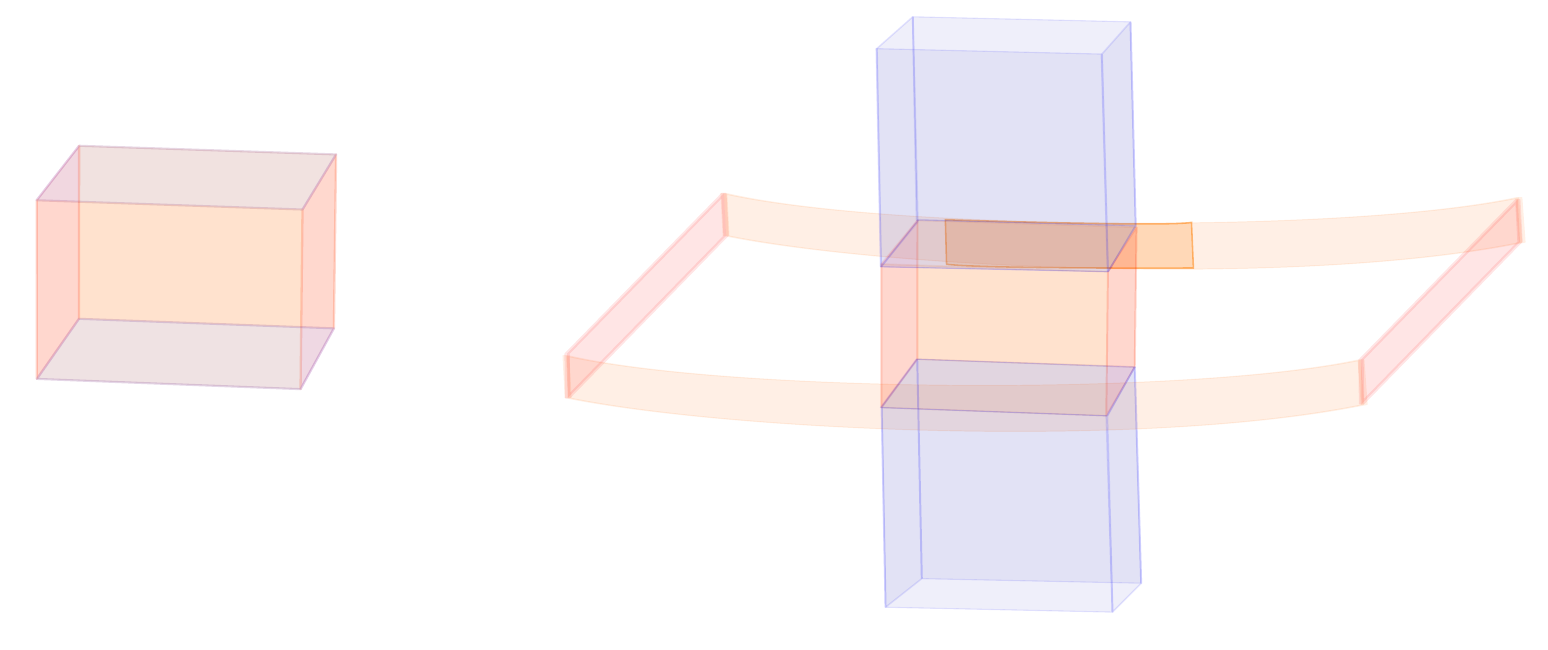
    \centering
    \caption{$\cube^u = \partial^u\cube = \partial([-1,1]^2) \times [-1,1] = \textcolor{red}{U_l} \cup \textcolor{red}{U_r} \cup \textcolor{orange}{C_l} \cup \textcolor{orange}{C_r}$ and $\textcolor{blue}{Z} = [-1, 1]^2 \times (\bbR \setminus [-1, 1])$. The conditions (2) and (3) in \Cref{lem:checkCov} are represented in the relative position of the images of the faces of $\cube^u$.}
    \label{fig:lem-checkCov}
\end{figure}

Recalling \Cref{def:covering}, we want conditions that are easily verifiable with inequalities and essentially imply that: the image of $\cube^u$ under $\floc$ is outside $\cube$, and the image of $\cube$ under $\floc$ does not intersect $\cube^s$. These conditions are expressed in the following result.

\begin{lemma}
    \label{lem:checkCov}
    Consider $B_1$ and $B_2$ center-unstable dynamical boxes for $f$ with $U_l$, $U_r$, $C_l$, $C_r$ and $Z$ defined as above. Consider also the projections $\pi_x: \bbR^3 \to \bbR$, $\pi_y: \bbR^3 \to \bbR$ to the first and second coordinates.
    
    Suppose that
    \begin{enumerate}
        \item $\floc(\cube) \cap Z = \varnothing$;
        \item \leavevmode\vspace{-\baselineskip} \begin{align*} 
            \pi_x(\floc(U_l)) < -1 &\quad \text{and} \quad\pi_x(\floc(U_r)) > 1 \mbox{ or}\\\pi_x(\floc(U_l)) > 1 &\quad \text{and} \quad \pi_x(\floc(U_r)) < -1;
        \end{align*}
    
        \item \leavevmode\vspace{-\baselineskip} \begin{align*}
            \pi_y(f_{C_l}) < -1 &\quad \text{and} \quad\pi_y(f_{C_r}) > 1 \mbox{ or}\\
            \pi_y(f_{C_l}) > 1 &\quad \text{and} \quad \pi_y(f_{C_r}) < -1,
        \end{align*}
        where $f_{C_l} = \floc(C_l) \cap ([-1,1] \times \mathbb{R}^2)$ and $f_{C_r} = \floc(C_r) \cap ([-1,1] \times \mathbb{R}^2)$.
    \end{enumerate}

    Then $B_1$ $f$-covers $B_2.$
\end{lemma}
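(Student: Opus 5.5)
We will construct the homotopy $H$ required by \Cref{def:covering} explicitly, with $u = 2$, $s = 1$. Write $\floc = (F_1, F_2, F_3)$. By conditions (2) and (3), there are signs $\epsilon_1, \epsilon_2 \in \{\pm 1\}$ recording which alternative holds for the $x$- and $y$-directions. Take the linear map $A = \operatorname{diag}(2\epsilon_1, 2\epsilon_2)$, which sends $\partial([-1,1]^2)$ outside $[-1,1]^2$.

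The homotopy will be done in stages.

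\textbf{Stage 1: flatten the stable coordinate.} For $t \in [0,1]$ set
\[
H_1(t,p) = \bigl(F_1(p), F_2(p), (1-t)F_3(p)\bigr).
\]
By condition (1), whenever $(F_1,F_2) \in [-1,1]^2$ we have $|F_3| \le 1$, and hence $|(1-t)F_3| \le 1$ as well. Consequently, for points whose image has $(F_1,F_2)\in[-1,1]^2$, the image never reaches $\cube^s$, where $|z| = 1$. Points with $(F_1,F_2) \notin [-1,1]^2$ are not in $\cube$ at all. So $H_1(t,\cube) \cap \cube^s = \varnothing$.

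On the faces $U_l, U_r$, the first coordinate is unchanged and lies outside $[-1,1]$, so their images stay outside $\cube$. On $C_l, C_r$, a point either has $|F_1| > 1$, hence is outside $\cube$, or it lies in $f_{C_l}$ or $f_{C_r}$, where $|F_2| > 1$ by condition (3). Either way it stays outside $\cube$, so $H_1(t,\cube^u) \cap \cube = \varnothing$.

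\textbf{Stage 2: deform the $(x,y)$ part to $A x_u$.} We now deform $(F_1, F_2)(x,y,z)$ to $(2\epsilon_1 x, 2\epsilon_2 y)$ while keeping the third coordinate $0$. The $s$-boundary condition then holds automatically, since the third coordinate is $0$. What remains is to keep the image of $\cube^u$ off $[-1,1]^2 \times \{0\}$.

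I would proceed coordinatewise. First straight-line homotope $F_1$ to $2\epsilon_1 x$. On $U_l \cup U_r$, both endpoints lie on the same side beyond $\pm1$, with sign $\epsilon_1 x$, so convexity keeps them outside. On $C_l \cup C_r$, however, the first coordinate may pass through $[-1,1]$ during the homotopy. There we need $F_2$ to be controlled, but condition (3) only controls $F_2$ where $|F_1| \le 1$.

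\textbf{Main obstacle.} The key difficulty is precisely this coupling on $C_l \cup C_r$. My plan is to reorder the homotopy:
\begin{itemize}
\item First deform the second coordinate on $C_l \cup C_r$ only where $|F_1| \le 1$, using a cutoff. Alternatively, first replace $F_2$ by $G_2 = F_2 + \lambda(F_1)\,\bigl(2\epsilon_2 y - F_2\bigr)$, with $\lambda$ a bump equal to $1$ near $[-1,1]$ and vanishing for $|F_1| \ge 1+\delta$. This is legitimate because, by continuity and compactness, the set where $|F_1| \le 1 + \delta$ on $C_l \cup C_r$ still has $\epsilon_2 y F_2 > 1$ for small $\delta$. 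Along the straight line between $F_2$ and $2\epsilon_2 y$, we then keep $|\cdot| > 1$ with the correct sign.
\item After this, the second coordinate on $C_l \cup C_r$ is outside $[-1,1]$ whenever $|F_1| \le 1+\delta$. Now homotope the first coordinate to $2\epsilon_1 x$ linearly. On $C_l \cup C_r$, the image is outside $\cube$ whenever $|\text{first}| \le 1$; but the first coordinate during the homotopy could enter $[-1,1]$ at points where $|F_1| > 1 + \delta$.
\end{itemize}
To fix this, I would instead first homotope, on the whole cube, the pair $(F_1,F_2)$ to $(F_1, 2\epsilon_2 y)$ only after ensuring $y$-control. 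Alternatively, I would use the simpler fact that the degree of the map
\[
(\partial[-1,1]^2 \times [-1,1],\, \cdot) \to \bbR^2 \setminus [-1,1]^2
\]
on the boundary determines the homotopy class. By conditions (2)–(3), the boundary map winds like $A$ (edges mapped to the correct half-planes in cyclic order). Hence it is homotopic in $\bbR^2 \setminus [-1,1]^2$ to $A$, and this homotopy extends to the interior by Tietze extension (Stage 1 already handles the $z$-part).

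This degree/winding argument, matching each of the four faces to the correct complementary half-plane, is the step I expect to require the most care, particularly at the corners where $U$- and $C$-faces meet.
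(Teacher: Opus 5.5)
Your final route is the paper's. Stage 1 is exactly the paper's $H_1$, and what remains is, as in the paper, to show that the $(x,y)$-projection of $\floc(\cube^u)$ winds around $[-1,1]^2$ and then deform it to the model map. Two of your choices improve on the paper:
\begin{itemize}
\item You take $A=\operatorname{diag}(2\epsilon_1,2\epsilon_2)$. The paper's fixed $\operatorname{diag}(2,2)$ has the wrong degree when $\epsilon_1\epsilon_2=-1$.
\item You extend a homotopy of $\cube^u$ inside $\bbR^2\setminus[-1,1]^2$ to all of $[0,1]\times\cube$ by Tietze. This is legitimate: the prescribed map on the closed set $\{0,1\}\times\cube\cup[0,1]\times\cube^u$ is continuous, and the $s$-boundary condition is automatic once the third coordinate is $0$. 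By contrast, the paper's straight-line $H_2$ need not keep $\cube^u$ off $[-1,1]^2$. On the $C$-faces, condition (3) imposes nothing where $|X|>1$, so the segment from $\floc(p)$ to $Ap$ can cross the square.
\end{itemize}

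The gap is that the step carrying all the content, ``the boundary map winds like $A$'', is only asserted, and your justification is not right: the $C$-faces are not mapped into half-planes. Condition (3) only says that $\floc(C_r)$ avoids the closed half-strip $S_r=\{|X|\le 1,\ \epsilon_2 Y\le 1\}$ and that $\floc(C_l)$ avoids $S_l=\{|X|\le 1,\ \epsilon_2 Y\ge -1\}$. Where $|X|>1$, these arcs may go anywhere.

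To close the gap, work arc by arc on the loop $\gamma=(F_1,F_2)|_{\partial[-1,1]^2\times\{0\}}$:
\begin{itemize}
\item Join each corner image to the corresponding corner of $A(\partial[-1,1]^2)$ by a segment in the half-plane $\{\epsilon_1X>1\}$ or $\{\epsilon_1X<-1\}$ given by (2).
\item With these segments attached, each $U$-arc is homotopic rel endpoints to the matching arc of $A(\partial[-1,1]^2)$ inside its convex half-plane.
\item The $C_r$-arc (resp.\ $C_l$-arc) is homotopic rel endpoints to its counterpart inside $\bbR^2\setminus S_r$ (resp.\ $\bbR^2\setminus S_l$). This set is simply connected, contains the corner segments, and lies in $\bbR^2\setminus[-1,1]^2$.
\end{itemize}
This gives $\gamma\simeq A|_{\partial[-1,1]^2}$ in $\bbR^2\setminus[-1,1]^2$, so both have degree $\epsilon_1\epsilon_2$. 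The homotopy extends to $\cube^u$ because $\cube^u$ deformation retracts onto $\{z=0\}$. It also settles your concern about the corners, since the corner segments lie in the $U$-half-planes, which miss both $S_r$ and $S_l$.

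Separately, in Stage 1, $|(1-t)F_3|\le 1$ does not give ``never reaches $\cube^s$'' at $t=0$. Condition (1) as written allows $|F_3|=1$ with $(F_1,F_2)\in[-1,1]^2$, so you need $|F_3|<1$ there. The paper's proof has the same looseness.
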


\begin{proof}

    Given $(x, y, z) \in \cube$, we denote its image as $\floc(x, y, z) = (\overline{x}, \overline{y}, \overline{z})$. Condition (1) implies that we can define the following homotopy:
    
    \begin{alignat*}{3}
        &H_1: && \, [0,1] \times \cube &&\longrightarrow \bbR^3 \\
        & &&(t, (x, y, z)) &&\longmapsto (\pi_{(x,y)}(\overline{x}, \overline{y}, \overline{z}), (1-t) \pi_z(\overline{x}, \overline{y}, \overline{z})),
    \end{alignat*}
    where $\pi_{(x,y)}$ is the projection at the first two coordinates. This homotopy essentially ``squeezes'' the image of $\cube$ on the $z$ direction until it is centralized at $\overline{z} = 0$.

    After applying $H_1$, we claim that we only need $\pi_{(x,y)}(H_1(1, \cube^u))$ to contain $[-1,1]^2$ inside the region it bounds. Indeed, if this happens, then $H_1(1, \cube^u)$ is homotopic to $[-2, 2] \times \{0\}$ via the homotopy $H_2$ defined as 
    
    \begin{alignat*}{3}
        &H_2: && \, [0,1] \times \cube &&\longrightarrow \bbR^3 \\
        & &&(t, (x, y, z)) &&\longmapsto (t A(x, y) + (1-t) \pi_{(x,y)}(\overline{x}, \overline{y}, \overline{z})), 0),
    \end{alignat*}
    where $A = \operatorname{diag}\{2, 2\}$, and $\pi_{(x,y)}(H_2(t, \cube^u))$ contains $[-1,1]^2$ for all $t \in [0,1]$. Then, the composition of $H_1$ and $H_2$ is a homotopy $H$ satisfying the definition of covering relation.
    
    Now, let us prove that conditions (2), and (3) from the lemma imply that $\pi_{(x,y)}(H_1(1, \cube^u))$ contains $[-1,1]^2$ inside the region it bounds. This region is homotopic to a circle. It has a bound on its perimeter that depends only on $f$ and it bounds a compact disk in $\bbR^2$, since $\floc(\cube))$ is diffeomorphic to a cube. 
    
    Note that $\cube^u = \partial^u\cube = \partial([-1,1]^2) \times [-1,1]$ is the union of $U_l$, $U_r$, $C_l$ and $C_r$, then $\pi_{(x,y)}(H_1(1, \cube^u))$ is a union of curves $\gamma^u_l = \pi_{(x,y)}(H_1(1, U_l))$, $\gamma^u_r = \pi_{(x,y)}(H_1(1, U_r))$, $\gamma^c_l = \pi_{(x,y)}(H_1(1, C_l))$, $\gamma^c_r = \pi_{(x,y)}(H_1(1, C_r))$.

    \begin{figure}[ht]
        \def\svgwidth{8cm}
\begingroup%
  \makeatletter%
  \providecommand\color[2][]{%
    \errmessage{(Inkscape) Color is used for the text in Inkscape, but the package 'color.sty' is not loaded}%
    \renewcommand\color[2][]{}%
  }%
  \providecommand\transparent[1]{%
    \errmessage{(Inkscape) Transparency is used (non-zero) for the text in Inkscape, but the package 'transparent.sty' is not loaded}%
    \renewcommand\transparent[1]{}%
  }%
  \providecommand\rotatebox[2]{#2}%
  \newcommand*\fsize{\dimexpr\f@size pt\relax}%
  \newcommand*\lineheight[1]{\fontsize{\fsize}{#1\fsize}\selectfont}%
  \ifx\svgwidth\undefined%
    \setlength{\unitlength}{226.77165354bp}%
    \ifx\svgscale\undefined%
      \relax%
    \else%
      \setlength{\unitlength}{\unitlength * \real{\svgscale}}%
    \fi%
  \else%
    \setlength{\unitlength}{\svgwidth}%
  \fi%
  \global\let\svgwidth\undefined%
  \global\let\svgscale\undefined%
  \makeatother%
  \begin{picture}(1,0.625)%
    \lineheight{1}%
    \setlength\tabcolsep{0pt}%
    \put(0,0){\includegraphics[width=\unitlength,page=1]{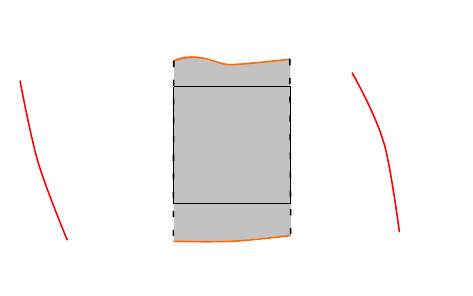}}%
    \put(0.37640479,0.52345749){\makebox(0,0)[lt]{\lineheight{1.25}\smash{\begin{tabular}[t]{l}$\textcolor{orange}{\gamma^c_r}$\end{tabular}}}}%
    \put(0.50563445,0.07635412){\makebox(0,0)[lt]{\lineheight{1.25}\smash{\begin{tabular}[t]{l}$\textcolor{orange}{\gamma^c_l}$\end{tabular}}}}%
    \put(0.77532158,0.44707748){\makebox(0,0)[lt]{\lineheight{1.25}\smash{\begin{tabular}[t]{l}$\textcolor{red}{\gamma^u_l}$\end{tabular}}}}%
    \put(0.05483266,0.42126596){\makebox(0,0)[lt]{\lineheight{1.25}\smash{\begin{tabular}[t]{l}$\textcolor{red}{\gamma^u_r}$\end{tabular}}}}%
  \end{picture}%
\endgroup%

       \centering
       \caption{$\pi_{(x,y)}(H_1(1, \cube^u))$ is homotopic to a circle containing these four curves and that do not intersect the region $G$ (in gray).}
       \label{fig:lem-cov}
    \end{figure}
    
    Condition (2) implies that, among $\gamma^u_l$ and $\gamma^u_r$, one of them lies to the right of $[-1, 1]^2$ and the other lies to the left. Condition (3) implies that $\gamma^c_l$ and $\gamma^c_r$ do not intersect $[-1,1]^2$. Taking into consideration that all these curves have bounded length, they are homotopic to \Cref{fig:lem-cov}, with a region $G$ containing $[-1,1]^2$ that the circle does not intersect (since $\gamma^c_{l/r}$ can only intersect $[-1,1] \times (1, \infty)$ or $[-1,1] \times (-\infty, 1)$ a finite number of times), and it only remains to connect these curves to form a circle.
    
    Since this circle is homotopic to $\pi_{(x,y)}(H_1(1, \cube^u))$, $\gamma^u_l$ and $\gamma^u_r$ each should have one end connected to $\gamma^c_l$ and one connected to $\gamma^c_r$. It is easy to see that, if we avoid the region $G$, there are only four ways to do that, and they all form circles containing $[-1,1]^2$.
\end{proof}

\begin{remark}
    The above lemma can be applied in dimension $n$ for a pair of h-sets with $u = 2$ and $s = n-2$.
\end{remark}

To check condition (1) for \Cref{lem:checkCov}, we can simply check that the whole image of $\cube$ has its $z$-coordinate inside $[-1 , 1]$, by checking $\pi_z(\floc(\cube)) \subseteq [-1, 1]$. A simple check condition for condition (3), similarly to condition (2), is to check that
\begin{align*}
    \pi_y(\floc(C_l)) < -1 &\quad \text{and} \quad \pi_y(\floc(C_r)) > 1 \mbox{ or}\\
    \pi_y(\floc(C_l)) > 1 &\quad \text{and} \quad \pi_y(\floc(C_r)) < -1.
\end{align*}
However, since we have weak expansion in the center direction, this may fail to happen even if a covering relation holds.

The function that checks the conditions of \Cref{lem:checkCov} in our implementation is \texttt{surfaceCovering}, while the covering relation with $u = 1$ is checked in the \texttt{curveCovering} function, both described briefly below.

For \texttt{curveCovering}, we need the projection of the image of the $x$-coordinate to strictly contain $[-1, 1]$, which is checked with Condition (2), while the $y$ and $z$-coordinates should only be checked to be inside this interval for points with $x$-coordinate in $[-1, 1]$ (see \cite[Lemma 29]{CKOZ-2025}). We then restrict to a subset of $\cube$ by computing a ``\textit{subcube}'', with the function \texttt{restrictToPreimage}, that is an enclosure $\bm{E}$ of all points $v \in \cube$ such that $\pi_x(\floc(v)) \in [-1, 1]$. We solve a linearized equation to check that the $y$ and $z$-coordinates of the image of $\bm{E}$ in local coordinates are both inside $[-1, 1]$.

For \texttt{surfaceCovering}, the check for Conditions (2) and (3) is translated into checking that points in $\cube^u$ are mapped outside $\cube$ as follows: we restrict to the subcube $\bm{E}$ as before, then if $\pi_x(\bm{E}) \subsetneq [-1, 1]$, the $x$-coordinate is mapped outside $\cube$. For points $v \in \bm{E}$ with $\vert \pi_y(v) \vert = 1$, we solve a linearized equation to check that the $x$ and $y$-coordinates of the image of $v$ in local coordinates are both outside $[-1, 1]$.
Then we verify that $\pi_z(\floc(\bm{E})) \subseteq [-1, 1]$ as in the \texttt{curveCovering} function. Further details are given as comments in the C++ code.

\subsection{Hyperbolicity and transitivity}
\label{sec:part1}

We need to find $\Lambda$ the transitive hyperbolic set to be the candidate for the blender. Before we describe the implementation, we introduce a result by \cite{Wilczak2010} to check if an invariant set is uniformly hyperbolic.

The notion of \textit{strong hyperbolicity} that follows implies, for cones defined using a particular quadratic form, cone preservation (\cite[Lemma 2.4]{Wilczak2010}) and expansion and contraction of vectors on unstable cones and stable cones respectively (\cite[Lemma 2.7, Lemma 2.8]{Wilczak2010}). 

Consider the matrix
$$Q = \operatorname{diag}\{ \underbrace{1, \cdots, 1}_{u}, \underbrace{-1, \cdots, -1}_{s} \},$$
where $u + s = n$. It defines the quadratic form in $\bbR^n$ given by
$$Q(v) = v^TQv = \Vert v_u \Vert^2 - \Vert v_s \Vert^2,$$
where $v = (v_u, v_s)$, and the cones given by this quadratic form are as in \Cref{eq:quadCones}.

\begin{definition}
    \label{def:strongHyp}
    We say that $f: M \to M$ is \textit{strongly hyperbolic} on $U \subseteq M$ with dimensions $u$ and $s$ (where $u+s = n = \dim M$), if $U =  \cup_i X_i$, with the family of h-sets $\mathcal{X} = \{(X_i, \varphi_i)\}_{i \in \mathcal{I}}$ satisfying:
    \begin{enumerate}
        \item $\varphi_i: B^u(0,1) \times B^s(0,1) \to B_i$ is an affine map, with $\varphi_i(x) = A_i(x) + p_i$, and
        \item the matrix
        \begin{equation}
            \label{eq:def-pos}
            [A_j Df(p) A_i^{-1}]^TQ[A_j Df(p) A_i^{-1}] - Q
        \end{equation}
        is positive definite for all $i, j$ such that $f(X_i) \cap X_j \neq \varnothing$ and all $p \in X_i$.
    \end{enumerate}
\end{definition}

The condition given by the matrix in \Cref{eq:def-pos} being positive definite implies that the cone $\Cone^u$ is preserved under the derivative of $f$ and the cone $\Cone^s$ is preserved under the derivative of $f^{-1}$, both in local coordinates. In fact, this is true even for a weaker condition. If, instead, we have that there is $m > 0$ such that 
        \begin{equation}
            \label{eq:def-pos-m}
            [A_j Df(p) A_i^{-1}]^TQ[A_j Df(p) A_i^{-1}] - mQ
        \end{equation}
is positive definite for all $i, j$ such that $f(X_i) \cap X_j \neq \varnothing$ and all $p \in X_i$, then we have the same cone preservation (see the proof for $m = 1$ in \cite[Lemma 2.4]{Wilczak2010}, the general case being proved the same way). For the expansion and contraction of vectors inside these cones, however, we need $m=1$.

The hyperbolicity is simply an application of the following result, stated here for our setting (see \Cref{rem:quotient-problem}).

\begin{theorem}{\cite[Theorem 2.3]{Wilczak2010}}
    \label{teo:strongHyp}
    If $f: M \to M$ is strongly hyperbolic on $U$, then it is uniformly hyperbolic on $\Lambda = \cap_{n \in \mathbb{Z}} f^n(U)$.
\end{theorem}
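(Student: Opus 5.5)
We argue in three stages: build invariant cone fields on a neighbourhood of $\Lambda$ from the strong hyperbolicity hypothesis, show these cones are strictly invariant with uniform expansion/contraction, and then extract the splitting by nested intersection. In the local coordinates of each h-set $X_i$ we take the cones $\Cone^u = \{Q(v) \ge 0\}$ and $\Cone^s = \{Q(v) \le 0\}$, with $Q = \operatorname{diag}\{1,\dots,1,-1,\dots,-1\}$. These are pulled back to $TM$ via $D\varphi_i = A_i$. At a point lying in several $X_i$ we keep the cone attached to a chosen index $i$, recorded along the orbit. Concretely, for $x \in \Lambda$ we fix a bi-infinite itinerary $\{i_n\}$ with $f^n(x) \in X_{i_n}$; it exists since $x \in \bigcap_n f^n(U)$ and $U = \bigcup_i X_i$. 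All subsequent work is done along this itinerary.

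The key algebraic step concerns the matrix $L = A_j Df(p) A_i^{-1}$ written in local coordinates, for which $L^T Q L - Q$ is positive definite. We write $Q(Lv) - Q(v) = v^T(L^TQL - Q)v \ge \varepsilon \|v\|^2$, with $\varepsilon > 0$ uniform by compactness of the finitely many $X_i$ and continuity of $Df$. This inequality gives two things.

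\emph{Strict cone invariance.} If $Q(v) \ge 0$ and $v \ne 0$, then $Q(Lv) \ge \varepsilon \|v\|^2 > 0$, so $L$ maps $\Cone^u$ into the interior of $\Cone^u$. Symmetrically, $L^{-1}$ maps $\Cone^s$ into the interior of $\Cone^s$.

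\emph{Expansion in cones.} On $\Cone^u$ we have $\|v_u\|^2 \ge \|v\|^2/2$. Hence $Q(Lv) \ge Q(v) + \varepsilon\|v\|^2 \ge (1+\varepsilon)Q(v)$, and more usefully $\|(Lv)_u\|^2 \ge Q(Lv) \ge \|v_u\|^2 - \|v_s\|^2 + \varepsilon \|v\|^2$. Iterating, $Q(L_n\cdots L_1 v)$ grows at least geometrically, because each step adds $\varepsilon\|w\|^2 \ge \varepsilon Q(w)$. Since $\|w\|^2 \ge Q(w)$, the norm $\|L_n\cdots L_1 v\|$ grows at least like $(1+\varepsilon)^{n/2}$ times $\sqrt{Q(v)}$. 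The same argument for $L^{-1}$ on $\Cone^s$, using $-Q$, gives backward expansion. This is the content of \cite[Lemmas 2.4, 2.7, 2.8]{Wilczak2010}, which we may invoke.

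Next we define $E^u(x) = \bigcap_{n\ge0} Df^n(\Cone^u(f^{-n}x))$ and $E^s(x) = \bigcap_{n \ge 0} Df^{-n}(\Cone^s(f^n x))$, using the local cones along the itinerary. Strict invariance together with uniform $\varepsilon$ gives a uniform contraction of the cones' projective aperture. Hence the nested intersections are linear subspaces of dimensions $u$ and $s$. They are transverse, since $\Cone^u \cap \Cone^s = \{Q = 0\}$ and the intersected cones lie strictly inside. They are $Df$-invariant by construction, and by the growth estimates above they are uniformly expanded and contracted.

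\textbf{Main obstacle.} The hardest step is independence of the itinerary, since points may lie in overlapping $X_i$. We handle this as follows. A vector $v$ lies in $E^u(x)$ if and only if $\|Df^{-n}v\| \to 0$ exponentially. This characterization is intrinsic to $f$ and does not refer to charts; likewise for $E^s$ with forward iterates. Thus any two itineraries yield the same subspaces. Continuity of the splitting then follows from the uniform cone estimates by the standard limit argument. Together these give uniform hyperbolicity of $\Lambda$.
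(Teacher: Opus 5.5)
The paper gives no proof of its own here. It imports \cite[Theorem 2.3]{Wilczak2010}, which rests on \cite[Lemmas 2.4, 2.7, 2.8]{Wilczak2010} for cone invariance and expansion. Because Wilczak works in $\bbR^n$, the paper adds \Cref{rem:quotient-problem}, restricting to boxes that are small relative to a fundamental domain.

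Your proposal is a correct, self-contained reconstruction of the cone-field argument behind that citation, and it gives you two things the citation hides. First, it only uses the linear maps $A_j Df(p) A_i^{-1}$ along orbits in $\Lambda$, never $f$ in local coordinates. So it applies directly on $\bbT^3$ with no smallness proviso. Second, you describe $E^u(x)$ (resp.\ $E^s(x)$) intrinsically as the vectors whose backward (resp.\ forward) iterates decay exponentially. This is exactly what makes the splitting independent of the itinerary when h-sets overlap, and hence genuinely $Df$-invariant. You are also right to use finiteness of the family $\{X_i\}$, which the paper's definition leaves implicit, to make $\varepsilon$ uniform.

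The one step to tighten is ``uniform contraction of the projective aperture, hence the intersections are subspaces''. For $u \ge 2$ this does not follow immediately from $L^TQL - Q > 0$. Also, your bound $(1+\varepsilon)^{n/2}\sqrt{Q(v)}$ degenerates on $\partial\Cone^u$. Both are fixed with what you already have:
\begin{enumerate}
\item One step gives $Q(L_1 w) \ge \varepsilon \|w\|^2$. Hence $\|L_n \cdots L_1 w\| \ge \sqrt{\varepsilon}\,(1+\varepsilon)^{(n-1)/2}\|w\|$ for every $w \in \Cone^u$.
\item The nested intersection $I = \bigcap_n Df^n\Cone^u(f^{-n}x)$ contains a $u$-plane. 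It is a Grassmannian limit of the planes $Df^n(\bbR^u \times \{0\})$, using that the cones are closed and nested.
\item By the bound in the first step, every $v \in I$ has $\|Df^{-n}v\| \to 0$. The set $W$ of all such $v$ is a linear subspace.
\item $W$ meets $\Cone^s(x)$ only at $0$, because nonzero vectors there have growing backward iterates. Hence $\dim W \le u$, which forces $I = W$ to be a $u$-dimensional subspace.
\end{enumerate}
This yields your intrinsic characterization directly, so itinerary independence comes for free rather than after the splitting is built.
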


In particular, the cone preservation with the quadratic form expressed in the strong hyperbolicity condition allows us to prove the strong covering relation as in \Cref{def:strongCov}.

\begin{lemma}
    \label{lem:covRel-quadCone-imply-strongCov}
    If a pair of h-sets $X$ and $Y$ satisfy $X$ $f$-covers $Y$ and $X \cup Y$ is strongly hyperbolic, then the strong covering relation $X \fcovers Y$ holds for the cones defined by the quadratic form $Q$.
\end{lemma}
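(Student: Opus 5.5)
We need to verify the three items of \Cref{def:strongCov} for the boxes $X$ and $Y$, with $\Cone^u = \{Q \ge 0\}$ and $\Cone^s = \{Q \le 0\}$ in local coordinates. Here $Q = \operatorname{diag}\{1,\dots,1,-1,\dots,-1\}$, and the charts are the affine maps $\varphi_X(v) = A_X v + p_X$ and $\varphi_Y(v) = A_Y v + p_Y$ from \Cref{def:strongHyp}. We also have to check the requirements of \Cref{def:strongDynBox}: $\Cone^u$ contains $\bbR^u \times \{0\}$ and is transverse to the vertical fibres, $\Cone^s$ contains $\{0\}\times\bbR^s$ and is transverse to the horizontal fibres, and vectors in the cones are expanded or contracted. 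All of this is immediate from the shape of $Q$ together with \cite[Lemma 2.7, Lemma 2.8]{Wilczak2010}, since strong hyperbolicity is assumed with $m=1$.

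Item (1), cone preservation, is \cite[Lemma 2.4]{Wilczak2010}. The local derivative $D\floc(p) = A_Y^{-1} Df A_X$ satisfies $D\floc^T Q D\floc - Q > 0$. Hence if $Q(v) \ge 0$ and $v \neq 0$, then $Q(D\floc v) > Q(v) \ge 0$, so $D\floc v$ lies in the interior of $\Cone^u$. Symmetrically, if $w = D\floc v$ satisfies $Q(w) \le 0$, then $Q(v) < Q(w) \le 0$, which gives the statement for $D\floc^{-1}$ and $\Cone^s$.

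For item (2) I would follow \cite[Theorem 7]{Zglic2009}, as suggested in \Cref{rem:covRel-implies-weakCov}. Because the charts are affine and the boxes are convex, the mean value theorem turns the pointwise positivity into a manifold-level cone condition: for $p, q \in \cube$ with $\floc(p), \floc(q) \in \cube$,
\[
Q(\floc(p) - \floc(q)) > Q(p - q) \quad \text{whenever } p \neq q .
\]
Equivalently, $\floc(p) - \floc(q) = \int_0^1 D\floc(q + t(p-q))\,dt\,(p-q)$, and positive definiteness is preserved under this averaging. This averaging is exactly \cite[Lemma 30]{CKOZ-2025}.

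Now let $\al$ be a u-plaque of $X$, i.e.\ the graph of $\rho: [-1,1]^u \to [-1,1]^s$ tangent to $\Cone^u$. Since the cone is constant, $\al$ satisfies $Q(a - b) \ge 0$ for $a, b \in \al$.
\begin{itemize}
\item The covering relation (\Cref{def:covering}) and a degree argument along the homotopy $H$ show that $\pi_u \circ \floc$, restricted to the part of $\al$ mapped into $\cube$, hits every point of $[-1,1]^u$. This part lies in the interior of $\al$, because $H(t, \cube^u) \cap \cube = \varnothing$.
\item The cone inequality makes $\pi_u \circ \floc$ injective on that set. If two image points had the same $u$-coordinate, their difference $w$ would satisfy $Q(w) \le 0$, while $Q(w) > Q(a-b) \ge 0$, a contradiction.
\end{itemize}
So the image is a graph over $[-1,1]^u$. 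It stays in $\cube$ because the homotopy avoids $\cube^s$, and it is tangent to $\Cone^u$ by item (1). The subplaque $\bt$ is the corresponding preimage.

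Uniqueness in item (2) needs a separate argument. Any subplaque of $\al$ whose image is a u-plaque of $Y$ must consist of exactly the points of $\al$ mapped into $\cube$, since injectivity leaves no room for another graph, so $\bt$ is unique.

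Item (3) is obtained by the same argument applied to $f^{-1}$ with $-Q$. Positive definiteness of $D\floc^T Q D\floc - Q$ implies that $D\floc^{-T}(-Q)D\floc^{-1} - (-Q)$ is positive definite. The covering relation for $f$ gives the backward covering for the transposed h-sets, with $u$ and $s$ exchanged, by the standard duality in \cite{Zglic2009}.

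The main obstacle I expect is the topological step: making the degree argument rigorous when $\floc$ is defined only on part of $\cube$, and deriving the backward covering relation from the forward one.
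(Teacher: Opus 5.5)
Your outline matches the paper's. You get cone invariance from \cite[Lemma 2.4]{Wilczak2010}, a manifold-level inequality $Q(\floc(x_1)-\floc(x_2))>Q(x_1-x_2)$, the image plaque as in \cite[Theorem 7]{Zglic2009}, and uniqueness from that inequality. The gap is in how you derive the inequality. The set $\{A : A^TQA-Q\succ 0\}$ is not convex, so positive definiteness is \emph{not} preserved when $D\floc$ is replaced by $\int_0^1 D\floc(q+t(p-q))\,dt$. For $u=s=1$, $\operatorname{diag}(2,\tfrac12)$ and $\operatorname{diag}(-2,\tfrac12)$ both qualify, while their average $\operatorname{diag}(0,\tfrac12)$ does not.

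For $u=2$, $s=1$ it also fails for genuinely injective local maps. Write the $u$-coordinates as $z\in\mathbb{C}$ and let $\floc(z,w)=(g(z),w/2)$ with $g(z)=\frac{\lambda}{ik}(e^{ikz}-\cos k)$, $k=\pi-0.1$, $\lambda=25$. This map is injective on $\cube$ (since $2k<2\pi$) and satisfies $|g'|\ge\lambda e^{-k}>1.19$, so $D\floc^TQD\floc-Q$ is positive definite at every point of $\cube$. But $\floc(\pm1,0,0)=(\pm\lambda\sin k/k,0,0)\approx(\pm0.82,0,0)\in\cube$, hence $Q(\floc(x_1)-\floc(x_2))\approx 2.69<4=Q(x_1-x_2)$. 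This map violates the covering relation, so it does not refute the lemma. It does refute deriving the inequality from the pointwise hypothesis of \Cref{def:strongHyp}, which your injectivity and uniqueness steps rely on.

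The mean-value argument needs positivity for every matrix in the convex hull of $\{D\floc(p):p\in\cube\}$, e.g.\ an interval enclosure $[D\floc(\cube)]$, which is what the rigorous computation certifies. The paper makes the same passage by citing \cite[Lemma 8]{Zglic2009}. With the hypothesis read in that convex-hull form, your proof coincides with the paper's.

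Alternatively, you can drop the manifold-level inequality and let your degree step do all the work; this also resolves the obstacle you flagged. Take a u-plaque $\al=\graph\rho$ and $y\in[-1,1]^u$, and set $\Psi_t(x,w)=H(t,(x,\rho(x)))-(y,w)$ on $[-1,1]^u\times[-1,1]^s$.

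\begin{enumerate}
\item $\Psi_t$ has no zeros on the boundary, because $H(t,\cube^u)\cap\cube=\varnothing$ and $H(t,\cube)\cap\cube^s=\varnothing$. So its degree equals that of $(x,w)\mapsto(Ax-y,-w)$, with $A$ the linear map of \Cref{def:covering}, which is $\pm1$.
\item At $t=0$, the index of a zero is the sign of $(-1)^s\det J(x)$, where $J(x)$ is the Jacobian of $x\mapsto\pi_u\floc(x,\rho(x))$. By item (1), $J(x)$ is invertible for every $x$. Its sign is therefore constant on the connected set $[-1,1]^u$, so there is exactly one zero.
\item Hence $\al\cap\floc\inv(\cube)$ lies in the interior of $\al$ and maps bijectively onto a graph over $[-1,1]^u$ tangent to $\Cone^u$. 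Any admissible subplaque must be this set. This gives existence and uniqueness in item (2) from the pointwise hypothesis alone.
\end{enumerate}

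Item (3) then follows without any backward covering relation. Each horizontal plaque $[-1,1]^u\times\{w\}$ is mapped onto a u-plaque of $Y$ tangent to the interior of $\Cone^u$. Such a plaque meets a given s-plaque of $Y$ exactly once, by a contraction argument. So the preimage of that s-plaque in $\cube$ is a graph over $[-1,1]^s$, tangent to $\Cone^s$ by item (1). Its image lies in the interior of the s-plaque because $\floc(\cube)\cap\cube^s=\varnothing$.
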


\begin{proof}
    We regard $X$ and $Y$ as strong dynamical boxes (\Cref{def:strongDynBox}) with cones given by \Cref{eq:quadCones}. Let $F_p = A_Y Df(p) A_X^{-1}$ denote the local derivative for a point $p \in X$, where $\cube = [-1,1]^{u+s}$, $\varphi_X: \cube \to X$ and $\varphi_Y: \cube \to Y$ are affine coordinates for $X$ and $Y$, respectively, with $\varphi_X(q) = A_X(q) + p_X$ and $\varphi_Y(q) = A_Y(q) + p_Y$.
    
    We have by strong hyperbolicity that $F_p^T Q F_p - Q$ is positive definite for all $p \in X$. Then the cone preservation in the definition of strong covering relation follows from \cite[Lemma 2.4]{Wilczak2010}.

    By \cite[Lemma 8]{Zglic2009}, the strong hyperbolicity also implies that
    \begin{equation}
        \label{eq:zglic-cone}
        Q(\floc(x_1) - \floc(x_2)) > Q(x_1 - x_2)
        \quad \text{for all } x_1 \neq x_2 \in R,
    \end{equation}
    which is the cone condition in the sense of \cite{Zglic2009}.
    
    Thus, by \cite[Theorem 7]{Zglic2009}, for every u-plaque $\al$ of $X$ there is a sub-plaque $\bt$ with image a u-plaque of $Y$.

    Note as well that, if $\al$ is a u-plaque for $X$ with $\Cone^u = \{v \in \bbR^n \, : \, Q(v) \geq 0\}$, then $\al$ is the graph in local coordinates of a Lipschitz map with Lipschitz constant less than or equal to $1$. In particular, two points $q_1, q_2$ in $\varphi_X \inv (\al)$ satisfy $Q(q_1 - q_2) \geq 0$ by the definition of $Q$.
    
    Then uniqueness is a consequence of \Cref{eq:zglic-cone}. Indeed, suppose by contradiction, that the subplaque $\bt$ of $\al$ is not unique. Since $f$ is a diffeomorphism, $\floc$ is injective, which means that it has two different subplaques $\bt_1$ and $\bt_2$ being mapped to two different u-plaques $\floc(\bt_1)$ and $\floc(\bt_2)$. Consider $p_1 = (p^u, p_1^s) \in \floc(\bt_1)$ and $p_2 = (p^u, p_2^s) \in \floc(\bt_2)$ two points with the same first coordinate in $[-1, 1]^u$. By the definition of $Q$, $Q(p_1 - p_2) < 0$. Consider $q_1 = \floc \inv(p_1)$ and $q_2 = \floc \inv(p_2)$ the pre images of $p_1$ and $p_2$, which are points in $\al$. \Cref{eq:zglic-cone} implies that $Q(q_1 - q_2) < Q(p_1 - p_2) < 0$, which contradicts the fact that $\al$ is a u-plaque.

    The argument for s-plaques is analogous.
\end{proof}

\begin{remark}
    \label{rem:quotient-problem}
    The previous results in the literature, in particular from \cite{Wilczak2010} and \cite{Zglic2009}, that we use in the above proof, are for $\bbR^n$, not for general manifolds. This implies that they hold only locally for $M$, meaning that $X$ and $Y$ should be sufficiently small. For the torus case we deal with in this work, small enough means that $f(X)$ should be contained in a fundamental domain of the torus, or equivalently, that all coordinates of $f(X)$ should have diameter less than $1$.
\end{remark}

For the particular family we apply our results to, we want to find an invariant, hyperbolic and transitive subset of $\bbT^3$ that has expanding center direction for $f$. To look for that, we first look for a region in which the central direction is expanded under $Df$, which can be checked numerically using the splitting given in \Cref{sec:coneBox}. We also take this region to contain the fixed point $q_0$, that has its center direction being expanded. The \texttt{verifyHorseshoe} routine establishes the uniformly hyperbolic transitive set $\Lambda$ constructed using the blender region $V = V_x \ti S^1 \ti V_z$ and \Cref{prop:lam}.

We fix $V_x = [0.38, 0.48]$ and $V_z$ slightly larger, as discussed in \Cref{sec:blender-crit}. The region $V$ is divided into approximately cubic seed boxes using the function \texttt{buildHorseBoxes}.
The $x$ and $z$ directions are each split into $N_{\mathrm{hyp}} = 40$ equal subintervals, and the $y$
direction is split into $\lfloor N_{\mathrm{hyp}} / |V_x| \rfloor = 400$ subintervals to make the boxes roughly cubic. We then keep only the boxes $\bm{s_i}$ intersecting $V$ after applying $f$ and $f^{-1}$, see \Cref{fig:Utilde}. The hyperbolic set $\Lam = h(\Sig_{T})$ is given by the symbolic dynamics over the remaining boxes by \Cref{prop:lam}.

As we mentioned in \Cref{sec:hyp-trans}, we use the axis-aligned boxes $\bm{s_i}$ to define the symbolic dynamics and larger dynamical boxes $B_i$ to check for the covering relation (see \Cref{sec:checkCov}). Then for each remaining axis-aligned box $\bm{s_i}$, a cu-dynamical box (cone box with $u = 2$, $s = 1$) having $\bm{s_i}$ as a seed is constructed using \texttt{buildSeedBox} with scale $0.02$ and the \texttt{standardCone}. The scale is chosen to ensure that, every time the image of a seed box $\bm{s_i}$ meets another box $\bm{s_j}$, the corresponding dynamical boxes satisfy $B_i \fcovers B_j$, see \Cref{fig:bb}.

\begin{figure}[ht]
    \centering
    \begin{subfigure}{0.48\textwidth}
        \includegraphics[width=1.1\linewidth]{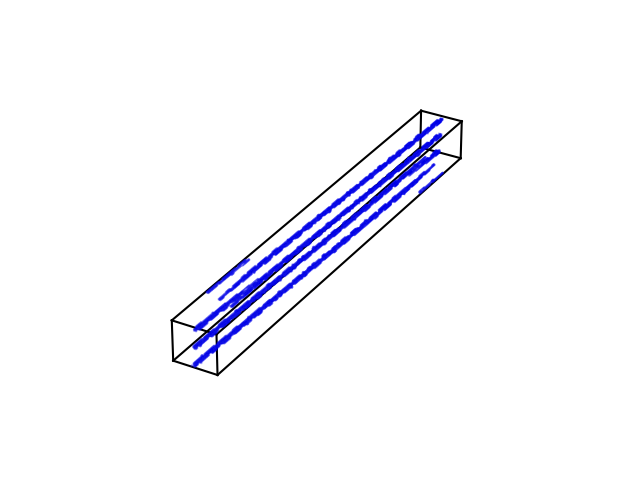}
        \caption{We plot (in blue) the boxes $\bm{s_i}$ remaining in $V$ after applying $f$ and $f^{-1}$.}
        \label{fig:Utilde}
    \end{subfigure}
    \hfill
    \begin{subfigure}{0.48\textwidth}
        \includegraphics[width=1.1\linewidth]{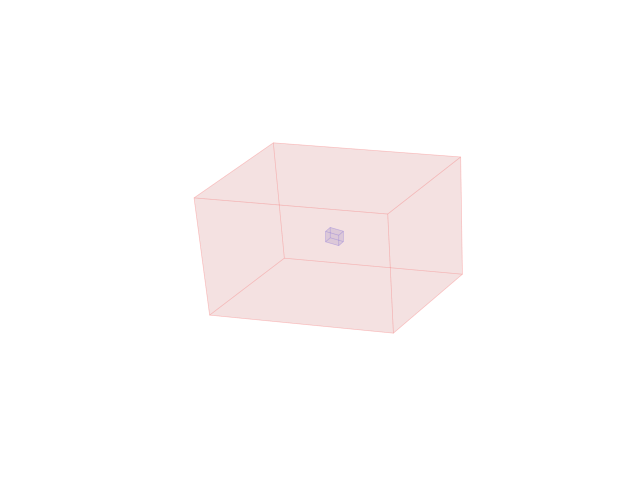}
        \caption{The scale of the bigger dynamical boxes (in red).}
        \label{fig:bb}
    \end{subfigure}
    \caption{Boxes used to prove the existence of a transitive hyperbolic set.}
    \label{fig:lambda}
\end{figure}

We then define the region $U = \cup_{i \in \mathcal{I}} B_i$ as the one given by the dynamical boxes $B_i$ covering each of the boxes $\bm{s_i}$. Applying \Cref{teo:strongHyp} to $U$ gives us that any invariant set contained in $U$ is uniformly hyperbolic.

Using interval arithmetic, we test if $f(\bm{s_i})$ intersects $\bm{s_j}$ as subsets of $\bbT^3$. If the computer fails to discard the possibility that $f(\bm{s_i}) \cap \bm{s_j} = \varnothing$, we verify if $B_i \fcovers B_j$. To do so, we use \Cref{lem:covRel-quadCone-imply-strongCov} (implemented in the function \texttt{strongCovering} with \Cref{lem:checkCov} and \Cref{eq:def-pos}). Then we add an edge from $\bm{s_i}$ to $\bm{s_j}$ to the directed graph $G$ representing the symbolic dynamics of $f$ restricted to $V$. If the covering relation fails, we stop the program and adjust the parameters, in particular the scale of the dynamical box.

If the graph is strongly connected, or if it only has one \textit{non-trivial connected component}, then the corresponding invariant set is transitive by \Cref{prop:lam}. In our setting, a \textit{trivial connected component} is formed by one vertex that does not contain self-loops and is not connected for the past and for future to other vertices. Such a vertex cannot be part of a bi-infinite sequence in the symbolic dynamics, and so can safely be discarded.

Describing the implementation more precisely, for each pair $(B_i, B_j)$, the \texttt{strongCovering} routine checks:
\begin{itemize}
    \item \texttt{checkIntersection}: if $$\floc(\varphi_i^{-1}(\bm{s_i})) \cap \varphi_j^{-1}(\bm{s_j}) \neq \varnothing,$$
    where $\floc = \varphi_j^{-1} \circ f \circ \varphi_i$. This is a more refined check for intersection of the image of the seed $\bm{s_i}$ with the seed $\bm{s_j}$ in local coordinates. If this fails, there is nothing to be checked.
    \item \texttt{surfaceCovering}: the strong covering relation with $u = 2$ as described in \Cref{sec:checkCov}.
    \item \texttt{quadConeTest}: the matrix 
    $$[A_j Df(p) A_i^{-1}]^T Q [A_j Df(p) A_i^{-1}] - Q$$ 
    is positive definite, where $Q = \operatorname{diag}\{1,1,-1\}$. This implies cone preservation in the sense of \Cref{def:strongHyp} and, by \Cref{teo:strongHyp}, uniform hyperbolicity of $\Lambda$.
\end{itemize}

The \texttt{buildIncidenceMatrix} routine loops over all pairs $(B_i, B_j)$. If the test \texttt{strongCovering} returns true, an edge $i \to j$ is added to the directed graph $G$.
Tarjan's algorithm is then applied to $G$ to verify that it is strongly connected, which by \Cref{prop:lam} implies that $\Lam = h(\Sig_{T})$ is transitive and that if a point $p$ has its forward orbit in $V$, then $p \in W^s(\Lam)$. Also, by construction, if $p$ has its full orbit in $V$, then $p \in \Lam$, and $\Lam$ is an invariant set contained in $U = \cup_{i \in \mathcal{I}} B_i$, so it is uniformly hyperbolic.

\subsection{Blender}
\label{sec:part2}

The main function of this part is \texttt{verifyActivation}. It builds the blender with the \texttt{buildBlender} routine as a union of ``bunches'', and the routine \texttt{blenderInvariant} looks, for each bunch in the blender, if there is a good ``branch'' that makes it invariant.

The bunches represent a family of curves that form a robust covering collection (\Cref{def:rcc}). Their construction in the
\texttt{buildBlender} routine is detailed in the following.

Remember we denote the region
containing the blender as $V = V_x \ti V_y \ti V_z \subof \bbR^3.$
Define $q_x$ as the midpoint of $V_x$
and define a set $P = q_x \ti V_y \ti V_z.$
Define an interval $J$ centered at $0$ such that $q_x + J = V_x.$

\begin{definition}
    \label{def:ubunch}
    
    Given a vector $u = (1, u_y, u_z)$ and $\delta > 0,$ we define a cone
    $\Cone(u, \delta) \subof \bbR^3$
    as all rescalings of vectors in
    \[
        m = m(u, \delta) :=
        \{1\} \ti (u_y + [-\delta, \delta]) \ti (u_z + [-\delta, \delta]).
    \]
    That is, $v \in \Cone(u, \delta)$ if $v = s \cdot w$
    for some $s \in \bbR$ and $w \in m.$
    
    With this cone, given a rectangle of the form $r = q_x \ti r_y \ti r_z,$
    we define a \emph{bunch} $\Fcal(r, u, \delta)$
    as the set of all $C^1$ curves $\gam : J \to \bbR^3$
    of the form
    \[
        \gam(t) = (q_x + t, \gam_y(t), \gam_z(t))
    \]
    where $\gam(0) \in r$ and $\gam'(t) \in \Cone(u, \delta)$
    for all $t \in J.$
\end{definition}

\begin{figure}[ht]
    \def\svgwidth{6cm}
    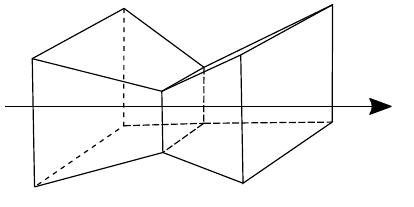
   \centering
   \caption{Each bunch in $\mathcal{F}$ is given by a rectangle, a direction $v^u$ and an ``opening'' $\delta$ that we allow along this direction.}
   \label{fig:ucurve}
\end{figure}

Note that, for a subinterval $J_0 \subof J$,
the set $\{ \gam(t) : \gam \in \Fcal(r, u, \delta), t \in J_0 \}$
is contained in the axis-aligned box
$$r + J_0 \cdot m
    = \{ p + t \cdot v \; : \; p \in r, \; t \in J_0, \; v \in m \}$$
where $m$ is defined as above.
In particular, every curve of the bunch has its image in $r + J \cdot m.$

We represent the blender as a finite union
of such bunches $\Fcal = \bigcup_i \Fcal_i.$
For this, we partition the set $P = q_x \ti V_y \ti V_z$
into a union of rectangles $r_i = q_x \ti V_{y,i} \ti V_{z,i}.$
For each rectangle, we calculate an approximation
$u_i = (1, u_{y,i}, u_{z,i})$ to the unstable direction $\Eu$
of the partially hyperbolic splitting at the midpoint of $r_i,$
and $\delta > 0$ is a fixed constant independent of $i.$
Then the bunch associated to $r_i$ is $\Fcal_i = \Fcal(r_i, u_i, \delta)$,
obtained by the \texttt{buildBunch} routine. 

The robust covering collection is defined by the bunches.
To verify the hypotheses of \Cref{prop:blender} in the \texttt{blenderInvariant} function,
for each bunch $\Fcal_i$ in the robust covering collection, our program checks that every curve in it has a subcurve whose image is in the robust covering collection.
To do that, the code computes all valid \emph{branches}: a list of integers $n_x$ such that, for each $n_x$, there is a restriction to a subinterval $J_i \subof J$ such that $\pi_x(f \circ \gam|_{J_i})$ crosses $V_x + n_x$ for every curve $\gam \in \Fcal_i$.

A branch is \emph{good} if the derivative maps the cone of the branch into the cone of every bunch whose rectangle intersects the image of the branch at the $q_x$-plane. Note that if a curve contained in $V$ crosses it from the left face to the right face, as described below, then it intersects at least one rectangle $r_i$, so the check for compatible cones is not void.
The bunch $\Fcal_i$ passes the invariance test if it has at least one good branch.

During the computation of valid branches, we need to check that the image of the bunch under an integer translation intersects the set $V$ in an appropriate way to allow for invariance. This is done using the \texttt{crossesV} function, that we explain as follows. For convenience, let $f_x, f_y, f_z : \bbR^3 \to \bbR$ denote the coordinates of the composition of the lifted map $f : \bbR^3 \to \bbR^3$
with the translation by $-k \in \bbZ^3$.
That is,
\[
    (f_x(p), f_y(p), f_z(p)) = f(p) - k.
\]

To check that a branch is valid, we write the interval $J_i$ as $J_i = [a_i, b_i],$ and recall that
$\gam(t) \in r_i + t m$ for all $\gam \in \Fcal_i$, $t \in J$ and $m$ defined as above.
The program computes enclosures of both
$f_x(r_i + a_i \cdot m)$ and $f_x(r_i + b_i \cdot m)$
and verifies that these lie in two distinct connected components of $\bbR \sans V_x$ in the function \texttt{crossesV}.
This guarantees by the Intermediate Value Theorem that
\[
    V_x \subof f_x(\gam(J_i))
\]
for any $\gam \in \Fcal_i.$
We also verify at this stage that $f_x(r_i + J_i \cdot m)$ does not intersect
any other integer translates of $V_x$ and that
\[
    f_z(r_i + J_i \cdot m) \subof V_z.
\]

For our example system in $\bbT^3$, we have $V_y = [0, 1]$, so there is no need to verify $f_y(r_i + J_i \cdot m) \subof V_y$ in the \texttt{crossesV} function.

Once we verify that the branch is valid, we next determine which rectangles
$r_j$ can intersect a curve of the form $f \circ \gam|_{J_i}$, with $\gam \in \Fcal_i$, with the \texttt{intersect} routine.
Since all of the rectangles $r_j$ are of the form
$r_j = q_x \ti r_{y,j} \ti r_{z,j},$
we first compute a small interval $T_i$ with the property that
$t \in J_i$ and $f_x \circ \gam(t) = q_x$
together imply that
$t \in T_i.$

We then compute an enclosure of the set
\[
    f \circ \gam(T_i) \subof f(r_i + T_i \cdot m_i).
\]
If $r_j$ is disjoint from $f(r_i + T_i m_i),$
then no curve of the form $f \circ \gam|_{J_i}$
with $\gam \in \Fcal_i$ belongs to $\Fcal_j$.
For those $r_j$ where an intersection is possible,
we verify that $Df$ maps the unstable cone $\Cone^u_i$
inside the unstable cone $\Cone^u_j$ using the
function \texttt{compatibleCones}. 

We verify all the conditions in the function \texttt{blenderInvariant} using strict inequalities, and so if a bunch is good for $f$, it is also good for any diffeomorphism $g: \bbT^3 \to \bbT^3$ which is $C^1$ close to $f$.

This finishes the verification that the family of curves $\Fcal$ represented by the bunches forms a robust covering collection. Together with the hyperbolic set $\Lam$ from \Cref{sec:part1}, we have a blender $(\Lam, \Fcal)$.

\subsubsection{Activation}
\label{sec:activation}

To apply \Cref{lemma:activate} we need to find a strong dynamical box $B_0$ containing the fixed point and a sequence of weak dynamical boxes connecting $B_0$ to the robust covering collection $\mathcal{F}$ by weak covering relations.

To check that $B_0$ contains the fixed point, we check that it strongly covers itself by applying the \texttt{strongCovering} test from \Cref{sec:part1} using $f^{-1}$ in place of $f$ and reordering the coordinate axes, reducing the problem to a $u = 2$ covering for the inverse.

Then we build, with the function \texttt{findActivationOrbit}, a sequence of boxes $B_1, \dots, B_l$, and we verify that this sequence satisfies the hypothesis of \Cref{lemma:activate} (with \texttt{confirmActivationOrbit}).

To get this sequence, we sample $1000$ points approximately in the local unstable manifold of $p_0 = (0, 0, 0)$, and for each one of these points we build a dynamical box $B_1$.

Since the system is partially hyperbolic, in local coordinates, the $x$ direction is always expanding, the $z$ direction is always contracting and the center direction given by $y$ can have any contracting or expanding rate in between. Thus, when we check for the weak covering relation with $u = 1$ and $s = 2$ (see \Cref{def:weakCov}) in the \texttt{curveCovering} function, described at the end of \Cref{sec:checkCov}, the center contraction may fail to hold.

To guarantee that $\pi_y(\floc(\bm{E})) \subset [-1,1]$ holds, where $\bm{E}$ is the subcube we use in the \texttt{curveCovering} function, we build $B_1$ with the function \texttt{fitBox}. This function multiplies the center direction of the dynamical box by a constant $c$ as small as possible such that the inequalities related to the center direction in \texttt{curveCovering} hold, and creates a cone as narrow as possible that contains the image of the cone of the first box.

A weak covering relation between the two cone boxes is then confirmed by calling \texttt{curveCovering} together with a cone inclusion test (checking that $D\floc \Cone_0 \subof \Cone_1$), see \Cref{rem:covRel-implies-weakCov}.

We then build a dynamical box $B_2$ around the image of the middle point of $B_1$. By iterating this process and stopping when the middle point of $B_l$ enters $V$ (or when we reach the maximum number of iterates set as $8$ in the function \texttt{orbitToV}), we have a list of dynamical boxes $B_0, \cdots, B_l$ satisfying
$$B_0 \fcovers B_0 \fweakcovers B_1 \fweakcovers \cdots \fweakcovers B_l,$$
where $l < 8$. The function \texttt{findActivationOrbit} returns this sequence if it activates the blender (by \Cref{lemma:activate}), which is confirmed by the function \texttt{confirmActivationOrbit} by checking that $B_0 \fcovers B_0$ and $B_0 \fweakcovers B_1 \fweakcovers \cdots \fweakcovers B_l$ as explained above, and that $B_l \fweakcovers \Fcal$ with the \texttt{activatesBlender} routine.

The function \texttt{activatesBlender} checks that the image of $B_l$ under $f$ crosses $V$ (the $z$-coordinate stays in $V_z$, the $x$-coordinate crosses $V_x + n_x$ for some integer $n_x$), and for every bunch $\Fcal_i$ whose rectangle intersects the image of $B_l$ at the $q_x$-plane, the derivative maps the cone of $B_l$ into the cone of $\Fcal_i$, checked with \texttt{coneInclusion}.

\subsection{Partial hyperbolicity}
\label{sec:part3}

We can apply \Cref{prop:1ph-proof} for the particular case where the atlas is composed by dynamical boxes $B_i$, each one equipped with constant cone fields, as stated in \Cref{prop:weakph}.

Partial hyperbolicity is verified as part of \texttt{verifyWeak} , which is called twice: once for the forward dynamics (\texttt{verifyUnstable}) and once for the backward dynamics (\texttt{verifyStable}), see also \Cref{sec:part4}.

We describe the test for weak partial hyperbolicity for the forward dynamics, that results in the splitting $E^{cs} \oplus E^u$, with the splitting $E^s \oplus E^{cu}$ being obtained analogously for the backward dynamics.

We first split the torus $\mathbb{T}^3$ into boxes $\bm{s_i}$ that we call \textit{seeds}, each one given by a triple of intervals, and then we cover each one of these seeds with dynamical boxes $B_i$ given by \texttt{buildSeedBoxes} (see \Cref{sec:coneBox}). Unlike in \Cref{sec:part1} where the ``slope'' $a$ for the cone is irrelevant, since we test for their invariance using the quadratic form, here the cones are the ones given by \Cref{eq:cones}, narrower and defined by the slope $a \in (0, 1]$. 

For all $B_i$, we consider the constant cone field in $\cube$ given by $\Cone^u(a)$ (see \Cref{eq:cones}). To apply \Cref{prop:weakph} we only need the cones in local coordinates, and for implementation it is convenient to have the same cones for every cone box.

For every $i, j \in \{0, \; \cdots, \;N-1\}$, where $N$ is the number of boxes covering $\mathbb{T}^3$, we check if the interval enclosure $[f(\bm{s_i})]$ intersects $\bm{s_j}$. If that fails to happen, then there is nothing to be checked. If $[ f(\bm{s_i}) ] \cap \bm{s_j} \neq \varnothing$, we check the following:
\begin{enumerate}    
    \item \emph{u-invariance}: $[D\floc(\cube)] (\Cone^u(a)) \subseteq \Cone^u(a)$;
    \item \emph{u-expansion}: $[\Vert [D\floc(\cube)] (\Cone^u(a)) \Vert] > [\Vert \Cone^u(a) \Vert]$,
\end{enumerate}
where $\floc = \varphi_j^{-1} \circ f \circ \varphi_i$.

We can still have ``false positives'' for the intersection test, but that only means that we perform more verifications of partial hyperbolicity than necessary.

The algorithm to test the conditions of \Cref{prop:weakph} includes the construction of the family $\{B_i\}_{i}$, and the verification of the conditions above in the function \texttt{verifyWeakPH}. 

Observe that $N = n^3$, where $n = 40$ for the forward map $f$ or $n = 60$ for the inverse $f^{-1}$, and there are $n^6$ pairs to be tested, since we have to verify if every pair of boxes intersects. If they do, we have to check expansion and invariance. Thus, this is the most time-consuming step of the program. Some pairs can be avoided by using the fact that the seeds are constructed by the function \texttt{buildSeedBoxes} by looping over the $x$-coordinate first, and then over the $y$-coordinate. So, fixed $i$, if image $f(\bm{s_i})$ does not intersect the $x$-coordinate of the first box with that interval, we can skip $n^2 - 1$ verifications with all $\bm{s_j}$ with that interval in the first coordinate. We do the same for the $y$-coordinate. See the function \texttt{verifyWeakPH} in the code for the details.

\subsection{Dense stable and unstable sets}
\label{sec:part4}

We explain the implementation of \Cref{prop:densestable} for $f$ in \texttt{verifyUnstable}, with the implementation for $f\inv$ in \texttt{verifyStable} being analogous, both using the \texttt{verifyWeak} function.

We first build two dynamical boxes around the fixed point $p_0 = (0, 0 ,0)$, first a \textit{huge box} $H_0$ that serves as a target for all points in the manifold, and a \textit{tiny box} $T_0$ that is used to confirm the hyperbolicity of the fixed point $p_0$.

We use the function \texttt{shrink} to find a sequence of weak dynamical boxes $X_1, \cdots, X_{m}$ satisfying
$$H_0 = X_1 \fweakcovers X_2 \fweakcovers \cdots \fweakcovers X_m = T_0.$$

The \texttt{hyperbolicFixedPoint} test then confirms that $T_0 \fcovers T_0$, so the tiny box contains a unique hyperbolic fixed point with unstable index equals $1$.

For the backward dynamics (\texttt{verifyStable}), the code additionally checks that the tiny box at $q_0$ is contained in the blender region $V_x \ti S^1 \ti V_z$, confirming $q_0 \in \Lambda$.

Then, to apply \Cref{prop:densestable}, it only remains to cover $M$ with seed boxes that are good seeds for dynamical boxes $\{B_i\}$ satisfying $B_i \fweakcovers H_0$ for all $i$. 

We use the same boxes and cones constructed in \Cref{sec:part3} to check weak partial hyperbolicity. We check that each $\bm{s_i}$ is a good seed for $B_i$ with the function \texttt{allCurvesAreGood}. It verifies that every $C^1$ curve that passes through the seed and is tangent to the cone exits the cone box through the left and right faces (the u-boundary), see the definition of good seed in \Cref{sec:activation-crit}.

Finally, we check that $B_i \fweakcovers H_0$ for all $i$ with the routine \texttt{allBoxesCoverTarget}. This implies that $\hypW^s(p_0)$ is dense in $\bbT^3$.

\appendix \section{Symbolic proof of transitivity}
\label{appa}

We give a proof of the symbolic coding given by \Cref{thm:coding}, which in particular allows us to prove the existence of a transitive invariant set in \Cref{prop:lam}.

\begin{proof}[Proof of \Cref{thm:coding}]
    Given a sequence $\{a_n\} \in \Sig_T$, we construct a unique point $x \in M$ satisfying $f^n(x) \in B_{a_n}$ for all $n \in \bbZ$.

    A standard graph transform argument gives us that there is a sequence $\tilde{\rho}_n \subseteq B_{a_n}$ of u-plaques such that $\tilde{\rho}_{n+1} = f(\tilde{\rho}_n) \cap B_{a_{n+1}}$. We include its proof here.
    
    Consider the set
    $$\mathcal{L} = \left\{\rho_{\loc}: \bbZ \times [-1, 1]^u \to [-1, 1]^s \, : \, \begin{aligned}
        & \mbox{ for every } n \in \bbZ, \; \rho_{\loc}(n, \cdot) \mbox{ is }\\
        & \mbox{ Lipschitz with uniform} \\
        & \mbox{ Lipschitz constant $C > 0$} ; 
    \end{aligned}\right\}$$ 
    of sequences of Lipschitz plaques in $\cube$ with uniform Lipschitz constant $C > 0$, where $\cube = [-1, 1]^{u+s}$. In $\mathcal{L}$, consider the metric
    $$\mathcal{D}(\rho, \gam) = \sup\{\vert\rho(n, x) - \gam(n, x) \vert \, : \, n \in \bbZ, \, x \in [-1,1]^u\},$$
    that is easily verified to make $(\mathcal{L}, \mathcal{D})$ a complete metric space. We would like to define a graph transform on $\mathcal{L}$ induced by the action of $f$ on each graph, but that is not well-defined \textit{a priori}. Consider, then, its subspace
    $$\mathcal{L}_1 = \left\{\rho_{\loc}: \bbZ \times [-1, 1]^u \to [-1, 1]^s \, : \, \begin{aligned}
        &\rho_n = \varphi_{a_n}(\graph(\rho_{\loc}(n,\cdot))) \\
        &\mbox{ is a u-plaque in } B_{a_n}
    \end{aligned}\right\}$$  
    of sequences defining u-plaques in local coordinates.

    Consider now the function $\mathcal{F}: \mathcal{L}_1 \to \mathcal{L}_1$ defined by $\mathcal{F}(\rho) = \hat{\rho}$ where
    $$\graph(\hat{\rho}(n+1, \cdot)) = f_n\left(\graph(\rho(n, \cdot))\right) \cap R,$$
    where $f_n = \varphi_{a_{n+1}}^{-1} \circ f \circ \varphi_{a_n}$ is the local map. This is well-defined by the definition of strong covering, which guarantees that $f_n\left(\graph(\rho(n, \cdot))\right) \cap \cube$ has only one connected component (since there is a unique subplaque in the interior of $\rho_n$ such that its image is a subplaque in $B_{a_n}$) that is exactly a u-plaque. Also, it follows directly from the definition of strong dynamical box that $F$ is a contraction, since $f_n$ expands uniformly along u-plaques and contracts uniformly along s-plaques, and they are transversal.

    Since $(\mathcal{L}, \mathcal{D})$ is complete, the closure $\overline{\mathcal{L}_1}$ is a complete subspace, so by the Banach fixed-point theorem, $F$ has a fixed point $\tilde{\rho} \in \overline{\mathcal{L}_1}$. This implies that the graph of $\tilde{\rho}(n, \cdot)$ is mapped under $f_n$ in $\cube$ to the graph of $\tilde{\rho}(n+1, \cdot)$. One can prove that each $\tilde{\rho}(n, \cdot)$ is $C^1$ (it is a standard argument appearing in the proof of the Hadamard--Perron Theorem, see for instance Step 5 in \cite[Theorem 6.2.8]{KH1995}), and since the fixed point is the limit of the iteration of any element of $\mathcal{L}_1$, the cone preservation of item (1) in \Cref{def:strongCov} makes each $\tilde{\rho_n} = \varphi_{a_n}(\tilde{\rho}(n,\cdot))$ a u-plaque. Thus we have the sequence of u-plaques as claimed. 
    
    Now we show that there is a unique sequence of points $x_n \in \cube$ satisfying $f_n(x_n) = x_{n+1}$ for all $n \in \bbZ$, which implies the existence of the desired $x \in M$ as $x = \varphi_{a_0}(x_0)$.

    Since $f_n^{-1}(\tilde{\rho}(n+1, \cdot))$ is a unique connected subplaque $\alpha_n$ in $\cube$ and $f$ is a diffeomorphism, $\restr{f_n}{\alpha_n}$ is a bijection for all $n \in \bbZ$. So for all $p \in \graph(\tilde{\rho}(n+1, \cdot))$ there is a unique $q \in \graph(\tilde{\rho}(n, \cdot))$ such that $f_n(q) = p$.
    
    If $\mathcal{S}$ is the space of sequences in $\cube$ such that $p_n \in \graph(\tilde{\rho}(n, \cdot))$, consider the metric $\mathcal{D}_{\rho}$ in $\mathcal{S}$ given by
    $$\mathcal{D}_{\rho}(\{p_n\}, \{r_n\}) = \sup_n d_{n}(p_n, r_n),$$
    where $d_n$ is the distance along $\graph(\tilde{\rho}(n, \cdot))$.

    The map $\mathcal{G}: \mathcal{S} \to \mathcal{S}$ that takes $\{p_n\}$ to $\{q_n\}$, given by $q_n = f_n \inv(p_{n+1})$, is well-defined and it is a contraction. So, since $(\mathcal{S}, \mathcal{D}_{\rho})$ is easily verified to be complete, $\mathcal{G}$ has a unique fixed point $\{x_n\}$ that satisfies
    \begin{equation}
        \label{eq:point-invariant}
        f_n(x_n) = x_{n+1} \mbox{ for all } n \in \bbZ.
    \end{equation}
    It remains to verify that $\{x_n\}$ is the only sequence of points in $\cube$, not necessarily a sequence in $\mathcal{S}$, that satisfies \Cref{eq:point-invariant}. If $\{p_n\}$ is any sequence of points in $\cube$, suppose that $\{p_n\}$ satisfies \Cref{eq:point-invariant}. Denoting $p_n = (p^u_n, p^s_n)$, we can consider $\mathcal{L}_2 \subseteq \mathcal{L}_1$ given by  
    $$\mathcal{L}_2 = \left\{\rho \in \mathcal{L}_1: \rho(n, p^u_n) = p^s_n \right\}.$$
    We have that $\mathcal{L}_2$ is $\mathcal{D}$-closed, non-empty and $\mathcal{F}$-invariant. So the fixed point $\tilde{\rho}$ of $\mathcal{F}$ should be in $\mathcal{L}_2$. Thus, $\{p_n\}$ is in $\mathcal{S}$ and, by the above discussion about $\mathcal{G}$, it is unique.

    Hence the map $h : \Sig_T \to M$, $\{a_n\} \mapsto x$, is well-defined, and $h \circ \sig = f \circ h$ follows immediately from the definition. The image $h(\Sig_T)$ is uniformly hyperbolic because the strong covering relations together over finitely many boxes provide a uniformly expanding and $Df$-invariant cone field $\Cone^u$ and a uniformly contracting and $Df^{-1}$-invariant cone field $\Cone^s$ on $h(\Sig_T)$.

    Finally, the characterizations in the statement hold as
    $$\tilde{\rho}_0 = \mbox{ connected component through $x$ of } \hypW^u(x) \cap B_{a_0}.$$
    Indeed, if $p \in \varphi_{a_0}(\graph(\tilde{\rho}(0, \cdot)))$, then the fact that $\tilde{\rho}$ is a fixed point for $\mathcal{F}$ and each $\tilde{\rho}_n$ is a u-plaque implies that, for all $n \in \bbN$, the set $\bt_n = f^{-n}(\tilde{\rho}_0) \subseteq \tilde{\rho}_{-n}$ is such that $f^{-n}(p) \in \bt_n$, and that the diameter of $\bt_n$ converges to $0$ as $n$ goes to $\infty$. But $f^{-n}(x) \in \bt_n$ by the construction of $x$, so both $f^{-n}(x)$ and $f^{-n}(p)$ belong to $\bt_n$, which implies that $p$ belongs to $\hypW^u(x)$.

    For the reciprocal inclusion, take $p$ in the connected component through $x$ of $\hypW^u(x) \cap B_{a_0}$. Consider, for all $n \in \bbN$, $p_n$ as the $n^{\text{th}}$ pre-image of $p$ in local coordinates: $p_n = \varphi^{-1}_{a_n}(f^{-n}(p)) = (p^u_n, p^s_n)$. Consider an element $\rho$ of $\mathcal{L}_1$ such that, for all $n \in \bbN$, it satisfies $\rho(-n,\cdot) \equiv p^s_n$, meaning that for all negative indices it is a flat u-plaque passing through $p_n$. Since $\tilde{\rho}$ is a fixed point for $\mathcal{F}$, we have that $D(\mathcal{F}^k(\rho), \tilde{\rho})$ converges to $0$ as $k$ goes to $\infty$. But $\varphi^{-1}_{a_0}(p) \in \graph(\mathcal{F}^k(\rho(-k, \cdot)))$ by construction, so $\varphi^{-1}_{a_0}(p) \in \graph(\tilde{\rho}(0, \cdot))$, as desired. The characterization for the stable manifold is analogous.
\end{proof}

\bibliographystyle{amsalpha}
\bibliography{references}
\end{document}